\documentclass{amsart}

\RequirePackage{color}

\def\smallskip{\vskip\smallskipamount}
\def\medskip{\vskip\medskipamount}
\def\bigskip{\vskip\bigskipamount}

\usepackage{amsmath,amsthm,bm,mathrsfs,amscd,tikz}
\usepackage{ytableau}
\usepackage{comment}
\usepackage{mathdots}
\usepackage[all]{xy}
\usepackage{graphicx}
\usepackage[colorinlistoftodos]{todonotes}
\usepackage{enumerate}
\usepackage{feynmp-auto}
\usepackage[english]{babel}
\usepackage[utf8]{inputenc}
\usepackage{float}
\usepackage{tikz}
\usepackage{tikz-cd}
\usepackage{amssymb}
\usepackage{mathtools}
\usepackage{soul}
\usetikzlibrary{decorations.pathmorphing}
\usepackage{url}
\usepackage[colorlinks=false, linktocpage=true]{hyperref}

\usepackage[utf8]{inputenc}

\newcommand{\nocontentsline}[3]{}
\let\origcontentsline\addcontentsline
\newcommand\stoptoc{\let\addcontentsline\nocontentsline}
\newcommand\resumetoc{\let\addcontentsline\origcontentsline}

\newtheoremstyle{thmstyle}{}{}{\itshape}{}{\bfseries}{ }{5pt}{}
\newtheoremstyle{exstyle}{}{}{}{}{\bfseries}{ }{5pt}{}
\newtheoremstyle{defstyle}{}{}{}{}{\bfseries}{ }{5pt}{}
\newtheoremstyle{remstyle}{}{}{}{}{\bfseries}{ }{5pt}{}

\theoremstyle{thmstyle}
\newtheorem{thm}{Theorem}[section]
\newtheorem{theorem}[thm]{Theorem}

\newtheorem{lemma}[thm]{Lemma}

\newtheorem{proposition}[thm]{Proposition}

\newtheorem{corollary}[thm]{Corollary}

\theoremstyle{exstyle}

\theoremstyle{defstyle}

\newtheorem{definition}[thm]{Definition}

\newtheorem{def-prop}[thm]{Definition-Proposition}
\newtheorem{def-lem}[thm]{Definition-Lemma}
\newtheorem{rem-convention}[thm]{Remark-Convention}
\newtheorem{def-note}[thm]{Definition-Notation}

\theoremstyle{remstyle}

\newtheorem{remark}[thm]{Remark}

\theoremstyle{remstyle}

\newcommand{\Hom}{\operatorname{Hom}}
\newcommand{\Fac}{\operatorname{Fac}}

\DeclareMathOperator*{\rad}{rad}

\DeclareMathOperator*{\modu}{mod}

\DeclareMathOperator*{\Sub}{Sub}

\DeclareMathOperator*{\Filt}{Filt}

\DeclareMathOperator*{\ind}{ind}

\DeclareMathOperator*{\brick}{brick}

\DeclareMathOperator*{\tors}{tors}

\DeclareMathOperator*{\torf}{torf}

\DeclareMathOperator*{\JI}{JI}
\DeclareMathOperator*{\MI}{MI}
\DeclareMathOperator*{\Hasse}{Hasse}
\DeclareMathOperator*{\LM}{LM}

\newcommand{\cupdot}{\mathbin{\mathaccent\cdot\cup}}

\makeatletter
\newcommand*{\doublerightarrow}[2]{\mathrel{
  \settowidth{\@tempdima}{$\scriptstyle#1$}
  \settowidth{\@tempdimb}{$\scriptstyle#2$}
  \ifdim\@tempdimb>\@tempdima \@tempdima=\@tempdimb\fi
  \mathop{\vcenter{
    \offinterlineskip\ialign{\hbox to\dimexpr\@tempdima+1em{##}\cr
    \rightarrowfill\cr\noalign{\kern.5ex}
    \rightarrowfill\cr}}}\limits^{\!#1}_{\!#2}}}
\newcommand*{\triplerightarrow}[1]{\mathrel{
  \settowidth{\@tempdima}{$\scriptstyle#1$}
  \mathop{\vcenter{
    \offinterlineskip\ialign{\hbox to\dimexpr\@tempdima+1em{##}\cr
    \rightarrowfill\cr\noalign{\kern.5ex}
    \rightarrowfill\cr\noalign{\kern.5ex}
    \rightarrowfill\cr}}}\limits^{\!#1}}}
\makeatother

\makeatletter
\newcommand{\doublewidetilde}[1]{{%
  \mathpalette\double@widetilde{#1}%
}}
\newcommand{\double@widetilde}[2]{%
  \sbox\z@{$\m@th#1\widetilde{#2}$}%
  \ht\z@=.9\ht\z@
  \widetilde{\box\z@}%
}
\makeatother

\begin{document}

\title[Left modularity and extremality for infinite lattices]{Left modularity and extremality for (some) infinite lattices}

\author[Sota Asai, Osamu Iyama, Kaveh Mousavand, Charles Paquette]{Sota Asai, Osamu Iyama, Kaveh Mousavand, Charles Paquette} 

\address{Sota Asai: Graduate School of Mathematical Sciences, the University of Tokyo, Japan}
\email{sotaasai@g.ecc.u-tokyo.ac.jp}
\address{Osamu Iyama: Graduate School of Mathematical Sciences, the University of Tokyo, Japan}
\email{iyama@ms.u-tokyo.ac.jp}
\address{Kaveh Mousavand: Representation Theory and Algebraic Combinatorics Unit, Okinawa Institute of Science and Technology (OIST), Japan}
\email{mousavand.kaveh@gmail.com}
\address{Charles Paquette: Department of Mathematics and Computer Science, Royal Military College of Canada, Kingston ON, Canada}
\email{charles.paquette.math@gmail.com}

\subjclass [2020]{16G10, 06A07, 05E10, 16S90, 06D75}
\keywords{Complete Lattice, Left modular, Extremal, Torsion Class, Bricks}


\begin{abstract} 
For some important families of complete infinite lattices, we study some generalizations of two fundamental notions which are mostly treated for finite lattices. 
Specifically, for well-separated $\kappa$-lattices, and also for weakly atomic completely semidistributive lattices, we generalize the notions of left modularity and extremality. 
These two families of lattices coincide if restricted to finite lattices, but are distinct when infinite lattices are also included. 
For both families, we prove that extremality and left modularity imply each other. 
Furthermore, for weakly atomic completely semidistributive lattices, we give several conceptual characterizations of left modular elements, and show that the set of left modular elements form a complete distributive sublattice. 
Our results, combined with some recent work on finite lattices, imply that the weakly atomic completely semidistributive lattices that are left modular (or extremal) generalize the semidistributive trim lattices; from finite to infinite lattices. 
We then apply our results to the lattice of torsion classes of finite dimensional algebras, which are known to fall in the intersection of the two families treated in our work. For an algebra $A$, we obtain that the lattice of torsion classes is left modular (equivalently, extremal) if and only if $A$ is brick-directed. This leads to an abundance of concrete examples and non-examples.
\end{abstract}

\maketitle

\tableofcontents

\newpage

\section{Summary and main results}\label{Sec: Summary of Main Results}
\stoptoc

\subsection{Summary}
Here we briefly present our motivations and main contributions. For notation, terminology, and background material, see Section \ref{Section: Preliminaries and Background} and references therein. 
Throughout, unless otherwise specified, $(L,\leq)$ denotes a complete lattice, which is not necessarily finite. When there is no confusion, we simply denote a (complete) lattice by $L$. The treatment of arbitrary lattices is known to be formidable, and most studies assume some additional conditions on the lattice. 
Some notable recent developments in lattice theory have been inspired by the remarkable properties of lattices of torsion classes of finite dimensional algebras, leading to important results beyond finite lattices; see \cite{DI+, RST} and the references therein. 

\medskip

Distributive lattices are known to satisfy many nice properties, but they form a rather small family. For finite lattices, several generalizations of the distributivity property have been studied; see \cite{Mu} and the references therein. Notably, extremal lattices and left modular lattices are two of the most studied generalizations of finite distributive lattices; respectively introduced in \cite{Ma} and \cite{LS}. Neither of these properties implies the other one. 
However, for a semidistributive finite lattice $L$, recent results of \cite{TW, Mu}, together, show that $L$ is left modular if and only if $L$ is extremal. That being the case, $L$ is a trim lattice \cite{Th}. 
This establishes elegant connections between some generalizations of distributive lattices; albeit under the assumptions that $L$ is finite and semidistributive. 
Meanwhile, as proved in \cite{Ma}, every finite lattice can be embedded as a subinterval into an extremal lattice, hence a complete classification of extremal lattices seems out of reach.

\medskip

Thanks to the fruitful links between representation theory and lattice theory, a rich collection of lattices arises from the former area, more specifically in connection with the study of torsion classes, bricks, and related topics. The treatment of lattice of torsion classes has led to many fundamental results in a broader context, including ``The fundamental theorem of finite semidistributive lattices"; see \cite{RST}. 
For any finite dimensional associative $k$-algebra $A$, the lattice of torsion classes in $\modu A$, denoted by $(\tors A, \subseteq)$, is known to be a completely semidistributive weakly atomic well-separated $\kappa$-lattice \cite{DI+}. 
For a lattice $(L,\leq)$, these properties can be seen as a kind of discreteness of $L$; however, $\tors A$ is not necessarily finite, nor even countable. 
In fact, $\tors A$ is finite if and only if $A$ is brick-finite (see \cite{AIR, DIJ}). 

\medskip

Building upon our recent work \cite{AI+} on the lattice theory of torsion classes, we go beyond the representation-theoretical context and study some families of infinite lattices that possess a subset of the properties of $(\tors A, \subseteq)$. 
More specifically, we are primarily interested in infinite lattices and treat those which are well-separated $\kappa$-lattices, and also those lattices that are completely semidistributive weakly atomic. 
For both classes, we generalize the classical notions of extremality and left modularity. 
Moreover, in our setting, we study the implications between our generalized notions of left modularity and extremality. 
Before presenting our main results and putting our work and some more recent progress into a broader perspective, we give an outline of this manuscript.

\subsection{Outline} The next subsection is comprised of our main results (Subsection \ref{Subsection: Main Results}). 
In Section \ref{Section: Preliminaries and Background}, after setting up our notation and terminology, we collect some tools from representation theory and lattice theory, used throughout the paper. 
In Section \ref{Section: Left modularity and extremality for well-separated kappa-lattices}, we primarily focus on well-separated $\kappa$-lattices. 
More specifically, in Subsection \ref{Subsection: Generalization of left modularity and extremality for kappa-lattices} we introduce the generalized notions of left modularity and extremality for arbitrary (complete) lattices. Restricted to finite lattices, we verify that our generalizations coincide with the notions originally defined in \cite{LS} and \cite{Mu} for finite lattices. 
Then, in Subsection \ref{Subsection: Equivalence of left modularity and extremality for kappa-lattices}, we show that our generalized notions of left modularity and extremality are equivalent over well-separated $\kappa$-lattices; this is the counterpart of some recent results on finite semidistributive lattices in \cite{Mu, TW}. 
In Section \ref{Section: Left modularity and Extremality for weakly atomic lattices}, we shift our attention to weakly atomic and completely semidistributive lattices. 
In Subsection \ref{Subsection: Left modularity in weakly atomic completely semidistributive lattices}, we give several alternative characterizations of left modular elements in such lattices, and in Subsection \ref{Subsection: Extremality in weakly atomic completely semidistributive lattices} we compare the new notions of left modularity and extremality; we prove that they are in fact equivalent. 
Then, in Subsection \ref{Subsection: The labelling quiver}, we present a significantly less technical realization of left modularity (equivalently, extremality) in terms of labelling quivers. 
In Section \ref{Sec: Brick-directed algebras, Left modularity, and Extremality}, via lattices of torsion classes, we link our new results to the study of brick-directed algebras. This ultimately generalizes some earlier work in this direction and generates a large collection of explicit examples for extremal and left modular infinite lattices.
Our results on the infinite lattices treated in this work also inspire some new problems, including those formulated in Remarks \ref{Remark-Question} and \ref{Remark-Question: on the infinite cardinal}. 

\subsection{Main Results}\label{Subsection: Main Results}
Throughout, $(L,\leq )$ always denotes a complete lattice. An element $x$ of $L$ is called \emph{left modular} if for any $y\leq z$, we have $(y\vee x)\wedge z=y\vee(x\wedge z)$. By $\LM(L)$, denote the set of all left modular elements in $L$. Following \cite{LS}, a finite lattice is called \emph{left modular} if it has an inclusion-maximal chain of left modular elements. 
Moreover, a finite lattice is said to be \emph{extremal} if it has exactly $n$ join-irreducibles as well as $n$ meet-irreducibles, and a chain of length $n$ (namely, the chain has $n+1$ elements); see \cite{Ma}. 
All terms that are not explicitly defined in this section are recalled or introduced in Section \ref{Section: Preliminaries and Background}.

\medskip

We propose an extension of left modularity and extremality beyond finite lattices. 
Our proposed generalization of left modularity is the obvious candidate: We say a (possibly-infinite but complete) lattice $L$ is \emph{\textbf{left modular}} if it has an inclusion-maximal chain of left modular elements. 
In contrast, our generalization of extremality is a bit more technical (for some clarification on the technicality and simplification of this notion in some cases, see Remark \ref{Rem: Technicality in extremality} and Section \ref{Section: Left modularity and Extremality for weakly atomic lattices}): We say an arbitrary lattice $L$ is \emph{\textbf{extremal}} if there exist a bijection $\lambda: {\JI^c}(L)\to {\MI^c}(L)$ and a maximal chain $\mathcal{X}$ in $L$ such that, for each $x\in \mathcal{X}$, if $j \in {\JI^c}(L)$ satisfies $j\not\leq x$, then $x\leq \lambda(j)$, and further, if $x\lessdot y$ for two elements $x,y$ of $\mathcal{X}$, then there is a unique completely join-irreducible $j$ which is below $y$ (i.e., $j<y$) but not below $x$, and a unique completely meet-irreducible $m$ which is above $x$ (i.e., $x<m$) but not above $y$, and $\lambda(j)=m$. 
If $L$ is finite, it is evidently left modular in our sense if and only if it is left modular in the classical sense. Also, $L$ is extremal in our sense if and only if it is extremal in the usual sense; Lemma \ref{Lem: generalized extremal for finite lattices}. 

\medskip

In Section \ref{Section: Left modularity and extremality for well-separated kappa-lattices}, for the well-separated $\kappa$-lattices, we prove that our more general notions of extremality and left modularity are equivalent. Then, in Section \ref{Section: Left modularity and Extremality for weakly atomic lattices}, we show a similar result for weakly atomic completely semidistributive lattices, where the notion of extremality is realized via a noticeably less technical condition. 
Before we state our first result, let us emphasize that, the two families of lattices treated in Sections \ref{Section: Left modularity and extremality for well-separated kappa-lattices} and \ref{Section: Left modularity and Extremality for weakly atomic lattices} are different; see Remark \ref{Rem: comparison between our two implications}. 
More specifically, neither of the assertions of the following theorem implies the other one. 

\begin{theorem}[Theorems \ref{Thm: for well-sep kappa lattice, left modular equi to extremal} and \ref{Thm: Equiv left modular & Extremal for weakly atomic completely semidistributive lattices}] \label{Thm: Equivalences for infinite lattices}
Let $(L,\leq)$ be a complete lattice. 
\begin{enumerate}
    \item Let $L$ be a well-separated $\kappa$-lattice. Then, $L$ is left modular if and only if $L$ is extremal.
    \item Let $L$ be a weakly atomic completely semidistributive lattice. Then, $L$ is left modular if and only if $L$ is extremal.
\end{enumerate}
\end{theorem}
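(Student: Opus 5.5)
The plan is to prove both parts through one common bridge: for a maximal chain $\mathcal{X}$, every element of $\mathcal{X}$ is left modular if and only if $\mathcal{X}$ satisfies the covering/labelling conditions in the definition of extremality. Each of the two hypotheses (well-separated $\kappa$-lattice, or weakly atomic completely semidistributive) is what supplies the missing ingredients in its own case. Throughout I write $\kappa(j)$ for the completely meet-irreducible element attached to a completely join-irreducible $j$. It is the largest element $m$ with $m\geq j_*$ and $m\not\geq j$, where $j_*$ is the unique lower cover of $j$. In a well-separated $\kappa$-lattice, $\kappa$ is by definition a bijection $\JI^c(L)\to\MI^c(L)$. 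In a completely semidistributive lattice, $\kappa$ exists and is a bijection by the standard results recalled in Section~\ref{Section: Preliminaries and Background}. In both cases I take $\lambda=\kappa$.

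\textbf{Left modular $\Rightarrow$ extremal.} Let $\mathcal{X}$ be a maximal chain of left modular elements. I first check that $\mathcal{X}$ is a maximal chain in $L$. If some $z\notin\mathcal{X}$ were comparable to every element of $\mathcal{X}$, one shows, using completeness and that joins and meets of chains of left modular elements are left modular, that a suitable element can be inserted, contradicting maximality. I will check this step carefully in the general complete setting.

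The main step is the following. Let $x\in\mathcal{X}$ and $j\in\JI^c(L)$ with $j\not\leq x$. I want $x\leq\kappa(j)$. Left modularity of $x$ applied to $j_*\leq j$ gives
\[
(j_*\vee x)\wedge j=j_*\vee(x\wedge j).
\]
Since $x\wedge j<j$, we have $x\wedge j\leq j_*$, so the right-hand side equals $j_*$. Hence $j\not\leq j_*\vee x$, while $j_*\vee x\geq j_*$. By the maximality defining $\kappa(j)$ this gives $j_*\vee x\leq\kappa(j)$, so $x\leq\kappa(j)$.

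Next take a cover $x\lessdot y$ in $\mathcal{X}$. Well-separatedness, respectively weak atomicity with complete semidistributivity, gives some $j\in\JI^c(L)$ with $j\leq y$ and $j\not\leq x$. For uniqueness, suppose $j'$ is another such element. Then $x\leq\kappa(j)$ by the previous step. Also $y=x\vee j=x\vee j'$. The left modular identity on the interval $[x,y]$ then forces $\kappa(j)\wedge y=x=\kappa(j')\wedge y$. From this I derive $j=j'$ using the $\kappa$-bijection, or, in the semidistributive case, the fact that the labels of a cover are unique there. Uniqueness of $m$ above $x$ and not above $y$ follows dually, with $m=\kappa(j)$.

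\textbf{Extremal $\Rightarrow$ left modular.} Let $\lambda$ and $\mathcal{X}$ be as in the definition of extremality. First I show that $\lambda=\kappa$ on the labels of covers in $\mathcal{X}$, and on all of $\JI^c(L)$, by using the condition "$j\not\leq x\Rightarrow x\leq\lambda(j)$" along the chain. Then, for $x\in\mathcal{X}$ and $y\leq z$, the inequality $y\vee(x\wedge z)\leq(y\vee x)\wedge z$ always holds. Suppose it were strict. By well-separatedness, respectively weak atomicity, I can separate the two sides by some $j\in\JI^c(L)$ with $j\leq(y\vee x)\wedge z$ and $j\not\leq y\vee(x\wedge z)$, chosen so that $\kappa(j)\geq y\vee(x\wedge z)$. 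If $j\leq x$ then $j\leq x\wedge z$, which is impossible, so $j\not\leq x$. The extremality condition then gives $x\leq\kappa(j)$. Together with $y\leq\kappa(j)$ this yields $y\vee x\leq\kappa(j)$, contradicting $j\leq y\vee x$.

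The main obstacle is this last choice: finding a completely join-irreducible $j$ that separates the two sides and also has $\kappa(j)$ above the smaller side. In case (1) I will take this directly from the well-separated $\kappa$-lattice property (the smaller side is a meet of $\kappa$'s). In case (2) I will pass to a cover $u\lessdot v$ inside the interval, which exists by weak atomicity, and use the cover's label together with the fact that in completely semidistributive lattices $\kappa$ of a label lies above the bottom of the cover.
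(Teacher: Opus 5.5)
Your core argument is correct once two side steps are dropped and one step is filled in (see below). For part (1) it is essentially the paper's proof. The computation with left modularity at $j_*\le j$, giving $x\le\kappa(j)$ whenever $j\not\le x$, and the separation argument for the converse are Propositions~\ref{Prop:For well-separated kappa-lattices, left modular implies extremal} and~\ref{Prop:For well-separated kappa-lattices, extremal implies left modular}.

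For part (2) your route is genuinely different. The paper does not reuse Section~\ref{Section: Left modularity and extremality for well-separated kappa-lattices} there. Instead it:
\begin{itemize}
\item proves the pointwise criterion of Proposition~\ref{Prop: Characterization of left modular}, reducing left modularity to cover relations via [TW, Lemma 2.8];
\item passes to the reformulation of extremality in Definition~\ref{Defn extremality and modularity}, where each $j$ labels the cover $\bigvee\{x\in\mathcal X : j\not\le x\}\lessdot\bigwedge\{y\in\mathcal X : j\le y\}$;
\item translates back through Proposition~\ref{Prop: extremality equals left modularity}.
\end{itemize}
You notice that well-separation is only ever used to separate a comparable pair $p<q$ by some $j\le q$ with $p\le\kappa(j)$. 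In case (2), the label of any cover $p\le u\lessdot v\le q$ does this job (Lemma~\ref{Lemma: cover relation and join-irred}). This turns (1) and (2) into a single argument. It also shows directly, with no appeal to [TW], that any $t$ with $t\le\kappa(j)$ for all $j\not\le t$ is left modular. The paper's longer route pays off later: the pointwise criterion and the fact that every $j$ labels a cover of an extremal chain drive Proposition~\ref{Prop: LM(L) is distributive sublattice} and Theorems~\ref{Thm: bijection between LM(L) and successor-closed sets} and~\ref{Thm: labelling quiver, linear extensions and extremal chains}.

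Three things need repair.

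(i) Your preliminary check is false: a chain that is maximal among left modular elements need not be a maximal chain of $L$. Take the hexagon, i.e.\ the weak order on $S_3$. It is finite semidistributive with $\LM(L)=\{\hat 0,\hat 1\}$ by Corollary~\ref{Cor: On tors kQ and Weyl group}(2). So $\{\hat 0,\hat 1\}$ is maximal in $\LM(L)$ but not in $L$, and the hexagon is not extremal (four join-irreducibles, length three). Under your reading the theorem itself would fail. The definition means a maximal chain of $L$ all of whose elements are left modular. With that reading the step is vacuous and should simply be dropped.

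(ii) You do not need to prove $\lambda=\kappa$, since Definition~\ref{Def: Extremal for arbitrary lattices} fixes $\lambda=\kappa$ for $\kappa$-lattices. Your sketch would in any case only give $\lambda(j)=\kappa(j)$ for those $j$ that label a cover of $\mathcal X$, via uniqueness of $m$. For $j$ sitting at a limit point $c=\bigvee\{u\in\mathcal X: u<c\}$ of the chain, the condition you invoke only yields $c\le\lambda(j)$.

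(iii) In case (1), the phrase ``derive $j=j'$ using the $\kappa$-bijection'' hides the one delicate step; a bijection alone gives nothing here. It can be done as follows.
\begin{itemize}
\item Since $x\le\kappa(j)$ and $x\lessdot y$, we have $\kappa(j)^*\wedge y\in\{x,y\}$.
\item The value $x$ is impossible: it would give $j\le\kappa(j)^*\wedge y=x$, because $\kappa(j)^*=\kappa(j)\vee j$. Hence $y\le\kappa(j)^*$.
\item We have $j'\le y$, and $j'\not\le\kappa(j)$, since otherwise $y=x\vee j'\le\kappa(j)$. Hence $\kappa(j)\vee j'=\kappa(j)^*$.
\item By the minimality defining $\kappa^{-1}$, $j=\kappa^{-1}(\kappa(j))\le j'$.
\item The symmetric argument, using $x\le\kappa(j')$, gives $j'\le j$.
\end{itemize}
The paper instead obtains $j<j'$ and contradicts left modularity of $x$ at $j'_*\le j'$. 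Note also that $\kappa(j)\wedge y=x$ comes from the cover together with $j\not\le\kappa(j)$, not from the left modular identity on $[x,y]$.
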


To compare with similar results on finite lattices, note that, for a finite lattice $L$, it is known that $L$ is a well-separated $\kappa$-lattice if and only if $L$ is (weakly atomic completely) semidistributive; see \cite[Theorem 3.1]{RST}. 
Hence, the families of lattices considered in parts (1) and (2) of Theorem \ref{Thm: Equivalences for infinite lattices} coincide for finite lattices, but they are known to differ in the infinite case; see Remark \ref{Rem: comparison between our two implications}. 
Moreover, if a finite lattice $L$ belongs to either (thus, both) of these families, then $L$ is left modular if and only if $L$ is extremal; see Theorem \ref{Thm: trim=extremal=left modular} for these facts on finite lattices from \cite[Theorem 1.4]{TW} and \cite[Corollary 3.4]{Mu}.
Hence, Theorem \ref{Thm: Equivalences for infinite lattices} can be seen as generalizations of these implications, where we also include the infinite lattices specified above. 

\medskip

For a weakly atomic completely semidistributive lattice $L$, let $\kappa$ be the bijection between completely join-irreducible and completely meet-irreducible elements in $L$, given by $\kappa: \JI^c(L) \to \MI^c(L)$, where $\kappa(j):=\max \{y\mid j\wedge y=j_*\}$, for $j \in \JI^c(L)$. 
Also, recall that for such a lattice $L$, the \emph{labelling quiver} of $L$, denoted by $Q_L$, is the directed graph whose vertices are elements of $\JI^c(L)$, and for $i, j \in L$, there is an arrow $i \to j$ if $i \ne j$ and $i \not \le \kappa(j)$. For details, see Section \ref{Subsection: The labelling quiver}.
A set $S$ of vertices of $Q_L$ is \emph{successor-closed} if for every arrow $i\to j$ in $Q_L$, if $i \in S$ then $j \in S$. By ${\rm succ}(Q_L)$ we denote the set consisting of all successor-closed sets of vertices of $Q_L$.

\begin{theorem}[Theorem \ref{Thm: bijection between LM(L) and successor-closed sets}]\label{Thm: successor-closed sets and set of left modular elements}\label{Thm: LM(L) and successor-closed sets in Introduction}
Let $L$ be a weakly atomic completely semidistributive lattice. 
Then,
    $\varphi: 
    \LM(L) \to {\rm succ}(Q_L)$, where $\varphi(t) := \{j \in \JI^c(L) \mid j \le t\}$,
    is a bijection, whose inverse is given by $\psi(S) = \bigvee S$.
\end{theorem}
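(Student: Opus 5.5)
Throughout we use the standard facts for a weakly atomic completely semidistributive lattice $L$: every element is the join of the completely join-irreducibles below it and the meet of the completely meet-irreducibles above it. For $j\in\JI^c(L)$, $\kappa(j)$ is the largest element $y$ with $j\wedge y=j_*$. Moreover, for $j\in\JI^c(L)$ and any $x$, we have $x\le\kappa(j)$ if and only if $x\vee j_*\not\ge j$ (the usual "$j\not\le x\vee j_*$" characterization). We write $J(t):=\{j\in\JI^c(L)\mid j\le t\}$, so that $t=\bigvee J(t)$ and $\psi\circ\varphi=\mathrm{id}$ automatically. The plan therefore has three steps: (a) $\varphi(t)$ is successor-closed whenever $t$ is left modular; (b) $\bigvee S$ is left modular for every $S\in{\rm succ}(Q_L)$; (c) $J(\bigvee S)=S$ for such $S$, which gives $\varphi\circ\psi=\mathrm{id}$.

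For (a), let $t\in\LM(L)$, $i\le t$, and suppose $i\not\le\kappa(j)$ with $j\ne i$. We must show $j\le t$. Suppose $j\not\le t$. Apply left modularity to the pair $y=j_*\le z=j$: this gives $(j_*\vee t)\wedge j=j_*\vee(t\wedge j)$. Since $j\not\le t$, we have $t\wedge j<j$, so $t\wedge j\le j_*$, and the right-hand side is $j_*$. Hence $j\not\le j_*\vee t$. Since $i\le t$, this also gives $j\not\le j_*\vee i$, i.e. $i\le\kappa(j)$, a contradiction.

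For (c), let $S$ be successor-closed, $s=\bigvee S$, and let $j\le s$ with $j\in\JI^c(L)$. Suppose $j\notin S$. Because $S$ is successor-closed, no $i\in S$ has an arrow to $j$, so every $i\in S$ satisfies $i\le\kappa(j)$. Then $s\le\kappa(j)$, which forces $j\not\le s$, a contradiction. Hence $J(s)=S$.

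For (b), take $y\le z$. The inequality $y\vee(s\wedge z)\le(y\vee s)\wedge z$ always holds, so suppose it is strict. Using weak atomicity, choose a cover $a\lessdot b$ inside the interval $[y\vee(s\wedge z),(y\vee s)\wedge z]$. Complete semidistributivity lets us label this cover by a unique $j\in\JI^c(L)$ with $j\le b$, $j\not\le a$, $a\vee j=b$ and $a\le\kappa(j)$. We then split according to whether $j\in S$.
\begin{itemize}
\item If $j\in S$, then $j\le s\wedge b\le s\wedge z\le a$, which is a contradiction.
\item If $j\notin S$, then by (c) every $i\in S$ satisfies $i\le\kappa(j)$, so $s\le\kappa(j)$. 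We also want $y\le\kappa(j)$; this holds because $y\le a\le\kappa(j)$. Therefore $y\vee s\le\kappa(j)$, so $b\le\kappa(j)$. Together with $j\le b$ this gives $j\le\kappa(j)$, which is impossible since $j\wedge\kappa(j)=j_*<j$.
\end{itemize}
Both cases are contradictory, so equality holds and $s$ is left modular.

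The main obstacle is the cover-labelling step in (b). We need that for any cover $a\lessdot b$ there is a $j\in\JI^c(L)$ with $a\vee j=b$ and $a\le\kappa(j)$, and that this $j$ is minimal among the completely join-irreducibles below $b$ but not below $a$. This is the standard join-labelling for completely semidistributive lattices, presumably recalled in Section~\ref{Section: Preliminaries and Background}, and we would invoke it there.
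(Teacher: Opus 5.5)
Your proof is correct and follows the paper's three-step structure: $\varphi(t)$ is successor-closed, $\bigvee S$ is left modular, and $\varphi\circ\psi=\mathrm{id}$ via the argument that $j\notin S$ forces $\bigvee S\le\kappa(j)$; the only difference is that you reprove the needed directions of Proposition~\ref{Prop: Characterization of left modular} inline instead of citing it. For the direction ``$j\le s$ or $s\le\kappa(j)$ for all $j$ implies $s$ is left modular,'' you choose a cover $a\lessdot b$ inside the failing interval $[y\vee(s\wedge z),(y\vee s)\wedge z]$, which is a clean self-contained alternative to the paper's reduction to cover relations via the proof of \cite[Lemma 2.8]{TW}; it needs only $a\vee j=b$ and $a\le\kappa(j)$ from Lemma~\ref{Lemma: cover relation and join-irred}, and the minimality of the label that you flag as an obstacle is never actually used.
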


Our next result gives various characterizations of the left modular elements in weakly atomic completely semidistributive lattices. Such lattices are known to be $\kappa$-lattice (\cite[Theorem 3.1]{RST}), but not necessarily well-separated; see Remark \ref{Rem: comparison between our two implications}. The following theorem, combined with Theorem \ref{Thm: successor-closed sets and set of left modular elements}, also implies that ${\rm succ}(Q_L)$ inherits the structure of a completely distributive lattice from $\LM(L)$.

\begin{theorem}[Propositions \ref{Prop: Characterization of left modular}, \ref{Prop: left modular and cover-relation equivalences}, and \ref{Prop: LM(L) is distributive sublattice}]
\label{Thm: left modular elements in infinite lattices}
Let $(L,\leq)$ be a weakly atomic completely semidistributive lattice. For $x\in L$, the following are equivalent:
\begin{enumerate}
    \item $x$ is left modular.
    \item For each $j\in{\JI^c}(L)$, either $j\le x$ or $x\le \kappa(j)$ holds.
    \item For any cover relation $y\lessdot z$ in $L$, we have $x \vee y = x \vee z$ or $x \wedge y = x \wedge z$ but not both.
    \item $x = \bigvee_{j \in S}j$ for a successor-closed subquiver $S$ of the labelling quiver of $L$.
\end{enumerate}
Moreover, $\LM(L)$ forms a complete sublattice of $L$ which is completely distributive.
\end{theorem}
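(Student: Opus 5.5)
We will prove the cycle (1)$\Rightarrow$(2)$\Rightarrow$(1), then (2)$\Leftrightarrow$(3) and (2)$\Leftrightarrow$(4), and finally the sublattice statement. Throughout we use the standard facts for a weakly atomic completely semidistributive lattice $L$ (via \cite[Theorem 3.1]{RST}). First, every element is the join of the completely join-irreducibles below it and the meet of the completely meet-irreducibles above it. Second, each cover $y\lessdot z$ has a unique label $j\in\JI^c(L)$ with $j\le z$, $j\not\le y$, $y\vee j=z$; dually $y\le\kappa(j)$, $z\not\le\kappa(j)$, and $z\wedge\kappa(j)=y$. Third, $\kappa(j)$ is characterized by $j\wedge y=j_*\iff y\le\kappa(j)$ for $y\ge j_*$, and more generally $j\le y$ or $y\le \kappa(j)$ fails exactly when the label $j$ "crosses" $y$.

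\textbf{(1)$\Rightarrow$(2).} Let $x$ be left modular and $j\in\JI^c(L)$ with $j\not\le x$. Apply left modularity to $j_*\le j$: we get $(j_*\vee x)\wedge j=j_*\vee(x\wedge j)$. Since $x\wedge j<j$, we have $x\wedge j\le j_*$, so the right side is $j_*$. Hence $(j_*\vee x)\wedge j=j_*$. The defining property of $\kappa$ then gives $j_*\vee x\le\kappa(j)$, so $x\le\kappa(j)$.

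\textbf{(2)$\Rightarrow$(1).} Take $y\le z$. The inequality $y\vee(x\wedge z)\le(y\vee x)\wedge z$ always holds. Suppose it is strict. By weak atomicity, the interval between the two elements contains a cover $u\lessdot v$; let $j$ be its label, so $j\le v\le z$ and $j\not\le u$. Since $j\not\le u\ge x\wedge z$ and $j\le z$, we get $j\not\le x$. By (2), $x\le\kappa(j)$, and also $y\le u\le\kappa(j)$. Hence $v\le (y\vee x)\wedge z\le\kappa(j)$, contradicting $v\not\le\kappa(j)$. This step, choosing the cover and checking that its label separates, is the main technical point. Care is needed because $\kappa(j)\ge u$ is given but $\kappa(j)\ge y\vee x$ must be derived, which follows since both $y$ and $x$ lie below $\kappa(j)$.

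\textbf{(2)$\Leftrightarrow$(3).} For a cover $y\lessdot z$ with label $j$, we have $x\vee y=x\vee z$ iff $j\le x\vee y$, and $x\wedge y=x\wedge z$ iff $x\wedge z\le\kappa(j)$. Under (2), if $j\le x$ the joins agree, and the meets differ since $(x\wedge z)\vee y$ contains $j$. If instead $x\le\kappa(j)$, the meets agree, and the joins differ because $x\vee y\le\kappa(j)\not\ge j$. Conversely, given $j$, use the cover $j_*\lessdot j$. Exactly one of $x\vee j_*=x\vee j$ or $x\wedge j=x\wedge j_*$ holds. The second gives $j\not\le x$, and "not both" then forces $x\vee j_*\neq x\vee j$. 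To conclude $x\le\kappa(j)$, use the cover $x\wedge\kappa(j)\lessdot\ldots$: more cleanly, if $x\not\le\kappa(j)$, pick a cover inside $[x\wedge\kappa(j),x]$ whose label $j'$ is sent by $\kappa$ to an element below which $\kappa(j)$ fails. We expect to reduce this to the covers $\kappa(j)\lessdot\kappa(j)^*$, where the dual argument gives $j\le x$ or $x\le\kappa(j)$.

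\textbf{(2)$\Leftrightarrow$(4).} Given (2), set $S=\{j: j\le x\}$. Then $x=\bigvee S$. For an arrow $i\to j$ with $i\in S$, we have $i\not\le\kappa(j)$, so $x\not\le\kappa(j)$, hence $j\le x$. Conversely, if $S$ is successor-closed and $x=\bigvee S$, consider $j\not\le x$. Then $j\notin S$, so for all $i\in S$, $i\le\kappa(j)$ by successor-closedness, whence $x\le\kappa(j)$. For this we must also know that no $j\le\bigvee S$ lies outside $S$ in a way that breaks the argument. This is handled since we only need $j\not\le x\Rightarrow j\notin S$.

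\textbf{Sublattice.} Condition (2) is preserved under arbitrary joins and meets. For joins: if $j\not\le\bigvee x_\alpha$ then $j\not\le x_\alpha$ for all $\alpha$, so all $x_\alpha\le\kappa(j)$. Meets are dual. By (4), $\varphi$ identifies $\LM(L)$ with the sets $\{j\le x\}$, which are closed under unions and intersections. Intersection corresponds to meet, directly. For union, $j\le\bigvee x_\alpha$ forces $j\le x_\alpha$ for some $\alpha$ by the first argument, since otherwise all $x_\alpha\le\kappa(j)$. Hence $\LM(L)$ is a complete sublattice isomorphic to a complete lattice of sets under $\cup,\cap$, which is completely distributive.
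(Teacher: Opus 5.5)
\textbf{Gap: (3)$\Rightarrow$(2) is never proved.} After reaching $x\vee j_*\neq x\vee j$, you switch to an unexecuted plan: pick a cover in $[x\wedge\kappa(j),x]$ with some label $j'$, then ``we expect to reduce this to the covers $\kappa(j)\lessdot\kappa(j)^*$''. So, as written, nothing shows that (3) implies any of the other conditions. The missing step is one line, and it uses the maximality of $\kappa(j)$ exactly as your (1)$\Rightarrow$(2) does. Suppose $j\not\le x$. Then $x\wedge j<j$ gives $x\wedge j\le j_*$, so $x\wedge j=x\wedge j_*$ on the cover $j_*\lessdot j$. ``Not both'' then gives $x\vee j_*\neq x\vee j=x\vee j_*\vee j$, i.e.\ $j\not\le x\vee j_*$. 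Hence $j_*\le (x\vee j_*)\wedge j<j$ forces $(x\vee j_*)\wedge j=j_*$, and the definition of $\kappa(j)$ as the maximum of such elements yields $x\le x\vee j_*\le\kappa(j)$. No detour through $[x\wedge\kappa(j),x]$ is needed.

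\textbf{Alternative fix: go from (3) straight to (1).} Your own (2)$\Rightarrow$(1) construction already does the work. If left modularity fails for some $y\le z$, weak atomicity gives a cover $u\lessdot v$ with $y\vee(x\wedge z)\le u\lessdot v\le (y\vee x)\wedge z$. On that cover, $x\vee u=x\vee v=x\vee y$ and $x\wedge u=x\wedge v=x\wedge z$. This contradicts ``not both'' without using labels at all.

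\textbf{The other parts are sound.} Your (1)$\Rightarrow$(2), (2)$\Rightarrow$(3) and (2)$\Leftrightarrow$(4) are correct.

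Your (2)$\Rightarrow$(1) differs slightly from the paper, to its advantage. You take a cover $u\lessdot v$ inside $[y\vee(x\wedge z),(y\vee x)\wedge z]$ and use its label directly. The paper instead first reduces to covers via the proof of \cite[Lemma 2.8]{TW} and then checks the identity on each cover.

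Your complete-distributivity argument is also a cleaner version of the paper's detour through $g$ and $\kappa^{-1}$ of complements. You show directly that $\varphi$ turns arbitrary joins into unions, since $j\le\bigvee x_\alpha$ with $j\not\le x_\alpha$ for all $\alpha$ would force $j\le\kappa(j)$.
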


We further observe that, using the labelling defined in \cite[Lem. 3.11 \& Rem. 3.12]{RST}, the characterization of left modular elements in part $(2)$ of Theorem \ref{Thm: left modular elements in infinite lattices} is analogous to that of \cite[Theorem 1.4 (iii)]{LS} given in terms of cover relations. For more details, see Remark \ref{Rem: comparison with LS}. 

\medskip

Recall that the \emph{spine} of a finite lattice $L$ is the set of elements that lie on some maximum length chain in $L$. 
If $L$ is trim, the spine of $L$ is the same as $\LM(L)$ and forms a distributive sublattice of $L$; see \cite[Theorem 3.7]{TW}. 
In our setting of  weakly atomic completely semidistributive lattices, which includes infinite lattices, thanks to Theorems \ref{Thm: Equivalences for infinite lattices} and \ref{Thm: left modular elements in infinite lattices}, if $L$ is extremal (equivalently, left modular) we have that $\LM(L)$ is always a distributive sublattice of $L$; this can be seen as the generalization of the spine, originally defined for finite lattices.

\medskip

Observe that it follows from the above theorem that if $L$ is extremal, then $Q_L$ has no oriented cycles. In this case, we can give $Q_L$ the structure of a poset as follows: for $x, y \in \JI^c(L)$, we write $x \preceq y$ if there is an oriented path from $y$ to $x$. With this structure, extremal chains are in bijective correspondence with linear extensions of $\preceq$ as follows. For more details, see Subsection \ref{Subsection: The labelling quiver}.

\begin{theorem}[Theorem \ref{Thm: labelling quiver, linear extensions and extremal chains}]\label{Thm: labelling quiver, linear extensions and extremal chains in Introduction}
Let $(L,\leq)$ be a weakly atomic completely semidistributive lattice. If $L$ is extremal, there exists a bijection between linear extensions of $\preceq$ and extremal chains in $L$. Given a linear extension $\mathcal{L}$ of $(Q_L, \preceq)$, the set $D(\mathcal{L})$ of downsets of $\mathcal{L}$ yields the extremal chain $\{\bigvee D \mid D \in D(\mathcal{L})\}$.
\end{theorem}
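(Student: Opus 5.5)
We prove Theorem \ref{Thm: labelling quiver, linear extensions and extremal chains in Introduction} by combining Theorem \ref{Thm: LM(L) and successor-closed sets in Introduction} with Theorem \ref{Thm: left modular elements in infinite lattices}. First we observe that, since $L$ is extremal, Theorem \ref{Thm: Equivalences for infinite lattices}(2) makes $L$ left modular. In that setting, any maximal chain of $\LM(L)$ is a maximal chain of $L$. We take as given, as the paper notes just before the statement, that $Q_L$ is then acyclic, so $\preceq$ is a partial order on $\JI^c(L)$.

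The key translation is between successor-closed sets and the poset. A set $S\subseteq \JI^c(L)$ is successor-closed exactly when it is a downset for $\preceq$: if $y\in S$ and $x\preceq y$, there is a path from $y$ to $x$, so $x\in S$, and conversely. Under the bijection $\varphi$ of Theorem \ref{Thm: LM(L) and successor-closed sets in Introduction}, $\LM(L)$ is therefore order-isomorphic to the lattice of downsets of $(\JI^c(L),\preceq)$. Here we need that $\varphi$ and $\psi$ are order-preserving in both directions. For $\varphi$ this is clear. For $\psi$ it holds because $t=\bigvee\varphi(t)$, using weak atomicity and complete semidistributivity.

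Next we identify the extremal chains. I would define an extremal chain as a maximal chain $\mathcal{X}$ witnessing extremality, with $\lambda=\kappa$. I would prove that these are exactly the maximal chains contained in $\LM(L)$.

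For one direction, let $x\in\mathcal{X}$. The extremality condition says that $j\not\le x$ implies $x\le\kappa(j)$. This is criterion (2) of Theorem \ref{Thm: left modular elements in infinite lattices}, so $x\in\LM(L)$.

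For the converse, take a maximal chain $\mathcal{C}$ of $\LM(L)$. Each element satisfies (2). By the isomorphism above, $\mathcal{C}$ corresponds to a maximal chain of downsets of $\preceq$. A maximal chain of downsets must be closed under arbitrary unions and intersections, and successive members differ by exactly one element $j$. This gives the uniqueness of the labels required for covers $x\lessdot y$, with $m=\kappa(j)$. Finally, maximality of $\mathcal{C}$ in $L$ follows as in the proof of Theorem \ref{Thm: Equivalences for infinite lattices}(2).

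It then remains to biject maximal chains of downsets of a poset $P$ with linear extensions of $P$. Given a linear extension $\mathcal{L}$, its downsets $D(\mathcal{L})$ are downsets of $P$, form a chain, and that chain is maximal. Conversely, a maximal chain $\mathcal{D}$ of downsets determines a total order: set $a\le_{\mathcal D} b$ iff every $D\in\mathcal{D}$ containing $b$ contains $a$. This order is total because $\mathcal{D}$ separates points: maximality gives, for $a\ne b$, a member containing exactly one of them. It extends $\preceq$, and its downsets are exactly $\mathcal{D}$, again by maximality together with closure under unions and intersections. Composing with $\psi$ gives the formula $\{\bigvee D\}$.

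The main obstacle is the infinite setting. We must verify that a maximal chain of downsets is closed under arbitrary unions and intersections. We must also check that the correspondence between downsets of $\mathcal{L}$ and cuts is exact, including the limit cuts that are not principal. Finally, we must ensure that maximality in $\LM(L)$ transfers to maximality in $L$. For the last point I would rely on the complete sublattice property from Theorem \ref{Thm: left modular elements in infinite lattices} and on weak atomicity to produce covers.
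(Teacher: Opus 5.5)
Your argument is correct, but it is organized differently from the paper's. The paper works directly with labels. For a linear extension $\mathcal{L}$, it uses Theorem~\ref{Thm: bijection between LM(L) and successor-closed sets} to see that $j$ is the only completely join-irreducible element below $\bigvee D(j)$ but not below $\bigvee D(j^-)$. Hence $\bigvee D(j^-)\lessdot\bigvee D(j)$ is a cover labelled by $j$. Conversely, for an extremal chain it orders $\JI^c(L)$ by the position of the labelled covers. It then applies Proposition~\ref{Prop: Characterization of left modular} at the bottom $x$ of the cover labelled $j$ to get $i\le v\le x\le\kappa(j)$ whenever $i$ precedes $j$, so there is no arrow $i\to j$. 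Finally, it declares the two constructions mutually inverse.

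You instead factor the correspondence as linear extensions of $(\JI^c(L),\preceq)$ $\leftrightarrow$ maximal chains of downsets $\leftrightarrow$ maximal chains of $\LM(L)$ $\leftrightarrow$ extremal chains. This uses three ingredients:
\begin{enumerate}
\item the order isomorphism $\varphi$;
\item a purely combinatorial lemma on downsets of a poset;
\item the identification of extremal chains with maximal chains of $L$ lying in $\LM(L)$, which is essentially what the proof of Theorem~\ref{Thm: Equivalences for infinite lattices}(2) shows.
\end{enumerate}
The paper's route is shorter and avoids the poset lemma. Yours is more conceptual and reduces the inverse check to that lemma. It also makes explicit that $\{\bigvee D\mid D\in D(\mathcal{L})\}$ is a \emph{maximal} chain of $L$, including at non-principal cuts of $\mathcal{L}$, which the paper takes for granted.

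The one step you leave open is that a maximal chain $\mathcal{C}$ of $\LM(L)$ is maximal in $L$. It does not follow ``as in the proof'' of that theorem, since that proof starts from a chain already maximal in $L$. Your listed ingredients do suffice, as follows.
\begin{enumerate}
\item For $j\in\JI^c(L)$ put $D^-_j=\bigcup\{D\in\varphi(\mathcal{C})\mid j\notin D\}$ and $D^+_j=\bigcap\{D\in\varphi(\mathcal{C})\mid j\in D\}$. Maximality forces $D^-_j\cup{\downarrow}j=D^+_j$ and $D^+_j\setminus{\uparrow}j=D^-_j$, where ${\downarrow}j$ and ${\uparrow}j$ are taken for $\preceq$. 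Hence $D^+_j\setminus D^-_j=\{j\}$. This is exactly where antisymmetry of $\preceq$, i.e.\ Corollary~\ref{Cor: Q_L being acyclic}, is used.
\item Every pair of consecutive members of $\varphi(\mathcal{C})$ has this form.
\item By spatiality (Theorem~\ref{Thm:RST implications}), $\psi(D^-_j)\lessdot\psi(D^+_j)$ in $L$. Indeed, for any $z$ in between, the completely join-irreducibles below $z$ form $D^-_j$ or $D^+_j$, since $\varphi\psi=\mathrm{id}$.
\item $\mathcal{C}$ is closed under arbitrary joins and meets of $L$, by Proposition~\ref{Prop: LM(L) is distributive sublattice}. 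So for $z\in L$ comparable with all of $\mathcal{C}$, both $a=\bigvee\{c\in\mathcal{C}\mid c\le z\}$ and $b=\bigwedge\{c\in\mathcal{C}\mid c\ge z\}$ lie in $\mathcal{C}$. If $z\neq a,b$, then $a,b$ would be consecutive in $\mathcal{C}$, hence $a\lessdot b$ in $L$, contradicting $a<z<b$.
\end{enumerate}

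A minor point: in your order built from $\mathcal{D}$, totality comes from $\mathcal{D}$ being a chain, while separation of points gives antisymmetry.
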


\medskip

Adapting this lattice theoretical framework in the representation theory of finite dimensional algebras, our recent study of brick-directed algebras \cite{AI+} allows us to generalize some of our results on the lattice theory of torsion classes. In fact, we give a full characterization of those algebras whose lattice of torsion classes are left modular; equivalently, extremal. 
Before we state the next result, we reiterate that, for any algebra $A$, the lattice of torsion classes satisfies all the assumptions of those lattices treated in Theorems \ref{Thm: Equivalences for infinite lattices} and \ref{Thm: left modular elements in infinite lattices}; see \cite{DI+, RST}. 
The next theorem gives a characterization of left modularity (equivalently, extremality) of $\tors A$ in terms of the brick-splitting torsion pairs introduced in \cite{AI+}. 
Recall that for $\mathcal{T} \in \tors A$, we say $(\mathcal{T}, \mathcal{T}^{\perp})$ is a \emph{brick-splitting} torsion pair if every brick $B$ in $\modu A$ belongs either to $\mathcal{T}$ or $\mathcal{T}^{\perp}$. That being the case, $\mathcal{T}$ is called a \emph{brick-splitting} torsion class. For the definition of the brick quiver of an algebra, and the successor-closed subquiver, see, respectively, Section \ref{Subsection:Brick-directed Algebras} and Section \ref{Subsection: The labelling quiver}.

\begin{theorem}[Propositions \ref{Prop: brick-splitting, left modular, successor-closed} \& \ref{Prop: tors A being left modular, extremal, and a chain}]\label{thm: characterization by brick-splitting}
Let $A$ be a connected finite dimensional algebra. 
For any torsion class $\mathcal{T} \in \tors A$, the following are equivalent:
\begin{enumerate}
    \item $\mathcal{T}$ is a left modular element of the lattice $\tors A$.
    \item {The bricks in $\mathcal{T}$ forms a successor-closed subquiver of the brick quiver of $A$.}
\end{enumerate}
Therefore, $\tors A$ is left modular (equivalently, extremal) if and only if there exists a maximal chain $\{\mathcal{T}_i\}_{i\in I}$ in $\tors A$ such that each $\mathcal{T}_i$ is a brick-splitting torsion class.
\end{theorem}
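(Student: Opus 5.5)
The plan is to specialize Theorems \ref{Thm: left modular elements in infinite lattices} and \ref{Thm: successor-closed sets and set of left modular elements} to $L=\tors A$, which is weakly atomic and completely semidistributive by \cite{DI+, RST}. Both results need the standard dictionary of \cite{DI+}: completely join-irreducible torsion classes correspond bijectively to bricks of $A$, via $B\mapsto \mathsf{T}(B)$, the smallest torsion class containing $B$. Under this bijection $\kappa(\mathsf{T}(B))={}^{\perp}B$, the largest torsion class $\mathcal{U}$ with $\Hom(\mathcal{U},B)=0$. In particular, $\mathsf{T}(B)\le\mathcal{T}$ if and only if $B\in\mathcal{T}$, and $\mathcal{T}\le\kappa(\mathsf{T}(B))$ if and only if $\Hom(\mathcal{T},B)=0$, i.e.\ $B\in\mathcal{T}^{\perp}$.

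The first step is to identify the labelling quiver $Q_{\tors A}$ with the brick quiver of $A$. The arrow condition $\mathsf{T}(B)\not\le\kappa(\mathsf{T}(B'))$ says $\Hom(\mathsf{T}(B),B')\neq0$. Since every module in $\mathsf{T}(B)$ is filtered by quotients of direct sums of copies of $B$, this is equivalent to $\Hom(B,B')\ne0$ (up to the orientation convention fixed in Section \ref{Subsection:Brick-directed Algebras}). I will check that this matches the arrow definition in the brick quiver, with $B\neq B'$. Granting this, the equivalence (1)$\Leftrightarrow$(2) follows from Theorem \ref{Thm: successor-closed sets and set of left modular elements}: $\varphi(\mathcal{T})$ is exactly the set of bricks lying in $\mathcal{T}$. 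For the reverse direction, a successor-closed set $S$ of bricks gives the left modular element $\bigvee S$, and the bricks of this element are $S$ again because $\varphi$ and $\psi$ are mutually inverse. Independently, criterion (2) of Theorem \ref{Thm: left modular elements in infinite lattices} shows that $\mathcal{T}$ is left modular if and only if every brick lies in $\mathcal{T}$ or in $\mathcal{T}^{\perp}$, i.e.\ if and only if $\mathcal{T}$ is brick-splitting. This is the form I will use for the final assertion.

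The final assertion is then just the definition. $\tors A$ is left modular exactly when some maximal chain consists of left modular elements, i.e.\ of brick-splitting torsion classes. Equivalence with extremality is Theorem \ref{Thm: Equivalences for infinite lattices}(2).

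The main obstacle is the careful identification of arrows. I need $\Hom(\mathsf{T}(B),B')\neq0\iff\Hom(B,B')\ne0$, which follows because $B'$ receives a nonzero map from some module filtered by factors of $B^{n}$ exactly when it receives one from $B$: restrict to the first filtration layer on which the map is nonzero. I also need to match this orientation and the direction of ``successor-closed'' with the brick-quiver convention. Connectedness of $A$ seems irrelevant to the equivalence itself; presumably it is used only in later refinements in the cited propositions.
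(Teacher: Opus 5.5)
Your proof is correct, and it takes a somewhat different route from the paper's main argument. The paper justifies (1)$\Leftrightarrow$(2) by citing \cite[Props.~3.4, 3.5]{AI+}. It justifies ``left modular $\Leftrightarrow$ brick-splitting'' by citing \cite[Theorem~1.1]{AI+} (Theorem \ref{Thm: brick-splitting torsion class characterization}). It mentions only in passing that (1)$\Leftrightarrow$(2) could also be obtained from Theorem \ref{Thm: bijection between LM(L) and successor-closed sets}.

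You instead derive both facts from Proposition \ref{Prop: Characterization of left modular} and Theorem \ref{Thm: bijection between LM(L) and successor-closed sets}. The only input you need is the \cite{DI+} dictionary $\JI^c(\tors A)=\{\mathcal{T}(B)\mid B\in\brick A\}$ with $\kappa(\mathcal{T}(B))={}^{\perp}B$. Under this dictionary, criterion (2) of Proposition \ref{Prop: Characterization of left modular} reads literally ``$B\in\mathcal{T}$ or $B\in\mathcal{T}^{\perp}$''. The labelling quiver is the brick quiver with the same orientation: $\mathcal{T}(B)\to\mathcal{T}(B')$ if and only if $B\neq B'$ and $\Hom_A(B,B')\neq 0$. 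So no orientation hedge is needed. Your filtration argument for this works. Quicker still: if $\Hom_A(B,B')=0$, then $B$ lies in the torsion class ${}^{\perp}B'$, hence so does $\mathcal{T}(B)$.

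Your version shows that the representation-theoretic statement is a formal specialization of the lattice results of Section \ref{Section: Left modularity and Extremality for weakly atomic lattices}. It also writes out the quiver identification that the paper leaves implicit. The cost is reliance on the formula for $\kappa$ in $\tors A$, which the paper never states; the paper's citation route needs nothing beyond \cite{AI+}.

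One step should be made explicit. In (2)$\Rightarrow$(1), you know that $\psi(S)=\bigvee S$ is left modular and has exactly the bricks $S$ of $\mathcal{T}$. But $\varphi\psi=\mathrm{id}$ alone does not give $\mathcal{T}=\psi(S)$, since $\varphi$ is defined only on $\LM(L)$ and $\mathcal{T}$ is not yet known to lie there. You need $\mathcal{T}=\bigvee_{B\in\mathcal{T}\cap\brick A}\mathcal{T}(B)$. This is spatiality of $\tors A$ (Theorem \ref{Thm:RST implications}(1)), or directly: every module in a torsion class has a filtration by bricks lying in that class.

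The same fact is needed on your independent route for ``successor-closed $\Rightarrow$ brick-splitting''. There it lets you pass from $\Hom_A(B',B)=0$ for all bricks $B'\in\mathcal{T}$ to $\Hom_A(\mathcal{T},B)=0$. The paper uses this silently as well, when it calls $\psi\circ\varphi=\mathrm{id}$ obvious.

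Your remark that connectedness plays no role is right; Proposition \ref{Prop: brick-splitting, left modular, successor-closed} is stated without it.
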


From Theorems \ref{Thm: left modular elements in infinite lattices} and \ref{thm: characterization by brick-splitting}, we obtain a less technical and completely algebraic characterization of the left modularity (equivalently, extremality) of $\tors A$. Before stating this result, recall that a cycle $X_0\xrightarrow{f_1} X_1\xrightarrow{f_2}  \cdots \xrightarrow{f_{m-1}}  X_{m-1}\xrightarrow{f_m}  X_m=X_0$ in $\modu A$ is called a \emph{brick-cycle} if $X_i \in \modu A$ is a brick, for all $0\leq i\leq m$, and the morphisms $f_i$ are nonzero and non-invertible (for details, see Section \ref{Subsection:Brick-directed Algebras}). An algebra $A$ is said to be \emph{brick-directed} if there exists no brick-cycle in $\modu A$. 

\begin{corollary}[Corollary \ref{Cor: brick-directed algs and left modularity=extremality}]
\label{Cor: left modularity and brick-directed algs}
For an algebra $A$, the following are equivalent:
\begin{enumerate}
    \item $A$ is brick-directed.
    \item $\tors A$ is a left modular lattice.
    \item $\tors A$ is an extremal lattice.
    \item The brick quiver of $A$ has no oriented cycles.
\end{enumerate}
\end{corollary}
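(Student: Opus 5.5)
The corollary follows by chaining the results stated above. Throughout, $\tors A$ is a weakly atomic, completely semidistributive, well-separated $\kappa$-lattice by \cite{DI+, RST}. Hence Theorem \ref{Thm: Equivalences for infinite lattices}(2) gives (2)$\Leftrightarrow$(3) immediately. Before the main argument, we reduce to connected $A$. If $A=A_1\times A_2$, then $\tors A\cong \tors A_1\times\tors A_2$, every brick lives over one factor, and the brick quiver is the disjoint union of the brick quivers of the factors. In a product of complete lattices, a maximal chain of left modular elements can be built by concatenation: take a maximal chain in the first factor (with the second coordinate at $0$), then continue in the second factor (with the first coordinate at $1$). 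Since $\LM(L_1\times L_2)=\LM(L_1)\times\LM(L_2)$, all four conditions are compatible with finite products. So we may assume $A$ is connected, which makes Theorem \ref{thm: characterization by brick-splitting} available.

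For (2)$\Leftrightarrow$(4) we work with $L=\tors A$ and identify its labelling quiver $Q_L$ with the brick quiver. Completely join-irreducible torsion classes are the $\mathsf{T}(B)$ for bricks $B$. An arrow $\mathsf{T}(B)\to\mathsf{T}(B')$ means $\mathsf{T}(B)\not\subseteq\kappa(\mathsf{T}(B'))$, and this should translate to the existence of a nonzero non-invertible map between $B$ and $B'$ in the direction used to define the brick quiver in Section \ref{Subsection:Brick-directed Algebras}. I take this identification as the content behind the equivalence (1)$\Leftrightarrow$(2) of Theorem \ref{thm: characterization by brick-splitting}.

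\emph{(4)$\Rightarrow$(2).} Suppose $Q_L$ is acyclic, and put the order $\preceq$ on its vertices. Choose a maximal chain $\mathcal{C}$ of downsets for $\preceq$ (successor-closed sets), using Zorn's lemma or a linear extension. By Theorem \ref{Thm: LM(L) and successor-closed sets in Introduction}, $S\mapsto\bigvee S$ sends $\mathcal{C}$ to a chain of left modular elements. It remains to show this chain is maximal in $L$, i.e.\ that it has no gaps. Successive successor-closed sets in $\mathcal{C}$ differ by a single vertex $j$, and $\bigvee S\lessdot\bigvee(S\cup\{j\})$ is a cover. Limit stages are handled by the fact that arbitrary joins and meets of left modular elements are left modular (Theorem \ref{Thm: left modular elements in infinite lattices}) and correspond to unions and intersections. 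Any element comparable with the whole chain is then squeezed between consecutive members, and weak atomicity together with the single-vertex difference forces it to equal one of them. I expect this maximality check to be the main obstacle.

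\emph{(2)$\Rightarrow$(4).} Suppose $L$ is left modular, witnessed by a maximal chain $\mathcal{X}$ of left modular elements, so that each $\varphi(x)$ is successor-closed. Let $i\to j$ be an arrow on a hypothetical oriented cycle. Since $\mathcal{X}$ is maximal and complete, every $j\in\JI^c(L)$ has a least $x\in\mathcal{X}$ with $j\le x$. At the cover $x_*\lessdot x$ in $\mathcal{X}$, exactly one completely join-irreducible element, namely $j$ itself, enters; this is the extremality/cover-labelling property from Theorem \ref{Thm: Equivalences for infinite lattices}. Define $h(j)$ to be this position of $j$ in the chain. Successor-closedness of each $\varphi(x)$ gives $h(j)\le h(i)$ whenever $i\to j$, and $h$ is injective. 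Going around a cycle would therefore force all the $h$-values to be equal, and injectivity then forces all the vertices to be equal, contradicting $i\ne j$. Hence $Q_L$ is acyclic.

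Finally, (1)$\Leftrightarrow$(4) is the translation between brick-cycles and oriented cycles in the brick quiver. Given a brick-cycle, each nonzero non-invertible map $X_{i}\to X_{i+1}$ between bricks either has $X_i\not\cong X_{i+1}$, giving an arrow or a path, or is a non-invertible endomorphism. The latter is impossible because bricks have division endomorphism rings. So a brick-cycle yields an oriented cycle in the brick quiver. Conversely, the arrows of an oriented cycle are given by nonzero non-invertible maps between non-isomorphic bricks, and these compose into a brick-cycle. This last step is essentially definitional, given how the brick quiver is set up in Section \ref{Subsection:Brick-directed Algebras}.
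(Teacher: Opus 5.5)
Your proof is correct in substance, but it takes a different route from the paper's.

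\textbf{The paper's route.} The paper argues purely by citation. \cite[Theorem 4.1]{AI+} gives (1)$\Leftrightarrow$(4)$\Leftrightarrow$ ``there is a maximal chain of brick-splitting torsion classes''. Proposition~\ref{Prop: tors A being left modular, extremal, and a chain}, which rests on \cite[Theorem 1.1]{AI+} (brick-splitting $=$ left modular), then turns that chain condition into (2) and (3).

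\textbf{Your route.} You verify (1)$\Leftrightarrow$(4) directly from the definitions. This is right: a nonzero endomorphism of a brick is invertible, and a nonzero map between non-isomorphic bricks is never invertible. You then prove (2)$\Leftrightarrow$(4) inside the lattice theory of Section 4.
\begin{itemize}
\item Your (2)$\Rightarrow$(4) is essentially Corollary~\ref{Cor: Q_L being acyclic}.
\item Your (4)$\Rightarrow$(2) is a converse the paper does not state. The idea appears in the first half of the proof of Theorem~\ref{Thm: labelling quiver, linear extensions and extremal chains}, but that theorem assumes extremality from the outset.
\item Your maximality check goes through. By Proposition~\ref{Prop: LM(L) is distributive sublattice} and Theorem~\ref{Thm: bijection between LM(L) and successor-closed sets}, the image $\{\bigvee D\}$ of a maximal chain of successor-closed sets is closed under arbitrary joins and meets in $L$. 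A one-vertex jump $S\subsetneq S\cup\{j\}$ gives a cover because every element is the join of the completely join-irreducibles below it.
\end{itemize}
What your route buys is a general, representation-free statement: a weakly atomic completely semidistributive lattice is left modular if and only if $Q_L$ is acyclic. This replaces the substantive direction of \cite[Theorem 4.1]{AI+}. The paper's route is shorter and also records the brick-splitting chain condition.

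\textbf{The price: the quiver identification.} You assert $Q_{\tors A}=Q^b(A)$ rather than prove it. Theorem~\ref{thm: characterization by brick-splitting} does not directly supply it, since it only compares successor-closedness of sets of the form $\brick A\cap\mathcal{T}$ and says nothing about arrows. The clean justification has two ingredients:
\begin{itemize}
\item $\JI^c(\tors A)=\{\mathcal{T}(B)\mid B\in\brick A\}$;
\item the standard fact $\kappa(\mathcal{T}(B'))={}^\perp B'$.
\end{itemize}
Together these give: $\mathcal{T}(B)\not\subseteq\kappa(\mathcal{T}(B'))$ iff $B\notin{}^\perp B'$ iff $\Hom_A(B,B')\neq 0$. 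That is exactly the arrow rule of $Q^b(A)$.

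\textbf{Smaller points.}
\begin{itemize}
\item Theorem~\ref{Thm: Equivalences for infinite lattices} only asserts that \emph{some} extremal chain exists. The unique-entry property you use for your particular chain $\mathcal{X}$ comes from the proof of that theorem, namely that every maximal chain of left modular elements is extremal. It is cleaner to cite Corollary~\ref{Cor: Q_L being acyclic} directly.
\item The reduction to connected $A$ is harmless but unnecessary on your route. Nothing in Section 4 or in the quiver identification uses connectedness.
\end{itemize}
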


As the first consequence of Corollary \ref{Cor: left modularity and brick-directed algs}, combined with our result \cite[Prop. 4.9]{AI+}, we obtain a full classification of those path algebras $kQ$ for which $\tors kQ$ is left modular (equivalently, extremal) in the more general sense. We further use our representation-theoretical results on self-injective algebras \cite[Cor. 3.16]{AI+}, together with an important dictionary developed in \cite{IT} and \cite{Mi}, to conclude a result on the left modular elements of Weyl groups of simply laced Dynkin diagrams. 
The first part of the next result was shown in \cite[Prop. 4.9]{AI+}. Here, we formualte it in the lattice-theoretical framework adopted and developed in our current work.

\begin{corollary}[{Corollary \ref{Cor: On tors kQ and Weyl group}}]\label{Cor: On tors kQ and Weyl group in Introduction}
Let $Q$ be a connected acyclic quiver. Then
\begin{enumerate}
    \item The lattice of torsion classes $\tors kQ$ is left modular if and only if $Q$ is a Dynkin quiver or $Q$ is the Kronecker quiver.
    \item For a Dynkin quiver $Q$ and the associated Weyl group lattice  $(W_Q,\leq)$ with respect to the weak order, $\hat{0}$ and $\hat{1}$ are the only left modular elements.
\end{enumerate}

\end{corollary}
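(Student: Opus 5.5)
Part (1) was already proved in \cite[Prop. 4.9]{AI+}; here we recast it using Corollary \ref{Cor: left modularity and brick-directed algs}. By that corollary, $\tors kQ$ is left modular if and only if $kQ$ is brick-directed, so it suffices to decide brick-directedness for path algebras of connected acyclic quivers. If $Q$ is Dynkin, every brick is an indecomposable preprojective module. The Auslander--Reiten quiver of $kQ$ is then directed: any chain of nonzero non-invertible maps between indecomposables strictly increases position in the AR-quiver. Hence no brick-cycle exists. For the Kronecker quiver, the bricks are the preprojectives, the preinjectives and the regular quasi-simples $R_\lambda$ with $\lambda\in\mathbb{P}^1$. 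There are no nonzero maps from regular to preprojective, from preinjective to regular or preprojective, or between distinct $R_\lambda$, $R_\mu$. The only nonzero endomorphisms of each $R_\lambda$ are scalars, and the preprojective and preinjective components are directed. Hence there is no brick-cycle. Conversely, if $Q$ is neither Dynkin nor Kronecker, we exhibit a brick-cycle. For Euclidean $Q$ other than the Kronecker quiver, some tube has rank at least $2$. Its quasi-simples form a cyclic chain $E_1\to E_2\to\cdots\to E_r\to E_1$ of bricks linked by nonzero non-invertible maps through the quasi-length-two bricks; more precisely, we use $E_i\to M_{i}\to E_{i+1}$, where $M_i$ is the brick of quasi-length $2$. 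For wild $Q$, we reduce to a Euclidean or generalized Kronecker full subquiver and use the brick-cycles available there, for instance those among regular bricks in the $m$-Kronecker case with $m\ge3$.

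For part (2) we use the dictionary of \cite{IT} and \cite{Mi}. The Weyl group $(W_Q,\le)$ with the weak order is isomorphic to $\tors \Pi_Q$, where $\Pi_Q$ is the preprojective algebra of the Dynkin diagram. By Theorem \ref{thm: characterization by brick-splitting}, an element $\mathcal{T}$ is left modular exactly when it is brick-splitting. By \cite[Cor. 3.16]{AI+}, a connected non-local self-injective algebra has only the trivial brick-splitting torsion classes $0$ and $\modu \Pi_Q$. Since $\Pi_Q$ is self-injective, and is connected and non-local for $Q$ connected with at least two vertices, the claim follows. When $Q$ has a single vertex, $W_Q$ has only two elements, so the statement is trivial.

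The main obstacle is the wild case in part (1). This is where we rely on \cite[Prop. 4.9]{AI+} rather than reprove it. The key tool is that brick-cycles in a full subquiver algebra $kQ'$ remain brick-cycles in $kQ$, because $\modu kQ'$ embeds fully and faithfully into $\modu kQ$.
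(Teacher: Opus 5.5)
Your route matches the paper's. Part (1) is Corollary~\ref{Cor: brick-directed algs and left modularity=extremality} plus \cite[Prop.~4.9]{AI+}. Part (2) combines three ingredients: Mizuno's isomorphism $W_Q\cong\tors\Pi_Q$, the triviality of brick-splitting torsion classes for connected self-injective algebras, and the equivalence of left modular and brick-splitting elements. That element-wise equivalence is Theorem~\ref{Thm: brick-splitting torsion class characterization}, i.e.\ \cite[Thm.~1.1]{AI+}; Theorem~\ref{thm: characterization by brick-splitting} is phrased via successor-closed sets, which is equivalent but not literally what you quote.

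Your direct checks for Dynkin, Kronecker and Euclidean non-Kronecker quivers are correct and go beyond what the paper writes out. In the tube case the cycle must really be $E_1\to M_1\to E_2\to\cdots\to E_r\to M_r\to E_1$, since $\Hom(E_i,E_{i+1})=0$. It is a brick-cycle because the quasi-length-two modules in a tube of rank $r\ge 2$ are bricks.

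The wild-case sketch, however, is not a valid argument and should not be presented as the mechanism. Take $Q\colon 1\rightrightarrows 2\to 3$. It is neither Dynkin nor Euclidean (the only Euclidean graph with a multiple edge is $\widetilde{A}_1$), so it is wild. Yet its proper full subquivers are copies of $A_1$, $A_2$, the arrowless quiver on $\{1,3\}$, and the Kronecker quiver on $\{1,2\}$. All of these have brick-directed path algebras, so extension by zero from a full subquiver yields no brick-cycle here. The same failure occurs for other quivers whose multiple edges are all double and which have no Euclidean non-Kronecker full subquiver.

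To cover such cases you need a different full exact embedding. One option is that wild hereditary algebras are strictly wild, combined with \cite[Cor.~4.8]{AI+}. The other is simply the citation of \cite[Prop.~4.9]{AI+}, which is what the paper uses for all of part (1). Since you do fall back on that citation, the proof is complete, but the full-subquiver reduction should be dropped.
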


As another implication of Corollary \ref{Cor: left modularity and brick-directed algs}, together with an explicit construction of brick-directed algebras given in \cite[Section 4.3]{AI+}, we have the following corollary. This particularly results in an abundance of concrete examples of left modular and extremal lattices of arbitrary length. 

\begin{corollary}[Corollary \ref{Cor: left modular and extremal lattices of big cardinality}] \label{Cor: left modular and extremal lattices of big cardinality in Introduction}
With the same notation as above,
\begin{enumerate}
    \item For every $m>4$, there exist infinitely many connected algebras $A$ for which $\tors A$ is a left modular (equivalently, extremal) lattice of length $m$.

    \item For every cardinality $\aleph$, there exist a left modular and extremal lattice with $2^{\aleph}$ many maximal chain of size ${\aleph}$ consisting of left modular elements.

    \item For every $\aleph$, there exists an extremal lattice for which $\LM(L)$ is of size $2^{\aleph}$.
\end{enumerate}
\end{corollary}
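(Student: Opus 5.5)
The plan is to derive all three parts from Corollary \ref{Cor: left modularity and brick-directed algs} together with the explicit brick-directed constructions of \cite[Section 4.3]{AI+}. For $\tors A$, the length of the lattice equals the length of a maximal chain of left modular elements. By Theorem \ref{thm: characterization by brick-splitting}, such elements are brick-splitting torsion classes, and by Theorem \ref{Thm: labelling quiver, linear extensions and extremal chains in Introduction} such chains correspond to linear extensions of the poset $(\brick A,\preceq)$ given by the brick quiver. Hence the length of an extremal chain equals the number of bricks, and the number of extremal (left modular) maximal chains equals the number of linear extensions.

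For (1), I would take the families of brick-directed algebras from \cite[Section 4.3]{AI+} having exactly $m$ bricks. Natural candidates are representation-directed algebras (e.g.\ suitable bound quiver algebras of type $\mathbb{A}$ with varying relations, or one-point extensions), arranged so that the number of indecomposables, all of which are bricks, is exactly $m$. Finiteness of $\tors A$ makes it a finite semidistributive lattice, and by Corollary \ref{Cor: left modularity and brick-directed algs} it is left modular and extremal of length $m$. To get \emph{infinitely many} such algebras, I would vary the field $k$ or the non-isomorphic algebras in the construction; the bound $m>4$ indicates the construction in \cite{AI+} needs that much room. This step amounts to bookkeeping with that construction.

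For (2) and (3), I would need infinite cardinalities, so finite dimensional algebras alone do not suffice (their $\tors$ has at most continuum size). Instead I would construct lattices directly via Theorem \ref{Thm: LM(L) and successor-closed sets in Introduction}. Take an index set $I$ of cardinality $\aleph$ and form the lattice $L=\prod_{i\in I} L_i$ (or an ordinal sum), where each $L_i$ is a small left modular lattice, e.g.\ $\tors$ of a Kronecker-type or $A_2$ algebra. Products of weakly atomic completely semidistributive lattices remain so. Their labelling quiver is the disjoint union of the $Q_{L_i}$, hence acyclic, so $L$ is extremal by Theorem \ref{Thm: Equivalences for infinite lattices}(2) and Corollary-type reasoning. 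Successor-closed sets of a disjoint union with no arrows between components number at least $2^{\aleph}$, giving (3) via Theorem \ref{Thm: LM(L) and successor-closed sets in Introduction}. For (2), choose $Q_L$ to be an antichain (or disjoint chains) on $\aleph$ vertices, e.g.\ the Boolean lattice $2^{I}$. Its linear extensions correspond to well-orderings or total orders of $I$, and there are $2^{\aleph}$ of them. Each yields a maximal chain of left modular elements of size about $\aleph$ by Theorem \ref{Thm: labelling quiver, linear extensions and extremal chains in Introduction}. One should pick a total order, such as an ordinal, whose downset chain has cardinality exactly $\aleph$.

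The main obstacle I expect is (2): for infinite posets, downsets of a linear extension can produce a chain that is not maximal, or whose cardinality exceeds $\aleph$ (a dense order has more cuts). So I would restrict to well-orders of $I$ of order type the initial ordinal $\aleph$, since then there are exactly $\aleph$ downsets, and verify maximality using weak atomicity. Counting $2^{\aleph}$ such well-orders, e.g.\ by permuting $I$, is standard.
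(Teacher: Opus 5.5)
\textbf{The gap is in part (1), in the words \emph{infinitely many}.}

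Existence of one connected brick-directed algebra with exactly $m$ bricks for each $m\ge 5$ is routine. Linearly oriented type $\mathbb{A}$ algebras with monomial relations already give every such count: $2n-1$ bricks with $\operatorname{rad}^2=0$, and $2n$ if one path of length two is allowed. The paper instead uses $\Lambda_3=kA_2$, $\Lambda_5$, $\Lambda_6=kA_3$ and the rule $|\brick A|=m_1+m_2-1$ for the gluing of \cite[Section 4.3]{AI+}.

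Nothing in your plan produces an infinite family, for three reasons:
\begin{enumerate}
\item Over a fixed algebraically closed field there are, up to isomorphism, only finitely many representation-directed algebras with exactly $m$ indecomposable modules. Your type $\mathbb{A}$ candidates in particular have at most $m$ vertices and finitely many relation patterns.
\item Varying $k$ leaves the fixed-field setting, and it would make the claim trivial for any single example.
\item ``Varying the non-isomorphic algebras in the construction'' presupposes what is to be shown.
\end{enumerate}
The missing idea is that $\tors A$ only sees bricks, so you must leave the representation-directed world. The paper glues $\Lambda_m$ to the local algebras $L_n=k[x_1,\dots,x_n]/\langle x_ix_j : 1\le i\le j\le n\rangle$, whose only brick is the simple module. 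Each $\Lambda_m(L_n)$ is still brick-directed with exactly $m$ bricks, so $\tors\Lambda_m(L_n)$ is left modular and extremal of length $m$. These algebras are pairwise non-isomorphic, since their dimensions differ, and they are representation-infinite for $n\ge 2$. This is precisely why your representation-directed candidates cannot suffice.

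\textbf{Parts (2) and (3): a different route that works, with one false claim.}

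For the statement as phrased in the introduction, where any lattice is allowed, your construction is valid. The Boolean lattice $2^{I}$ with $|I|=\aleph$ is completely distributive and weakly atomic, and every element is left modular, so $|\LM(2^I)|=2^{\aleph}$. The chains of initial segments of the $\aleph^{\aleph}=2^{\aleph}$ well-orders of $I$ of type $\aleph$ are pairwise distinct maximal chains of size $\aleph$ satisfying Definition~\ref{Defn extremality and modularity}. You should check this directly: the paper only proves that extremality forces $Q_L$ to be acyclic, not the converse you invoke.

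Your stated reason for abandoning algebras is false, however. If $k$ is algebraically closed with $|k|=\aleph$, the Kronecker algebra $kQ$ has the pairwise $\Hom$-orthogonal bricks $B_\lambda$, $\lambda\in\mathbb{P}^1(k)$. Hence $|\tors kQ|\ge 2^{\aleph}$, and over $\mathbb{C}$ this already exceeds the continuum.

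This is exactly the paper's proof. The $B_\lambda$ form an antichain of size $\aleph$ in the brick quiver, lying between the preinjective and preprojective bricks. Well-orders on them give $2^{\aleph}$ maximal chains of brick-splitting, i.e.\ left modular, torsion classes. Arbitrary subsets of them give $2^{\aleph}$ successor-closed sets, i.e.\ left modular elements by Theorem~\ref{Thm: bijection between LM(L) and successor-closed sets}.

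So the paper uses the same antichain mechanism as your $2^I$, but realizes it as $\tors A$ of a rank-$2$ algebra. That is what the body version of the corollary asserts, and your construction cannot deliver it.
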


\bigskip

\resumetoc

\section{Preliminaries and background} \label{Section: Preliminaries and Background}

In this section, we briefly recall some facts and terminology from lattice theory and representation theory used in our work. For more details and rudimentary materials in lattice theory, see \cite{Gr}, \cite{DP}, and references therein. Moreover, \cite{ASS} and references therein contain the required materials from representation theory

\subsection{Lattice theory}\label{Subsec: Lattice Theory}
Let $(P,\leq)$ be a non-empty (possibly infinite) partially ordered set. Recall that $x<z$ in $P$ is said to be a \emph{cover} relation, denoted by $x\lessdot z$, provided that for each $y\in P$, if $x\leq y \leq z$, we have either $x=y$ or $y=z$. Moreover, $\Hasse(P)$ denotes the \emph{Hasse quiver} of $P$, which is a directed graph whose vertex set is $P$, and it has an arrow $x\rightarrow z$ for each cover relation $x\lessdot z$. 

\medskip

Recall that $(P,\leq)$ is a \emph{complete lattice} if for any subset $S$ in $P$, there exists a unique element of $P$, smallest with the property of being larger than or equal to all elements of $S$, called the \emph{join} of $S$ and denoted by $\bigvee S$; as well as a unique element of $P$, largest with the property of being smaller than or equal to all elements of $S$, called the \emph{meet} of $S$ and denoted by $\bigwedge S$. 
To denote a lattice, we often use $(L,\leq)$, and for $x$ and $y$ in $L$, the join and meet are respectively denoted by $x\vee y$ and $x\wedge y$. 
In the following, unless specified otherwise, $(L,\leq)$ always denotes a complete lattice, and $\hat{1}$ and $\hat{0}$ respectively denote the greatest element and the least element of $L$ (sometimes called the maximum and minimum of $L$); that is, $\hat{1}:= \bigwedge \emptyset =\bigvee L$ and  $\hat{0}:= \bigwedge L=\bigvee \emptyset$. 
When there is no confusion, we often suppress $\leq$ in the notation and simply use $L$ instead of $(L,\leq)$. 
We also recall that $L$ is said to be \emph{weakly atomic} if, for all $x < y$, there exist $u$ and $v$ with $x \leq  u \lessdot v \leq y$ (equivalently, $\Hasse[x,y]$ contains at least one arrow). Every finite lattice is weakly atomic. 

\medskip

A sequence $x_0<x_1<\cdots<x_i<\cdots$ in $L$ is said to be a \emph{chain}, and is called a finite chain if the sequence consists of only finitely many elements. In particular, the \emph{length} of $x_0<x_1<\cdots<x_r$ is defined to be $r$. Moreover, the length of the lattice $L$ is defined as the maximum of the lengths of finite chains, if there exists such a maximum; otherwise, $L$ is said to be of infinite length. We say a chain $x_0<x_1<\cdots<x_i<\cdots$ is \emph{maximal} if it cannot be further refined to a chain by inserting more elements; this is the case if and only if $x_j\lessdot x_{j+1}$, for each $j$.

\medskip

We recall that $x \in L$ is \emph{join-irreducible} if, for each $y$ and $z$ in $L$ with $y, z <x$, we have $x\neq y\vee z$. By convention, $\hat{0}$ is not considered as a join-irreducible element. Moreover, $x \in L$ is said to be \emph{completely join-irreducible} if for every subset $S\subseteq \{y \in L \mid y<x\}$, we have $x \neq \bigvee_{y\in S} y$. 
Note that $x$ is completely join irreducible if and only if there is an element $x_*$ such that $y<x$ if and only if $y<x_*$. Provided $x$ is join-irreducible but not completely join-irreducible, such an element $x_*$ does not exist. 
Let $\JI(L)$ denote the set of all join-irreducible elements in $L$, and $\JI^c(L)$ denotes the subset of $\JI(L)$ consisting of completely join-irreducible elements. 
The notion of (completely) meet-irreducible elements, and consequently $\MI(L)$ and $\MI^c(L)$, are defined dually. Note that, by convention, $\hat{1}$ does not belong to $\MI(L)$. In particulr, $m$ belongs to $\MI^c(L)$ if and only if there is an element $m^*$ such that $m<y$ if and only if $m^*<y$.

\medskip

A lattice $L$ is called \emph{semidistributive} if, for all $x, y, z \in L$, if $x\vee y = x\vee z$, then $ x \vee (y \wedge z)=x \vee y$, and, furthermore, $x\wedge y = x\wedge z$ implies $x \wedge (y \vee z)=x \wedge y$. 
Consequently, for any cover relation $x\lessdot y$, the set $\{a \in L \,|\, x\vee a=y$\} has a unique minimal element.  
A complete lattice $L$ is \emph{completely semidistributive} if, for every
$x \in L$ and each $S \subseteq L$, the following hold:
\begin{itemize}
    \item If $x \vee y = x \vee z$ for all $y, z \in S$, then $x \vee (\bigwedge S)= x\vee y$, for all $y\in S$.
    \item If $x \wedge y = x \wedge z$ for all $y, z \in S$, then $x \wedge (\bigvee S)= x\wedge y$, for all $y\in S$.
\end{itemize}

A lattice $L$ is said to be \emph{spatial} if every element $x$ in $L$ can be written as a (possibly
infinite) join of completely join-irreducible elements; that is, $x= \bigvee_{j\leq x} j$, where $j\in \JI^c(L)$. The lattice $L$ is called \emph{bi-spatial} if it is spatial and the dual condition holds; namely, every $x \in L$ can be written as a join of completely join-irreducible elements, as well as a meet of completely meet-irreducible elements. 

\medskip

An important subfamily of bi-spatial lattices consists of those which admit particular bijections between $\JI^c(L)$ and $\MI^c(L)$. More specifically, a bi-spatial lattice $L$ is said to be a \emph{$\kappa$-lattice} if there are inverse bijections $\kappa: \JI^c(L) \to \MI^c(L)$ and $\kappa^{-1}: \MI^c(L) \to \JI^c(L)$, respectively defined by $\kappa(j):=\max \{x\in L \,|\, x\wedge j= j_*\}$ and $\kappa^{-1}(m):=\min \{y\in L \,|\, y\vee m= m^*\}$. 
Provided $L$ is a finite lattice, then $L$ is semidistributive if and only if it is a $\kappa$-lattice (for instance, see \cite[Section 2.5]{RST}). 
A $\kappa$-lattice $L$ is called \emph{well-separated} if whenever $x\not\leq y$, there exists $j \in \JI^c(L)$ such that $j \leq x$ and $y\leq \kappa(j)$. 
The following theorem puts the above definitions into a better perspective. Meanwhile, we emphasize that for an arbitrary complete lattice, the condition of being a well-separated $\kappa$-lattice neither implies nor is implied by being completely semidistributive (see Remark \ref{Rem: comparison between our two implications}). 
For a detailed comparison of the above lattice-theoretical conditions, see \cite[Sections 3.1 and 3.3]{RST}.

\begin{theorem}[{\cite[Theorem 3.1]{RST}}]\label{Thm:RST implications}
For a complete lattice $L$ that is completely semidistributive, the following hold:
\begin{enumerate}
    \item If $L$ is weakly atomic, then $L$ is bi-spatial.
    \item $L$ is bi-spatial if and only if $L$ is a $\kappa$-lattice.
\end{enumerate}
\end{theorem}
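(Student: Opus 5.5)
For (1), I would prove spatiality directly and obtain co-spatiality by the dual argument. Fix $x\in L$ and put $y:=\bigvee\{j\in\JI^c(L)\mid j\le x\}$, so $y\le x$. Suppose $y<x$. Weak atomicity gives a cover $y\le u\lessdot v\le x$. Consider $S:=\{a\in L\mid u\vee a=v\}$, which is nonempty since $v\in S$. All elements of $S$ have the same join with $u$, so the first bullet of complete semidistributivity gives $u\vee\bigwedge S=v$. Hence $j:=\bigwedge S$ is the minimum of $S$.

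I claim $j\in\JI^c(L)$. If $a<j$, then $a\notin S$. Also $u\le u\vee a\le v$, so the cover $u\lessdot v$ forces $u\vee a=u$, that is, $a\le u$. Consequently $\bigvee\{a\mid a<j\}\le u$. This join cannot equal $j$, because $j\not\le u$ (otherwise $u\vee j=u\neq v$). Thus $j$ is not a join of strictly smaller elements, so $j$ is completely join-irreducible with $j_*=\bigvee\{a<j\}$. But $j\le v\le x$, so $j\le y\le u$, which is a contradiction. Therefore $y=x$. Applying this argument in the dual lattice (using the second bullet) shows every element is a meet of completely meet-irreducibles, hence $L$ is bi-spatial.

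For (2), the implication ``$\kappa$-lattice $\Rightarrow$ bi-spatial'' holds by definition, so the work is the converse; and the construction of $\kappa$ below uses only complete semidistributivity. Fix $j\in\JI^c(L)$ and let $T:=\{x\mid x\wedge j=j_*\}$, which contains $j_*$. By the second bullet applied with the element $j$, we get $j\wedge\bigvee T=j_*$, so $\kappa(j):=\max T$ exists. Next I would show $m:=\kappa(j)\in\MI^c(L)$ with $m^*=m\vee j$. If $m<z$, then $z\notin T$ while $j_*\le z\wedge j\le j$. Every element strictly below $j$ lies below $j_*$, so $z\wedge j=j$, i.e. $j\le z$, and hence $z\ge m\vee j$. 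Moreover $m<m\vee j$ since $j\not\le m$. Dually, for $m\in\MI^c(L)$ the minimum $\kappa^{-1}(m)$ of $\{y\mid y\vee m=m^*\}$ exists by the first bullet, and it is completely join-irreducible.

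It remains to check that the two maps are mutually inverse. Take $m=\kappa(j)$ and $j':=\kappa^{-1}(m)$. Since $j\vee m=m^*$, minimality gives $j'\le j$. If $j'<j$, then $j'\le j_*\le m$, so $j'\vee m=m\ne m^*$, which is a contradiction; hence $j'=j$. The symmetric argument gives $\kappa(\kappa^{-1}(m))=m$. I expect the main delicate point throughout to be producing the extremal elements (the minimum of $S$, $\max T$, and the dual minimum), since each requires the \emph{complete} form of semidistributivity rather than the finite one.
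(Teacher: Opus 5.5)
Your proof is correct and complete. The paper does not prove this statement; it quotes it from \cite[Theorem 3.1]{RST} without argument, so there is no in-paper proof to compare against. What you have written is a self-contained version of the standard argument.

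Two remarks on what your write-up gives beyond the citation.

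In part (1), your key device is the following: for a cover $u \lessdot v$, the element $\bigwedge\{a \mid u \vee a = v\}$ is the minimum of that set, is completely join-irreducible, and satisfies $j_* \le u$. This is exactly the fact the paper later imports from \cite[Lemma 3.11]{RST} in the proof of Lemma~\ref{Lemma: cover relation and join-irred}. So your argument also makes that later citation self-contained.

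In part (2), you actually prove slightly more than is stated. The mutually inverse maps $\kappa$ and $\kappa^{-1}$ exist in every completely semidistributive complete lattice, with no spatiality assumption. The only content of the ``only if'' direction is therefore that bi-spatiality is built into the paper's definition of a $\kappa$-lattice. This is consistent with the paper's later remark that complete semidistributivity alone does not force bi-spatiality.

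One thing to make explicit when you write ``dually'': $j' = \kappa^{-1}(m)$ satisfies $j'_* = j' \wedge m$, the dual of the identity $m^* = m \vee j$ you proved. The symmetric check $\kappa(\kappa^{-1}(m)) = m$ uses exactly this identity.
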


For an arbitrary lattice $L$, recall that an element $x$ in $L$ is said to be \emph{left modular} if for every $y \leq z$ in $L$, we have the equality $(y\vee x)\wedge z = y\vee (x\wedge z)$. Observe that, for a pair of elements $y\leq z$ in $L$, the inequality $(y\vee x)\wedge z \geq y \vee (x\wedge z)$ holds for all $x\in L$; often known as \emph{modular inequality}. Therefore, to verify whether $x$ in $L$ is left modular, one only needs to verify the non-trivial inequality. Let $\LM(L)$ denote the set of all left modular elements of $L$. 

\medskip

For finite lattices, the study of left modular elements, as well as the length of a lattice and its maximal chains, have inspired many works. This has ultimately led to various generalizations of (finite) distributive lattices. 
For instance, in \cite{St2} the author introduced \emph{supersolvable} lattices as graded lattices which have a maximal chain consisting entirely of left modular elements (see also \cite{St1}). 
Moreover, in \cite{LS} left modularity is treated as a generalization of the more restrictive notion of modular elements, where the authors gave some alternative characterizations of left modular elements \cite[Theorem 1.4]{LS}, and generalize some earlier results of \cite{BS}. An arbitrary finite lattice (not necessarily graded) $L$ is said to be \emph{left modular} if it possesses a maximal chain consisting entirely of left modular elements. Every finite distributive lattice is obviously left modular.

\medskip

Another generalization of (finite) distributive lattices is given in terms of the length of $L$ compared to $|{\JI(L)}|$ and $|{\MI(L)}|$. Following \cite{Ma}, a lattice $L$ of length $m$ is said to be \emph{extremal} if $|{\JI(L)}|=|{\MI(L)}|=m$. We remark that the inequalities $|{\JI(L)}|\geq m$ and $|{\MI(L)}|\geq m$ always hold. 
For a detailed comparison between the notions of left modularity and extremality (and some related notions) in the setting of finite lattices, we refer to \cite{Mu}. 
We state the following result, which clarifies the connections between left modularity and extremality in the setting of finite lattices. 
For more recent studies on finite extremal semidistributive lattices, see \cite{Se} and the references therein.

\begin{theorem}[\cite{Mu,TW}]\label{Thm: trim=extremal=left modular}
Let $L$ be a finite semidistributive lattice. Then $L$ is left modular if and only if $L$ is extremal.
\end{theorem}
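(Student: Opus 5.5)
The plan is to reduce both directions to one element-wise characterization of left modularity via the $\kappa$-map, and then count labels along a maximal chain. Since $L$ is finite and semidistributive, it is a $\kappa$-lattice, so $\kappa:\JI(L)\to\MI(L)$ is a bijection; in particular $|\JI(L)|=|\MI(L)|$. I will use the standard labelling of covers. For a cover $y\lessdot z$ there is a unique $j\in\JI(L)$ with $j\le z$, $j\not\le y$ and $y\le\kappa(j)$. This $j$ is the minimum of $\{a\mid y\vee a=z\}$. Dually, $\kappa(j)$ is the unique $m\in\MI(L)$ with $y\le m$ and $z\not\le m$.

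\textbf{Key lemma.} For $x\in L$, $x$ is left modular if and only if, for every $j\in\JI(L)$, either $j\le x$ or $x\le\kappa(j)$.

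First suppose the condition holds, and suppose for contradiction that $y\le z$ but $y\vee(x\wedge z)<(y\vee x)\wedge z$. Choose a cover $u\lessdot w$ with $y\vee(x\wedge z)\le u\lessdot w\le(y\vee x)\wedge z$, and let $j$ be its label, so $j\le w$, $j\not\le u$ and $u\le\kappa(j)$.
\begin{itemize}
\item If $j\le x$, then $j\le x\wedge z\le u$, a contradiction.
\item Otherwise $x\le\kappa(j)$. Together with $y\le u\le\kappa(j)$ this gives $j\le w\le x\vee y\le\kappa(j)$, which is impossible.
\end{itemize}

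Conversely, let $x$ be left modular and $j\not\le x$. Apply left modularity to $j_*\le j$. Since $x\wedge j\le j_*$, we get $(j_*\vee x)\wedge j=j_*$. By maximality in the definition of $\kappa(j)$, this gives $x\le j_*\vee x\le\kappa(j)$.

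\textbf{Left modular $\Rightarrow$ extremal.} Let $\hat0=x_0\lessdot x_1\lessdot\cdots\lessdot x_m=\hat1$ be a maximal chain of left modular elements. For $j\in\JI(L)$, let $i$ be minimal with $j\le x_i$. Then $j\not\le x_{i-1}$, so the lemma gives $x_{i-1}\le\kappa(j)$. Hence $j$ is the label of $x_{i-1}\lessdot x_i$.

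This defines a map from $\JI(L)$ to the covers of the chain. It is injective because each cover has a unique label. It is surjective because labels are join-irreducible, and the label of $x_{i-1}\lessdot x_i$ is itself sent to that cover. Thus $|\JI(L)|=m$, and by $\kappa$ also $|\MI(L)|=m$.

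For maximality of the length, note that along any maximal chain the labels are pairwise distinct. Indeed, once $j\le x_i$, every later cover $x_{k-1}\lessdot x_k$ has $j\le x_{k-1}$, so $j$ cannot be its label. Hence every chain has length at most $|\JI(L)|=m$, and $L$ has length $m$. So $L$ is extremal.

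\textbf{Extremal $\Rightarrow$ left modular.} Take a maximal chain of length $m=|\JI(L)|=|\MI(L)|$. Its $m$ labels are distinct, so every $j\in\JI(L)$ labels exactly one cover $x_{k-1}\lessdot x_k$. Fix $x_i$ and $j$.
\begin{itemize}
\item If $k\le i$, then $j\le x_k\le x_i$.
\item If $k>i$, then $x_i\le x_{k-1}\le\kappa(j)$.
\end{itemize}
By the key lemma, every $x_i$ is left modular, so the chain witnesses that $L$ is left modular.

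The main obstacle is the key lemma, specifically the first direction: one must pick a cover strictly inside the interval between the two sides of the modular inequality and exploit its label. This is where finiteness, or more generally weak atomicity, is used to guarantee that such a cover exists. The other steps are label bookkeeping once uniqueness of cover labels in semidistributive lattices is available.
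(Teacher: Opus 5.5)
Your proof is correct. It is essentially the finite specialization of the paper's own Section~4 argument; for this finite statement the paper itself only cites \cite{Mu,TW}. Your key lemma is Proposition~\ref{Prop: Characterization of left modular}, with one difference: you prove ``dichotomy $\Rightarrow$ left modular'' directly, by labelling a cover inside the interval $[\,y\vee(x\wedge z),\,(y\vee x)\wedge z\,]$, rather than invoking the reduction to covers from the proof of \cite[Lemma 2.8]{TW}. Your label bookkeeping along the chain is the proof of Theorem~\ref{Thm: Equivalences for infinite lattices}(2) combined with the counting in Lemma~\ref{Lem: generalized extremal for finite lattices}.

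One harmless slip: your ``dual'' claim that $\kappa(j)$ is the unique $m\in\MI(L)$ with $y\le m$ and $z\not\le m$ is false without the extra condition $\kappa^{-1}(m)\le z$. In the pentagon $\hat0\lessdot a\lessdot b\lessdot\hat1$, $\hat0\lessdot c\lessdot\hat1$, the cover $\hat0\lessdot c$ has both $a$ and $b$ as meet-irreducibles above $\hat0$ but not above $c$. Nothing in your argument uses this claim, so the proof stands.
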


We finally recall that a finite lattice $L$ is called \emph{trim} if $L$ is both extremal and left modular. This family also gives another generalization of finite distributive lattices; an analogue of distributivity for ungraded lattices (for details, see \cite{Th}). 
We note that a trim lattice is not necessarily semidistributive. 
That being the case, the preceding theorem also gives a characterization of semidistributive trim lattice. More specifically, a finite semidistributive lattice $L$ is trim if and only if $L$ is left modular, if and only if $L$ is extremal.

\subsection{Torsion classes}\label{Subsec: Torsion Classes}
In the following, by $A$ we always denote a basic connected finite dimensional $k$-algebra which satisfies the assumptions from Section \ref{Sec: Summary of Main Results}. Moreover, we only work with subcategories $\mathcal{C}$ of $\modu A$ which are full and closed under isomorphism classes. Set $\mathcal{C}^\perp:=\{X \in \modu A\,|\, \Hom_A(C,X)=0, \text{ for each } C \in \mathcal{C}\}$. Moreover, ${}^\perp \mathcal{C}$ is defined dually. 
A subcategory $\mathcal{C}$ of $\modu A$ is \emph{extension-closed} if for every short exact sequence $0\rightarrow L \rightarrow M \rightarrow N \rightarrow 0$ in $\modu A$, provided $L$ and $N$ belong to $\mathcal{C}$, then $M \in \mathcal{C}$. 

For two subcategories $\mathcal{T}$ and $\mathcal{F}$ of $\modu A$, we say $(\mathcal{T},\mathcal{F})$ is a \emph{torsion pair} in $\modu A$ if the following conditions hold:
\begin{enumerate}
    \item $\mathcal{T}$ and $\mathcal{F}$ have only $0$ in common.
    \item $\mathcal{T}$ is closed under taking factors and $\mathcal{F}$ is closed under taking submodules.
    \item For each $M$ in $\modu A$, there exists a unique submodule $t(M)$ of $M$ such that $0 \rightarrow t(M) \rightarrow M \rightarrow M/t(M)\rightarrow 0$ is exact with $t(M) \in \mathcal{T}$ and $M/t(M) \in \mathcal{F}$.
\end{enumerate}

For a torsion pair $(\mathcal{T},\mathcal{F})$ in $\modu A$, we respectively call $\mathcal{T}$ and $\mathcal{F}$ as the \emph{torsion class} and the \emph{torsion-free class}. That being the case, each module in $\mathcal{T}$ is called a torsion module, and those modules belonging to $\mathcal{F}$ are known as torsion-free modules. Observe that $\mathcal{T}$ and $\mathcal{F}$ are extension-closed, and each one uniquely determines the pair. In particular,  $\mathcal{F}=\mathcal{T}^\perp$ and $\mathcal{T}= {}^\perp\mathcal{F}$ (for details, see \cite[Chapter VI.1]{ASS}). 
Obviously, $(0,\modu A)$ and $(\modu A, 0)$ form torsion pairs. Thus, we say $(\mathcal{T},\mathcal{F})$ is a non-trivial torsion pair if $\mathcal{T}\neq 0$ and $\mathcal{T}\neq \modu A$. 

\medskip

If $\mathcal{C}$ is a subcategory of $\modu A$, the smallest extension-closed subcategory of $\modu A$ that contains $\mathcal{C}$ is denoted by $\Filt\mathcal{C}$, and it consists of all modules in $\modu A$ which admit a filtration by the objects in $\mathcal{C}$. Moreover, let $\Fac\mathcal{C}$ (resp. $\Sub\mathcal{C}$) denote the full subcategory of $\modu A$ consisting of all $N$ in $\modu A$ that are quotients (resp. submodules) of a finite direct sum of objects in $\mathcal{C}$. 
Define 
$\mathcal{T(\mathcal{C}}):=\Filt(\Fac\mathcal{C})$ and $\mathcal{F(\mathcal{C}}):=\Filt(\Sub\mathcal{C})$.
Therefore, every $M$ in $\mathcal{T}(\mathcal{C})$ has a filtration $0= M_0 \subseteq M_1 \subseteq \cdots \subseteq M_{d-1} \subseteq M_d=M$ such that for each $1 \leq i \leq d$, there exists an epimorphism $\psi_i:C_i\twoheadrightarrow M_{i}/M_{i-1}$ for some $C_i$ that is a finite direct sum of objects from $\mathcal{C}$. The following lemma is well known. In particular, it implies that each torsion class is determined by the indecomposable modules it contains.

 \begin{lemma}\label{Smallest Torsion Class}
Let $\mathcal{C}$ be a subcategory of $\modu A$. The smallest torsion class in $\modu A$ that contains $\mathcal{C}$ is $\mathcal{T}(\mathcal{C})$, and the smallest torsion-free class containing $\mathcal{C}$ is $\mathcal{F}(\mathcal{C})$.
\end{lemma}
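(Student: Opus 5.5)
We must show two things: $\mathcal{T}(\mathcal{C})=\Filt(\Fac\mathcal{C})$ is a torsion class, and it is contained in every torsion class containing $\mathcal{C}$. The dual statement for $\mathcal{F}(\mathcal{C})=\Filt(\Sub\mathcal{C})$ follows by the same argument with submodules in place of quotients, or by applying the standard duality $D=\Hom_k(-,k)$ between $\modu A$ and $\modu A^{\rm op}$, which exchanges torsion and torsion-free classes, $\Fac$ and $\Sub$, and preserves $\Filt$.

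Minimality is the easy direction. Let $\mathcal{T}'$ be a torsion class with $\mathcal{C}\subseteq\mathcal{T}'$. Torsion classes are closed under finite direct sums (split extensions) and quotients, so $\Fac\mathcal{C}\subseteq\mathcal{T}'$. They are also extension-closed, so induction on the length $d$ of a filtration $0=M_0\subseteq\cdots\subseteq M_d=M$ with subquotients in $\Fac\mathcal{C}$ gives $\Filt(\Fac\mathcal{C})\subseteq\mathcal{T}'$.

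For the converse, I will use the standard criterion (\cite[Chapter VI.1]{ASS}) that a full subcategory of $\modu A$ closed under isomorphisms is a torsion class exactly when it is closed under quotients and extensions. Extension-closure of $\Filt(\Fac\mathcal{C})$ is immediate: given $0\to L\to M\to N\to 0$ with $L,N$ filtered, take the filtration of $L$ followed by the preimages in $M$ of the filtration of $N$.

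The main step is closure under quotients. Let $M\in\Filt(\Fac\mathcal{C})$ with filtration $(M_i)$, and let $\pi\colon M\twoheadrightarrow N$. Put $N_i:=\pi(M_i)$, which gives a filtration of $N$. The map $\pi$ induces a surjection $M_i/M_{i-1}\twoheadrightarrow N_i/N_{i-1}$, since $N_i/N_{i-1}=\pi(M_i)/\pi(M_{i-1})$ is the image of $M_i$. Each $M_i/M_{i-1}$ is a quotient of a finite direct sum $C_i$ of objects of $\mathcal{C}$, so composing gives $C_i\twoheadrightarrow N_i/N_{i-1}$, hence $N_i/N_{i-1}\in\Fac\mathcal{C}$ and $N\in\Filt(\Fac\mathcal{C})$.

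Existence of the torsion submodule $t(M)$ is the part of the definition of a torsion pair that genuinely needs checking. I take it from the cited criterion. If a self-contained argument is wanted, define $t(M)$ as the sum of all submodules of $M$ lying in $\mathcal{T}:=\Filt(\Fac\mathcal{C})$. This sum is finite because $M$ has finite length, and it lies in $\mathcal{T}$ because a sum of two submodules is a quotient of their direct sum. Then $M/t(M)\in\mathcal{T}^\perp$: if $f\colon T\to M/t(M)$ is nonzero with $T\in\mathcal{T}$, then the preimage of $f(T)$ is an extension of $f(T)$ by $t(M)$, so it lies in $\mathcal{T}$ and strictly contains $t(M)$, a contradiction. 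Finally $\mathcal{T}={}^\perp(\mathcal{T}^\perp)$: if $X\in{}^\perp(\mathcal{T}^\perp)$, the projection $X\to X/t(X)$ is zero, so $X=t(X)\in\mathcal{T}$.
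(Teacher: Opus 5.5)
The paper gives no proof of this lemma: it is quoted as well known, with the background on torsion pairs taken from \cite[Chapter VI.1]{ASS}. So there is no argument in the paper to compare yours against.

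Your proof is the standard one and it is correct. Minimality holds because torsion classes are closed under quotients and extensions, hence also under finite direct sums. $\Filt(\Fac\mathcal{C})$ is closed under extensions, by concatenating filtrations. It is closed under quotients by pushing the filtration forward along $\pi$, using the induced surjections $M_i/M_{i-1}\twoheadrightarrow \pi(M_i)/\pi(M_{i-1})$. Either the cited criterion or your construction of $t(M)$ as the largest submodule lying in $\mathcal{T}$ then finishes the argument. The torsion-free half follows from the duality $D=\Hom_k(-,k)$.

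One small addition would make your self-contained version match the paper's definition of a torsion pair, which asks for a \emph{unique} $t(M)$. Suppose $U\subseteq M$ satisfies $U\in\mathcal{T}$ and $M/U\in\mathcal{T}^\perp$. Then $U\subseteq t(M)$, and $t(M)/U$ lies in $\mathcal{T}\cap\mathcal{T}^\perp=0$, so $U=t(M)$.
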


Let $\tors A$ and $\torf A$, respectively, denote the set of all torsion classes and torsion-free classes in $\modu A$. Both sets form lattices with respect to the inclusion-order: For a subset $\{\mathcal{T}_i\}_{i \in I}$ in $ \tors A$ (respectively, $\{\mathcal{F}_i\}_{i \in I}$ in $\torf A$), the meet is given by $\bigwedge_{i\in I}\mathcal{T}_i = \bigcap_{i \in I}\mathcal{T}_i$ (respectively, $\bigwedge_{i\in I}\mathcal{F}_i = \bigcap_{i \in I}\mathcal{F}_i$). The join $\bigvee_{i\in I}\mathcal{T}_i$ is defined as the intersection of all torsion classes $\mathcal{T}\in \tors A$ which contain $\bigcup_{i\in I}\mathcal{T}_i$. The join $\bigvee_{i\in I}\mathcal{F}_i$ is defined analogously. Moreover, there is an anti-isomorphism of lattices $\tors A \to \torf A$, where $\mathcal{T}\in \tors A$ is mapped to $\mathcal{T}^{\bot} \in \torf A$, while, in the opposite direction, $\mathcal{F}$ is sent to $^{\bot}\mathcal{F}$. 
We also recall that a torsion class $\mathcal{T}$ is \emph{functorially finite} if $\mathcal{T}=\Fac(M)$, for some $M$ in $\modu A$.

\medskip

It is known that $\tors A$ (and $\torf A$) is a lattice that possesses many interesting features. Before we list some of these important properties in the next theorem, let us recall that $\brick A$ denotes the set of (isomorphism classes of) bricks in $\modu A$, which are those modules whose endomorphism algebra is a division algebra. Then, $A$ is called a \emph{brick-finite} algebra provided that $\modu A$ admits only finitely many bricks, up to isomorphism.

\medskip

We now list some of the important properties of $\tors A$, which also holds for $\torf A$. This theorem lists several results proved by various authors, including \cite{BCZ,DIJ, DI+,GM,RST}. Here we choose a more lattice theoretical presentation and for details, we refer to the aforementioned papers and the references therein. 

\newpage

\begin{theorem}\label{Thm: properties of tors(A)}
Let $A$ be an algebra. Then, 
\begin{enumerate}
    \item $\tors A$ is a complete lattice which is completely semidistributive lattice and weakly atomic. 
    \item There is a brick labelling of $\Hasse(\tors A)$, meaning that, each arrow in $\Hasse(\tors A)$ is uniquely labeled by an element of $\brick A$.
    \item If $\mathcal{T}_l \to \mathcal{T}_{l-1} \to \cdots \to \mathcal{T}_0$ is a path in $\Hasse(\tors A)$ and brick $X_i$ labels $\mathcal{T}_i \to \mathcal{T}_{i-1}$ for each $i \in \{1,\ldots,l\}$, then $\Hom_A(X_i,X_j)=0$ for any $i<j$.
    \item The three sets $\brick A$, $\JI^c(\tors A)$, and $\MI^c(\tors A)$ are in bijection.
    \item $\tors A$ is a finite lattice if and only if $A$ is brick-finite.
\end{enumerate}
\end{theorem}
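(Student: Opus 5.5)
The statement collects known facts, so I will not reprove the deep inputs. Instead I will organise the argument around one tool, the brick labelling of cover relations, and cite \cite{DIJ, DI+, BCZ, GM, RST} for the non-formal steps.

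\textbf{Part (1).} Completeness is formal. Arbitrary intersections of torsion classes are torsion classes, so $\bigwedge_i \mathcal{T}_i=\bigcap_i\mathcal{T}_i$. The join $\bigvee_i\mathcal{T}_i$ then exists as the meet of all upper bounds, and by Lemma \ref{Smallest Torsion Class} it equals $\mathcal{T}(\bigcup_i\mathcal{T}_i)$.

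For complete semidistributivity I would follow \cite{DI+}. Using the anti-isomorphism $\tors A\to\torf A$, it suffices to prove the meet version; the join version is the dual statement for torsion-free classes. Suppose $\mathcal{T}\wedge\mathcal{U}_i=\mathcal{V}$ for all $i$ and $M\in\mathcal{T}\cap\bigvee_i\mathcal{U}_i$. I would filter $M$ by quotients of modules in the $\mathcal{U}_i$. Applying the torsion radical of $\mathcal{V}$ together with closure of $\mathcal{T}$ under quotients and extensions, I would show inductively that $M\in\mathcal{V}$.

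Weak atomicity is the key lemma. Given $\mathcal{T}\subsetneq\mathcal{U}$, the class $\mathcal{U}\cap\mathcal{T}^\perp$ is nonzero. Choose a nonzero $X$ in it of minimal dimension; a minimality argument shows $X$ is a brick. Then $\mathcal{T}\subsetneq\mathcal{T}\vee\mathcal{T}(X)$, and I would show this inclusion is a cover, which yields $\mathcal{T}\le u\lessdot v\le\mathcal{U}$. I expect this to be the main obstacle. Showing that adjoining a minimal brick produces a cover, rather than a longer interval, requires the description of $\mathcal{T}\vee\mathcal{T}(X)$ as $\Filt(\mathcal{T}\cup\Fac X)$ and a careful analysis of its intersection with $\mathcal{T}^\perp$. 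This is essentially \cite[Theorem 3.4]{DI+} (see also \cite{BCZ}), and I would invoke it.

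\textbf{Parts (2) and (3).} For a cover $\mathcal{T}\lessdot\mathcal{U}$, the same analysis shows that $\mathcal{U}\cap\mathcal{T}^\perp$ contains a unique brick $B$ up to isomorphism, and that $\mathcal{U}=\Filt(\mathcal{T}\cup\{B\})$. Labelling the cover by $B$ gives (2).

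For (3), each label lies in the larger torsion class of its cover and in the torsion-free class of the smaller one. Along a path $\mathcal{T}_l\to\cdots\to\mathcal{T}_0$ these classes are nested. So for $i<j$, one of $X_i,X_j$ lies in a torsion class $\mathcal{T}_k$ and the other lies in $\mathcal{T}_k^\perp$. Hence the Hom space in the direction from the torsion module to the torsion-free module vanishes. I would fix the orientation convention of the Hasse arrows so that this direction is $\Hom_A(X_i,X_j)$; it is a one-line check.

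\textbf{Parts (4) and (5).} For (4), send a brick $B$ to $\mathcal{T}(B)$ and to ${}^\perp\mathcal{F}(B)$. Each is completely join-irreducible, respectively completely meet-irreducible: its unique lower cover is obtained by deleting the label $B$, namely $\mathcal{T}(B)_*=\mathcal{T}(B)\cap{}^\perp B$. Conversely, a completely join-irreducible $j$ has a unique lower cover $j_*\lessdot j$. Its label $B$ satisfies $\mathcal{T}(B)\le j$ and $\mathcal{T}(B)\not\le j_*$, forcing $j=\mathcal{T}(B)$. The meet side is dual.

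For (5), if $\brick A$ is finite then so is $\JI^c(\tors A)$. Since $\tors A$ is spatial by Theorem \ref{Thm:RST implications} together with (1), each element is a join of a subset of this finite set, so $\tors A$ is finite. Conversely, a finite lattice has finitely many join-irreducibles, so by (4) there are finitely many bricks (see \cite{DIJ}).
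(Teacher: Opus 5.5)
The paper gives no argument for this theorem. It only collects results from \cite{BCZ,DIJ,DI+,GM,RST}, so your plan of citing the same sources is in line with it. Most of your sketches are sound: completeness, meet-semidistributivity, and parts (3)--(5). For meet-semidistributivity the argument runs as follows. Pass to $N=M/t_{\mathcal{V}}M\in\mathcal{V}^\perp$. The top factor of a filtration of $N$ with factors in $\bigcup_i\mathcal{U}_i$ is a quotient of $N$, so it lies in $\mathcal{T}\cap\mathcal{U}_i=\mathcal{V}$ and must vanish; induction then kills $N$. Your orientation caveat in (3) is also correct. With the convention of Subsection \ref{Subsec: Lattice Theory} (an arrow $x\to z$ for $x\lessdot z$), your nesting argument gives $\Hom_A(X_j,X_i)=0$. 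So the statement as printed presupposes arrows pointing from the larger to the smaller torsion class.

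\textbf{The gap is in your weak atomicity step.} A nonzero module $X$ of minimal dimension in $\mathcal{U}\cap\mathcal{T}^\perp$ does \emph{not} in general give a cover $\mathcal{T}\lessdot\mathcal{T}\vee\mathcal{T}(X)$, and this is not what \cite{DI+} or \cite{BCZ} prove. A counterexample is already in Figure \ref{Fig. for A_2}. Take $\mathcal{T}=y=\mathrm{add}\,S_2$ and $\mathcal{U}=\hat 1$. Then $\mathcal{U}\cap\mathcal{T}^\perp=\mathrm{add}(S_1\oplus P_2)$ with $P_2=\begin{smallmatrix}2\\1\end{smallmatrix}$, so $X=S_1$ and $\mathcal{T}\vee\mathcal{T}(S_1)=\hat 1$. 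Yet $y\lessdot z\lessdot\hat 1$, so this inclusion is not a cover. Minimality only shows that $X$ is a brick in $\mathcal{T}^\perp$ all of whose proper quotients lie in $\mathcal{T}$. The cover criterion of \cite{BCZ,DI+} (minimal extending modules) also requires that every non-split extension $0\to X\to E\to T\to 0$ with $T\in\mathcal{T}$ has $E\in\mathcal{T}$. Here $0\to S_1\to P_2\to S_2\to 0$ violates it; the actual upper cover of $y$ is labelled by $P_2$, the module of larger dimension.

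Weak atomicity has a short proof that avoids this. Pick $M\in\mathcal{U}\setminus\mathcal{T}$ and set $\mathcal{U}'=\mathcal{T}(\mathcal{T}\cup\{M\})\subseteq\mathcal{U}$. A union of a chain of torsion classes is closed under quotients and extensions, so it is a torsion class. Zorn's lemma therefore gives a torsion class $\mathcal{W}$ maximal among those with $\mathcal{T}\subseteq\mathcal{W}\subseteq\mathcal{U}'$ and $M\notin\mathcal{W}$. Any torsion class $\mathcal{W}'$ with $\mathcal{W}\subsetneq\mathcal{W}'\subseteq\mathcal{U}'$ contains $M$ by maximality, hence equals $\mathcal{U}'$. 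So $\mathcal{T}\subseteq\mathcal{W}\lessdot\mathcal{U}'\subseteq\mathcal{U}$.

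A related point concerns (2). Under the hypothesis $\mathcal{T}\lessdot\mathcal{U}$, the minimal-dimension argument gives only the existence of a brick $B$ with $\mathcal{U}=\mathcal{T}(\mathcal{T}\cup\{B\})$. The uniqueness of $B$, which your proof of (4) relies on, is a separate result of \cite{BCZ,DI+}. It does not follow from ``the same analysis'', so you must cite it explicitly.
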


Let us remark that a single element of $\brick A$ may appear as the label of more than one arrow in $\Hasse(\tors A)$, even if $A$ is brick-finite.

\bigskip

\subsection{Brick-directed algebras}\label{Subsection:Brick-directed Algebras} 
Inspired by the lattice-theoretical approach to the study of representation theory of algebras, in \cite{AI+} we introduced modern analogues and novel generalizations of some important classical notions. Here we only present a brief summary of the results related to the content of this work, and for details and proofs, we refer to the aforementioned paper and references therein.

\medskip

As a brick-analogue of the classical splitting torsion pairs, we say that a torsion pair $(\mathcal{T},\mathcal{F})$ in $\modu A$ is \emph{brick-splitting} if, for every $M \in \brick A$, either $M \in \mathcal{T}$ or $M \in \mathcal{F}$. For such a pair, $\mathcal{T}$ is said to be a brick-splitting torsion class and $\mathcal{F}$ is called a brick-splitting torsion-free class. 
We remark that brick-splitting torsion pairs give a non-trivial generalization of splitting torsion pairs; More explicitly, for an algebra $A$, there can exist torsion pairs $(\mathcal{T},\mathcal{F})$ which are brick-splitting, but not splitting in the classical sense (see \cite[Example 3.12]{AI+}). 

\medskip


To better present the significance of brick-splitting torsion pairs in our work, particularly through the lens of a directedness property, we recall that a \emph{path} in $\modu A$ is a sequence $X_0 \xrightarrow{f_1} X_1 \xrightarrow{f_2}  \cdots X_{t-1} \xrightarrow{f_t}  X_{t}$, where $t$ is a positive integer, $X_0, \cdots , X_t$ are indecomposables, and for $1\leq i \leq t$, $f_i:X_{i-1}\rightarrow X_i$ is a non-zero non-invertible morphism in $\modu A$. Such a path is a \emph{cycle} if $X_0$ and $X_t$ are isomorphic. Note that in a path, some of the morphisms may belong to $\rad^{\infty}(A)$. In particular, a cycle $X_0\xrightarrow{f_1} X_1\xrightarrow{f_2}  \cdots \xrightarrow{f_{m-1}}  X_{m-1}\xrightarrow{f_m}  X_m=X_0$ in $\modu A$ is called a \emph{brick-cycle} if $X_i \in \brick A$, for all $0\leq i\leq m$. Following \cite{AI+}, we say $A$ is \emph{brick-directed} algebra if there exists no brick-cycle in $\modu A$. 
Observe that each representation-directed algebra is brick-directed and representation-finite, but there are a multitude of brick-directed algebras that are neither representation-directed nor representation-finite (see \cite[Section 4.3]{AI+}). 

\medskip

Before considering arbitrary algebras, we recall the following results on hereditary algebras (i.e., the path algebra of acyclic quivers) to put the new notions into perspective. As observed in \cite[Corollary 4.8]{AI+}, strictly wild algebras are never brick-directed. Meanwhile, for a hereditary algebra $A=kQ$, it is  well known that $A$ is wild if and only it is strictly wild (This equivalence, however, is not true for arbitrary algebras.). This partly explains why in the first two statements of the following proposition, only tame hereditary algebras occur. For more details, see \cite[Sections 3 \& 4]{AI+}, and references therein.

\newpage

\begin{proposition}{\cite[Prop. 3.10 \& 3.11 \& 4.9]{AI+}}
Let $A$ be a connected path algebra, that is $A=kQ$. Then,
\begin{enumerate}
    \item $A$ is brick-directed if and only if $Q$ is a Dynkin or the Kronecker quiver.
    \item If $A$ is of tame type, then a torsion pair $(\mathcal{T},\mathcal{F})$ is splitting if and only if it is brick-splitting.
    \item Let $(\mathcal{T},\mathcal{F})$ be a functorially finite torsion pair in $\modu A$. Then, $(\mathcal{T},\mathcal{F})$ is splitting if and only if it is brick-splitting.
\end{enumerate}
\end{proposition}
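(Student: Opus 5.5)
The three parts are proved separately, and in each direction we use that $A=kQ$ is hereditary. For all three, we would rely on two facts: an indecomposable module in $\modu A$ lies in $\mathcal{T}$ or in $\mathcal{F}$ whenever the torsion pair $(\mathcal{T},\mathcal{F})$ is splitting, and, conversely, over a hereditary algebra a torsion pair is splitting exactly when every indecomposable module lies in $\mathcal{T}\cup\mathcal{F}$. The direction ``splitting $\Rightarrow$ brick-splitting'' in (2) and (3) is then immediate, since bricks are indecomposable. So the work consists of (1) together with the reverse implications in (2) and (3).

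For (1), suppose first that $Q$ is Dynkin. Then $A$ is representation-directed, so it has no cycles of indecomposables at all, and in particular no brick-cycles. For the Kronecker quiver we would use the standard trichotomy of indecomposables into preprojective, regular and preinjective modules. Nonzero morphisms only go from preprojectives to regulars to preinjectives, and within the preprojective and preinjective components they respect the directed AR-order. The regular bricks are exactly the quasi-simples of the homogeneous tubes, and distinct ones have no nonzero morphisms between them. Hence any nonzero non-invertible path between bricks strictly increases in this order, and no brick-cycle exists.

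Conversely, suppose $Q$ is neither Dynkin nor Kronecker. If $Q$ is wild, then $A$ is strictly wild and hence not brick-directed by \cite[Corollary 4.8]{AI+}. If $Q$ is Euclidean with at least three vertices, then there is a tube of rank $r\ge 2$ with quasi-simples $E_1,\dots,E_r$, indexed so that $\tau E_{i+1}=E_i$. Regular modules of quasi-length $2\le r$ are bricks. The quasi-length-two module $E_i[2]$ has quasi-socle $E_i$ and quasi-top $E_{i+1}$. The inclusions $E_i\hookrightarrow E_i[2]$ and the epimorphisms $E_i[2]\twoheadrightarrow E_{i+1}$ are nonzero and non-invertible. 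Composing them gives the brick-cycle
\[
E_1\to E_1[2]\to E_2\to\cdots\to E_r[2]\to E_1 .
\]

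For (2), let $(\mathcal{T},\mathcal{F})$ be brick-splitting with $A$ tame. By the criterion above it suffices to place every non-brick indecomposable $X$ in $\mathcal{T}\cup\mathcal{F}$. Preprojective and preinjective indecomposables are directing, hence bricks, so $X$ is regular, lying in a tube $\mathcal{R}$. Since $\mathcal{R}$ is an extension-closed uniserial abelian subcategory, the torsion part $t(X)$ is a regular submodule of $X$ lying on the ray through $X$. Brick-splitting decides every quasi-simple and every brick of small quasi-length in $\mathcal{R}$. We would then show, by induction on quasi-length using closure of $\mathcal{T}$ under factors and of $\mathcal{F}$ under submodules, that $\mathcal{T}\cap\mathcal{R}$ and $\mathcal{F}\cap\mathcal{R}$ are unions of full ``wings''. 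The aim is to force $t(X)\in\{0,X\}$. A mixed situation would produce a brick, namely a subfactor of quasi-length $<r$, or $\delta$-dimensional in a homogeneous tube, lying neither in $\mathcal{T}$ nor in $\mathcal{F}$.

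For (3), write $\mathcal{T}=\Fac M$ with $M$ support $\tau$-tilting; over a hereditary algebra this is a partial tilting module completed by Bongartz, and $\mathcal{F}=M^{\perp}$ is functorially finite. Take an indecomposable $X$ with $0\ne t(X)\ne X$, chosen of minimal dimension. We would use the Happel--Ringel lemma, which holds since $A$ is hereditary: a nonzero morphism between indecomposables $U\to V$ with $\operatorname{Ext}^1(V,U)=0$ is mono or epi. Applying it to the indecomposable summands of $t(X)$ and of $X/t(X)$, we would try to produce either a smaller counterexample or a brick, namely the image of a suitable map from an Ext-projective summand of $M$ into $X$, that is neither torsion nor torsion-free. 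This last step is the main obstacle. Controlling endomorphisms of the constructed module in the wild case is delicate, and if the direct argument fails I would instead use Ingalls--Thomas to translate $\mathcal{T}$ into a wide subcategory $\Filt$ of bricks that is generated by an exceptional sequence. There the brick-splitting hypothesis forces each exceptional brick of the sequence to be in $\mathcal{T}$ or $\mathcal{F}$. The resulting semiorthogonal decomposition of $\modu A$ then gives splitting.
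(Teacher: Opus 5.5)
The paper gives no proof of this proposition; it is quoted from \cite{AI+}. So I judge your argument on its own. Part (1) is fine. Dynkin quivers are representation-directed, and your Kronecker argument via the preprojective/regular/preinjective trichotomy is correct. A Euclidean quiver with $n\ge 3$ vertices has a tube of rank $\ge 2$, since the ranks satisfy $\sum(r_i-1)=n-2$, and $E_1\to E_1[2]\to E_2\to\cdots\to E_r[2]\to E_1$ is a genuine brick-cycle. The wild case rests on \cite[Corollary 4.8]{AI+}, as the paper also indicates.

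Part (2), however, hinges on the claim that $t(X)$ is a regular submodule on the ray through $X$ ``because the tube is a uniserial abelian subcategory''. That does not follow: $t(X)$ is computed in $\modu A$ and may be preprojective. Over the Kronecker algebra, take $S$ simple projective and $S'$ simple injective. Then $(\operatorname{add}S,\operatorname{add}S')$ is a torsion pair, and the torsion part of every regular quasi-simple is $S$. The wing induction built on this claim is also never carried out.

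What you need is a short observation that avoids $t(X)$ altogether. In a tube of rank $r\ge 2$ with quasi-simples $E_1,\dots,E_r$, the brick $E_i[2]$ has submodule $E_i$ and quotient $E_{i+1}$. So $E_i\in\mathcal{T}$ and $E_{i+1}\in\mathcal{F}$ would leave $E_i[2]$ in neither class. Going around the tube cyclically, all quasi-simples of a tube therefore lie on the same side. Every module in the tube is filtered by them, so extension-closure of $\mathcal{T}$ and $\mathcal{F}$ puts the whole tube on that side. Since preprojectives and preinjectives are bricks, every indecomposable lies in $\mathcal{T}\cup\mathcal{F}$.

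Part (3) is not proved. You flag the decisive step as ``the main obstacle''. The Ingalls--Thomas fallback has no content: the wide subcategory attached to $\mathcal{T}$ lies inside $\mathcal{T}$, so its bricks are trivially in $\mathcal{T}\cup\mathcal{F}$. Nothing explains how a ``semiorthogonal decomposition'' would give $\operatorname{Ext}^1(\mathcal{F},\mathcal{T})=0$.

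The missing idea is to use heredity through the Auslander--Reiten formula. Write $\mathcal{T}=\Fac M$ with $M$ support $\tau$-tilting, hence rigid. As $\operatorname{Ext}^2=0$, the functor $\operatorname{Ext}^1(Y,-)$ preserves epimorphisms, so $\operatorname{Ext}^1(\mathcal{F},\mathcal{T})=0$ if and only if $\operatorname{Ext}^1(\mathcal{F},M)=0$. The formula $\operatorname{Ext}^1(Y,M_i)\cong D\Hom(\tau^{-1}M_i,Y)$ turns this into $\tau^{-1}M_i\in{}^{\perp}\mathcal{F}=\mathcal{T}$ for every non-injective indecomposable summand $M_i$. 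Both $M_i$ and $\tau^{-1}M_i$ are exceptional, hence bricks.

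Suppose $\tau^{-1}M_i\in\mathcal{F}$. Then $\Hom(M_i,\tau^{-1}M_i)=0$ and $\Hom(\tau^{-1}M_i,M_i)\cong D\operatorname{Ext}^1(M_i,M_i)=0$. Consider the almost split sequence $0\to M_i\to E\to\tau^{-1}M_i\to 0$. Every endomorphism of $E$ preserves $M_i$, and restriction $\operatorname{End}(E)\to\operatorname{End}(M_i)$ is injective. Indeed, an endomorphism killing $M_i$ factors through a map $\tau^{-1}M_i\to E$, which vanishes because the sequence does not split and $\Hom(\tau^{-1}M_i,M_i)=0$. Hence $\operatorname{End}(E)$ is a finite-dimensional domain and $E$ is a brick. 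But $E$ lies neither in $\mathcal{T}$ (it maps onto $\tau^{-1}M_i\neq 0$) nor in $\mathcal{F}$ (it contains $M_i$), contradicting brick-splitting. This is precisely where functorial finiteness, i.e.\ a rigid generator of $\mathcal{T}$, enters, and your sketch never pins it down.
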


In the following theorem, where we treat arbitrary algebras, we summarize some of the main results of \cite{AI+} and highlight the linkages between the notion of brick-directedness and the lattice-theoretical properties investigated in this work. 
We recall that for a connected algebra $A$, by $Q^b(A)$ we denote the \emph{brick quiver} of $A$, where the vertices of $Q^b(A)$ are in bijection with elements of $\brick A$, and for each pair of distinct (hence non-isomorphic) modules $X$ and $Y$ in $\brick A$, we put an arrow from $X$ to $Y$ if $\Hom_A(X,Y)\neq 0$.

\begin{theorem}{\cite[Theorems 1.4 \& 1.7 \& 4.1]{AI+}}\label{Thm: Characterization of brick-directed via maximal chain of brick-splitting pairs}
The following are equivalent:
\begin{enumerate}
    \item $A$ is brick-directed;
    \item There exists a maximal chain $\{\mathcal{T}_i\}_{i\in I}$ in $\tors A$ such that every torsion class $\mathcal{T}_i$ is brick-splitting.
    \item $Q^b(A)$ is acyclic.
\end{enumerate}
Moreover, if $A$ is brick-finite, then $A$ is brick-directed if and only if $\tors A$ is a left modular (equivalently, extremal) lattice. That being the case, $\LM(\tors A)$ consists of all brick-splitting torsion classes and it forms a distributive sublattice of $\tors A$. 
\end{theorem}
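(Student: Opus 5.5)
The equivalence $(1)\Leftrightarrow(3)$ should be essentially formal, so I would start there. A brick-cycle $X_0\to X_1\to\cdots\to X_m=X_0$ cannot have length one, because $\operatorname{End}_A(X_0)$ is a division algebra. So a brick-cycle contains a subcycle through pairwise non-isomorphic bricks, and this is an oriented cycle in $Q^b(A)$. Conversely, an oriented cycle in $Q^b(A)$ passes through distinct bricks. Any nonzero morphism between non-isomorphic bricks is non-invertible, so such a cycle is a brick-cycle.

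The main tool for linking (2) with (3) is a dictionary between brick-splitting torsion classes and successor-closed sets of vertices of $Q^b(A)$. For $\mathcal{T}\in\tors A$ write $B(\mathcal{T}):=\mathcal{T}\cap\brick A$. I would use the known fact that $\mathcal{T}=\mathcal{T}(B(\mathcal{T}))$ for every torsion class $\mathcal{T}$, i.e.\ every torsion class is determined by its bricks.

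First, suppose $\mathcal{T}$ is brick-splitting, $X\in B(\mathcal{T})$ and $\Hom_A(X,Y)\neq0$ for a brick $Y$. Then $Y\notin\mathcal{T}^\perp$, so $Y\in\mathcal{T}$; hence $B(\mathcal{T})$ is successor-closed. Conversely, let $S$ be successor-closed. Its complement is predecessor-closed, so every brick $Y\notin S$ satisfies $\Hom_A(S,Y)=0$, that is, $Y\in S^\perp=\mathcal{T}(S)^\perp$. Thus $\mathcal{T}(S)$ is brick-splitting with $B(\mathcal{T}(S))=S$. This gives an order isomorphism between brick-splitting torsion classes and successor-closed sets, under which arbitrary unions and intersections of such sets correspond to joins and meets.

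For $(3)\Rightarrow(2)$: acyclicity makes the transitive closure of $Q^b(A)$ a poset, and successor-closed sets are its up-sets. By Zorn's lemma I choose a maximal chain $\{S_i\}$ of up-sets, which is closed under unions and intersections and has consecutive members differing by a single brick. I then show that $\{\mathcal{T}(S_i)\}$ is a maximal chain in $\tors A$. Suppose $\mathcal{U}$ is comparable with every member. Let $S$ be the union of the $S_i$ with $\mathcal{T}(S_i)\subseteq\mathcal{U}$ and $S'$ the intersection of the $S_i$ with $\mathcal{T}(S_i)\supseteq\mathcal{U}$; these lie in the chain and $S\subseteq B(\mathcal{U})\subseteq S'$. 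Maximality of the chain of up-sets forces $S'\setminus S$ to have at most one element, so $B(\mathcal{U})\in\{S,S'\}$. Since torsion classes are determined by their bricks, $\mathcal{U}$ is already in the chain.

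For $(2)\Rightarrow(3)$: a maximal chain in a complete lattice is closed under arbitrary meets and joins. For a brick $X$, put $\mathcal{T}_X^+:=\bigcap\{\mathcal{T}_i\mid X\in\mathcal{T}_i\}$ and let $\mathcal{T}_X^-$ be the join of the $\mathcal{T}_i$ not containing $X$. Then $\mathcal{T}_X^-\lessdot\mathcal{T}_X^+$ is a cover in the chain. Both ends are brick-splitting, so $B(\mathcal{T}_X^+)\setminus B(\mathcal{T}_X^-)$ consists of the bricks lying in $\mathcal{T}_X^+\cap(\mathcal{T}_X^-)^\perp$, and I claim this is only the brick label of the cover, namely $X$; this uniqueness-of-label statement is where I expect the main care to be needed. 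Granting it, an arrow $X\to Y$ in $Q^b(A)$ gives $Y\in\mathcal{T}_X^+$ (by successor-closedness) and $Y\neq X$, hence $Y\in\mathcal{T}_X^-$. So $\mathcal{T}_Y^+\subsetneq\mathcal{T}_X^+$, and an oriented cycle would give a strictly decreasing cycle in a chain, which is absurd.

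For the final assertion, assume $A$ is brick-finite, so $\tors A$ is a finite semidistributive lattice. By Theorem~\ref{Thm: trim=extremal=left modular}, left modularity and extremality agree. I would use the finite-lattice characterization that $x$ is left modular if and only if $j\le x$ or $x\le\kappa(j)$ for each $j\in\JI(L)$. Under Theorem~\ref{Thm: properties of tors(A)}(4), $j$ corresponds to $\mathcal{T}(X)$ for a brick $X$, and $\mathcal{T}\le\kappa(j)$ amounts to $X\in\mathcal{T}^\perp$. Hence the left modular elements are exactly the brick-splitting torsion classes. Being in bijection with up-sets, which are closed under union and intersection, they form a distributive sublattice. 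Left modularity of $\tors A$ then becomes statement (2), and so is equivalent to brick-directedness.
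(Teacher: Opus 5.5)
Your proof is correct, and it takes a genuinely different route from the paper. The paper gives no proof of this statement: it is quoted from \cite{AI+} and only used later, in Section~5. So your argument is an independent derivation. It is essentially the module-theoretic specialization of the lattice theory the paper develops in Section~4.

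With $L=\tors A$, $j=\mathcal{T}(X)$ and $\kappa(j)={}^{\perp}X$, the labelling quiver $Q_L$ is exactly $Q^b(A)$, since $\mathcal{T}(X)\not\subseteq {}^{\perp}Y$ if and only if $\Hom_A(X,Y)\neq 0$. Under this identification:
\begin{itemize}
\item your dictionary between brick-splitting torsion classes and successor-closed sets is Theorem~\ref{Thm: bijection between LM(L) and successor-closed sets} combined with Proposition~\ref{Prop: Characterization of left modular};
\item your $(2)\Rightarrow(3)$ parallels Corollary~\ref{Cor: Q_L being acyclic};
\item your $(3)\Rightarrow(2)$ is the ``linear extension gives an extremal chain'' half of Theorem~\ref{Thm: labelling quiver, linear extensions and extremal chains}, with a Zorn-maximal chain of up-sets replacing the down-sets of a linear extension.
\end{itemize}

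What your route buys is directness. It needs only three standard torsion-theoretic inputs:
\begin{itemize}
\item every torsion class is generated by its bricks;
\item for a cover $\mathcal{U}\lessdot\mathcal{T}$ in $\tors A$, the subcategory $\mathcal{T}\cap\mathcal{U}^{\perp}$ contains exactly one brick;
\item $\kappa(\mathcal{T}(X))={}^{\perp}X$.
\end{itemize}
The last of these is not part of Theorem~\ref{Thm: properties of tors(A)}(4) as stated, which only asserts a bijection, so it should be cited separately. The paper's route instead isolates what is purely lattice-theoretic, so it holds for every weakly atomic completely semidistributive lattice.

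Three small points.
\begin{itemize}
\item The uniqueness of the label that you flag as delicate is exactly the second input above (the cover criterion of Demonet--Iyama--Reading--Reiten--Thomas). You only need to observe two things. First, consecutive members of a maximal chain are covers in $\tors A$, not merely in the chain. Second, $X\in(\mathcal{T}_X^-)^{\perp}$; this holds, for instance, because each chain member not containing $X$ is brick-splitting, or because the join of a chain of torsion classes is its union.
\item The claim that consecutive members of a maximal chain of up-sets differ by one element deserves its one-line proof. If $S\lessdot S'$, then $S\cup{\uparrow}x$ is an up-set strictly between $S$ and $S'$ or equal to $S'$, so it equals $S'$ for every $x\in S'\setminus S$. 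Hence any two such $x$ lie above each other and coincide by antisymmetry. This needs no maximal elements, so it works for infinite posets, and it is precisely where acyclicity enters.
\item Your last paragraph never uses brick-finiteness. Replace Theorem~\ref{Thm: trim=extremal=left modular} by Proposition~\ref{Prop: Characterization of left modular} and Proposition~\ref{Prop: LM(L) is distributive sublattice}. The same argument then gives the ``moreover'' part for arbitrary $A$, which is what the paper establishes in Proposition~\ref{Prop: tors A being left modular, extremal, and a chain}.
\end{itemize}
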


\bigskip

\section{Left modularity and extremality for well-separated $\kappa$-lattices}\label{Section: Left modularity and extremality for well-separated kappa-lattices}

Recall that for a lattice $(L,\leq )$, an element $x$ of $L$ is called \emph{left modular} if for any $y\leq z$, we have $(y\vee x)\wedge z=y\vee(x\wedge z)$. A finite lattice is said to be left modular if it has an inclusion-maximal chain of left modular elements. We also recall that a finite lattice is said to be extremal if the length of its longest chain equals the number of join-irreducibles and the number of meet-irreducibles. In this section, we aim to generalize the above notions to the setting of arbitrary lattices. 

\subsection{Generalization of left modularity and extremality}\label{Subsection: Generalization of left modularity and extremality for kappa-lattices}

Considering that the definition of left modularity is with respect to the property of the elements appearing on a maximal chain, the same definition can be adopted for arbitrary lattices. 

\begin{definition}\label{Def: Left modularity for arbitrary lattices}
A (possibly-infinite) lattice $L$ is called \emph{left modular} if it has an inclusion-maximal chain of left modular elements.
\end{definition}

As for extremality, however, the generalization is more technical; that is, one cannot simply adopt the same definition that applies to finite lattices. This is partly because, for infinite lattice, the length of a maximal chain and the number of join-irreducibles and meet-irreducibles are not well-defined. In that regard, we give the following definition.

\begin{definition}\label{Def: Extremal for arbitrary lattices}
Let $L$ be a lattice, possibly infinite, and $\mathcal{X}$ be a maximal chain in $L$. Then $\mathcal{X}$ is \emph{$\lambda$-extremal} if there exists a bijection $\lambda: {\JI^c}(L)\to {\MI^c}(L)$ such that for every $x \in \mathcal{X}$, if some $j \in {\JI^c}(L)$ satisfies $j\not\leq x$, then $x\leq \lambda(j)$; and furthermore, if $x\lessdot y$ for two elements $x,y$ of $\mathcal{X}$, then
\begin{enumerate}
    \item there is a unique $j \in {\JI^c}(L)$ with $j \leq y$ but $j \not \leq x$;
    \item there is a unique $m \in {\MI^c}(L)$ with  $x \leq m$ but $y \not \leq m$;
\end{enumerate}
and $\lambda(j)=m$. The lattice $L$ is called \emph{extremal} if it admits a $\lambda$-extremal chain for some bijection $\lambda$. When $\lambda$ is clear from the context, we often suppress it. In particular, if $L$ is a $\kappa$-lattice, unless otherwise stated, we assume that $\lambda=\kappa$.
\end{definition}

\begin{remark}\label{Rem: Technicality in extremality}
In the above definition, in order to guarantee the uniqueness of the $j\in \JI^c(L)$ and $m\in \MI^c(L)$ in parts (1) and (2), observe that the conditions should be stated in the extremal chain $\mathcal{X}$ in $L$, and not the entire lattice $L$. 
For instance, for the lattice $L$ in Figure \ref{Fig. for A_2}, we have a cover relation $x \lessdot \hat{1}$ and two (completely) join-irreducible elements, namely $j=y$ and $j=z$, with the property that $j \not\le x$ and $j \le \hat{1}$. 
\end{remark}

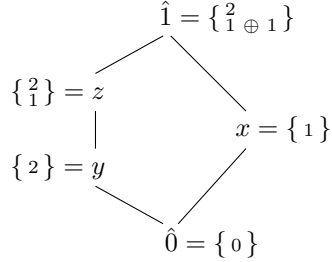
\begin{figure}[h]
\begin{center}
    \begin{tikzpicture}
\node at (0.75,4) {$\hat{1}= \{{\begin{smallmatrix} 2 & & \\1 &\oplus & 1 \end{smallmatrix}}\}$};
    \draw [-] (-0.1,3.75)--(-1,3.25); 
\node at (-1.5,3) {$\{{\begin{smallmatrix} 2\\1\end{smallmatrix}}\}=z$};
    \draw [-] (-1,2.75)--(-1,2.25); 
\node at (-1.5,2) {$\{{\begin{smallmatrix} 2\end{smallmatrix}}\}=y$};
    \draw [-] (-1,1.75)--(-0.1,1.25); 
\node at (0.55,1) {$\hat{0}=\{{\begin{smallmatrix} 0\end{smallmatrix}}\}$};;
    \draw [-] (1,2.25)--(0.1,1.25); 
\node at (1.5,2.5) {$x=\{{\begin{smallmatrix} 1\end{smallmatrix}}\}$};
    \draw [-] (0,3.75)--(1,2.75); 
    \end{tikzpicture}
\end{center}
\caption{Lattice of torsion classes of path algebra of $Q: 2\to 1$. Vertices are specified by $\tau$-rigid modules generating torsion classes.}\label{Fig. for A_2}
\end{figure}

To put our more general notion of extremality in Definition \ref{Def: Extremal for arbitrary lattices} into perspective, we compare it with the classical notion of extremality, only given for finite lattices (see Section \ref{Subsec: Lattice Theory}). 
In particular, the following lemma shows that our notion of extremality coincides with the classical one if we restrict to finite lattices. 

\begin{lemma}\label{Lem: generalized extremal for finite lattices} 
A finite lattice $L$ is extremal in the sense of Definition \ref{Def: Extremal for arbitrary lattices} if and only if it is extremal in the usual sense. \end{lemma}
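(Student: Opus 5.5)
For a finite lattice, every join-irreducible element is completely join-irreducible and every meet-irreducible element is completely meet-irreducible. So $\JI^c(L)=\JI(L)$ and $\MI^c(L)=\MI(L)$. Also, every maximal chain is finite, say $\hat{0}=x_0\lessdot x_1\lessdot\cdots\lessdot x_r=\hat{1}$. The plan is to compare both notions through the standard counting argument. For each cover $x_{i-1}\lessdot x_i$ in a maximal chain, define
\[J_i=\{j\in\JI(L)\mid j\le x_i,\ j\not\le x_{i-1}\},\qquad M_i=\{m\in\MI(L)\mid x_{i-1}\le m,\ x_i\not\le m\}.\]
Each $J_i$ is nonempty: $x_i$ is the join of the join-irreducibles below it, so some $j\le x_i$ satisfies $j\not\le x_{i-1}$. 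The sets $J_i$ are pairwise disjoint and cover $\JI(L)$: for each $j$, the index $i$ is the least one with $j\le x_i$, which exists since $j\le\hat 1$ and $j\not\le \hat 0$. Dually, the $M_i$ are nonempty and partition $\MI(L)$. This is exactly the argument behind $|\JI(L)|\ge r$ and $|\MI(L)|\ge r$.

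($\Rightarrow$) Suppose $\mathcal{X}$ is a $\lambda$-extremal maximal chain of length $r$. Conditions (1) and (2) of Definition \ref{Def: Extremal for arbitrary lattices} say $|J_i|=|M_i|=1$ for all $i$. The partitions then give $|\JI(L)|=|\MI(L)|=r$. It remains to show that $r$ is the length of $L$. For any chain of length $s$ we have $s\le|\JI(L)|$, by the same injection argument applied to a maximal refinement of that chain. Hence the length of $L$ is $r$, and $L$ is extremal in the usual sense.

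($\Leftarrow$) Suppose $L$ has length $n=|\JI(L)|=|\MI(L)|$. Pick a maximal chain of length $n$. Since $n$ nonempty disjoint sets $J_i$ partition an $n$-element set, each $J_i=\{j_i\}$ is a singleton, and likewise each $M_i=\{m_i\}$. Define $\lambda(j_i)=m_i$; this is a bijection $\JI(L)\to\MI(L)$, and (1), (2) and $\lambda(j)=m$ hold by construction. The remaining condition is this: for every $x_k$ in the chain and every $j=j_i\not\le x_k$, we need $x_k\le\lambda(j_i)=m_i$. Here $j_i\not\le x_k$ means $i>k$, so $x_k\le x_{i-1}\le m_i$. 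This holds because $m_i\in M_i$ lies above $x_{i-1}$, and the chain is increasing.

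The only mildly delicate points are that $J_i,M_i$ are nonempty and partition $\JI(L)$ and $\MI(L)$, and that in the forward direction the chosen chain actually attains the length of $L$. Both are handled by the partition argument, which I expect to be the main (though routine) step.
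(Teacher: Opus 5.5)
Your proof is correct and follows essentially the same counting argument as the paper. Along the $\lambda$-extremal chain, each cover adds exactly one join-irreducible and loses exactly one meet-irreducible, which forces $|\JI(L)|=|\MI(L)|=r$ to equal the length; conversely, a longest chain in a lattice that is extremal in the usual sense has singleton sets $J_i$, $M_i$, and these define $\lambda$. Your version is a bit more complete than the paper's: you count $\MI(L)$ directly through the partition $\{M_i\}$, and you explicitly check that $j_i\not\le x_k$ implies $x_k\le x_{i-1}\le\lambda(j_i)$, which the paper's converse leaves implicit.
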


\begin{proof} Let $L$ be a finite lattice $L$ which satisfies our definition of extremality in Definition \ref{Def: Extremal for arbitrary lattices}. 
Let $\hat 0 =x_0 \lessdot x_1 \lessdot \dots \lessdot x_r=\hat 1$ be the elements of a chain $\mathcal{X}$ in $L$ of maximal length. By induction, the number of join-irreducibles below $x_i$ is $i$, and so we see that the number of join-irreducibles equals the length of the chain $\mathcal{X}$. Thus, the number of join-irreducibles is no more than the length of the longest chain, but the number of join-irreducibles cannot be less than the length of the longest chain, so they are equal. Thanks the the bijection $\lambda: {\JI^c}(L)\to {\MI^c}(L)$, we know that there are also the same number of meet-irreducibles. This implies $L$ is extremal in the usual sense; that is, length of $L$ equals to $|{\JI(L)}|=|{\MI(L)}|$.

For the converse, let $L$ be a finite lattice $L$ that satisfies the usual definition of extremality. Assume $\hat 0 =x_0 \lessdot x_1 \lessdot \dots \lessdot x_r=\hat 1$ is a chain of maximal length, and therefore it is of length $|{\JI(L)}|=|{\MI(L)}|$. Suppose $\mathcal{X}$ denotes the elements of this chain. By extremality, below $x_{i+1}$ there is exactly one more join-irreducible comparing to the number of the join-irreducible elements below $x_i$. Similarly, there is exactly one less meet-irreducible above $x_{i+1}$ than above $x_i$. Using this observation, define $\lambda: {\JI^c}(L)\to {\MI^c}(L)$ to match up each join-irreducible to the corresponding meet-irreducible. Therefore, $L$ is extremal in the sense of Definition \ref{Def: Extremal for arbitrary lattices}.
\end{proof} 

\subsection{Equivalence of left modularity and extremality}\label{Subsection: Equivalence of left modularity and extremality for kappa-lattices}
In this subsection, we state two propositions which, together, verify the the first assertion of Theorem \ref{Thm: Equivalences for infinite lattices}. More explicitly, for a complete well-separated $\kappa$-lattice $L$, we show that $L$ is left modular if and only if $L$ is extremal (in the sense of Definitions \ref{Def: Left modularity for arbitrary lattices} and \ref{Def: Extremal for arbitrary lattices}).

\begin{proposition}\label{Prop:For well-separated kappa-lattices, left modular implies extremal}
Let $L$ be a left modular, well-separated $\kappa$-lattice. Then $L$ is extremal. \end{proposition}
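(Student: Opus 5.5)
The plan is to show that any inclusion-maximal chain $\mathcal{X}$ consisting of left modular elements, which exists because $L$ is left modular, is a $\kappa$-extremal chain, with $\lambda=\kappa$. Two facts about a $\kappa$-lattice will be used throughout. First, $\kappa(j)$ is the maximum of $\{y\mid y\wedge j=j_*\}$, so every $y$ with $y\wedge j=j_*$ satisfies $y\le\kappa(j)$. Dually, $\kappa^{-1}(m)$ is the minimum of $\{y\mid y\vee m=m^*\}$. Second, the defining identity of a left modular element is self-dual. So every statement proved below for join-irreducibles has a dual for meet-irreducibles, obtained by interchanging $\vee,\wedge$ and $j,\kappa(j)$.

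\emph{Step 1 (the separation condition).} Let $x\in\mathcal{X}$ and $j\in\JI^c(L)$ with $j\not\le x$. Apply left modularity of $x$ to the pair $j_*\le j$:
\[(j_*\vee x)\wedge j = j_*\vee(x\wedge j).\]
Since $j\not\le x$, we have $x\wedge j<j$, hence $x\wedge j\le j_*$, and the right-hand side equals $j_*$. Therefore $(j_*\vee x)\wedge j=j_*$, which gives $j_*\vee x\le\kappa(j)$, and in particular $x\le\kappa(j)$. This is the first requirement of Definition~\ref{Def: Extremal for arbitrary lattices}. The dual argument shows: if $m\in\MI^c(L)$ satisfies $x\not\le m$, then $\kappa^{-1}(m)\le x$.

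\emph{Step 2 (existence and uniqueness on covers).} Let $x\lessdot y$ in $\mathcal{X}$. Since $L$ is spatial, $y=\bigvee_{j\le y}j$, so some $j\in\JI^c(L)$ satisfies $j\le y$ and $j\not\le x$. (Alternatively, well-separatedness applied to $y\not\le x$ gives such a $j$ with, moreover, $x\le\kappa(j)$.)

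For uniqueness, suppose $j_1,j_2$ both have this property. The first task is to show $j_2{}_*\le x$. Step 1 gives $(j_2{}_*\vee x)\wedge j_2=j_2{}_*\neq j_2$, so $j_2{}_*\vee x\ne y$. Because $x\le j_2{}_*\vee x\le y$ and $x\lessdot y$, this forces $j_2{}_*\vee x=x$, that is, $j_2{}_*\le x$.

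Step 1 also gives $x\le\kappa(j_1)$, so $j_2{}_*\le\kappa(j_1)$. On the other hand, $j_2\not\le\kappa(j_1)$: otherwise $y=x\vee j_2\le\kappa(j_1)$, which would give $j_1\le\kappa(j_1)$, impossible. Hence $j_2{}_*\le\kappa(j_1)\wedge j_2<j_2$, and therefore $\kappa(j_1)\wedge j_2=j_2{}_*$. By maximality this yields $\kappa(j_1)\le\kappa(j_2)$. By symmetry $\kappa(j_2)\le\kappa(j_1)$, so $\kappa(j_1)=\kappa(j_2)$, and injectivity of $\kappa$ gives $j_1=j_2$. Call this unique element $j$.

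\emph{Step 3 (matching the meet-irreducible).} The dual of Step 2 gives a unique $m\in\MI^c(L)$ with $x\le m$ and $y\not\le m$. It remains to check that $\kappa(j)=m$. Step 1 gives $x\le\kappa(j)$, and $y\not\le\kappa(j)$ because $j\le y$ and $j\not\le\kappa(j)$. Thus $\kappa(j)$ is itself an element of the kind whose uniqueness was just established, so $\kappa(j)=m$. Together with the facts that $\kappa$ is a bijection $\JI^c(L)\to\MI^c(L)$ and that $\mathcal{X}$ is maximal, this shows $\mathcal{X}$ is $\kappa$-extremal, so $L$ is extremal.

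The main obstacle is the uniqueness in Step 2, and specifically the inequality $j_2{}_*\le x$. That is exactly where the cover relation $x\lessdot y$ must be combined with left modularity of $x$. I expect the well-separated hypothesis to enter only in a supporting role: producing the join-irreducibles in Step 2, and guaranteeing that $\kappa(j)$ genuinely acts as the maximum of $\{y\mid y\wedge j=j_*\}$.
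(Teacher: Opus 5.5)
Your proof is correct. Its skeleton is the paper's: Step~1 is exactly the paper's first paragraph. The crux of uniqueness, that left modularity of $x$ together with $x\lessdot y$ forces $j'_*\le x$ for any competing $j'$, is also the paper's key move.

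The differences are in how the pieces are closed off:
\begin{enumerate}
\item For existence you use spatiality instead of well-separatedness.
\item For uniqueness you apply maximality of $\kappa(j_2)$ to $\kappa(j_1)\wedge j_2=j_{2*}$ and then symmetrize. The paper instead establishes $j_*\le x$ and $y\le m^*$ for its well-separating $j$, deduces $m\vee j'=m^*$, and uses minimality of $\kappa^{-1}(m)=j$ to get $j\le j'_*$. That contradicts left modularity via $x\vee j'_*=y$.
\item You verify the meet-irreducible half and the matching $\kappa(j)=m$ explicitly. The paper leaves these to ``the dual argument''.
\end{enumerate}

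What your version buys is that well-separatedness is never used, so your closing expectation is slightly off. The maximum property of $\kappa(j)$ is part of the definition of a $\kappa$-lattice, not a consequence of well-separation. You have in fact shown that every left modular $\kappa$-lattice is extremal. The same holds for the paper's argument, because the $j$ it extracts from well-separation (with $j\le y$ and $x\le\kappa(j)$) is supplied equally well by spatiality plus Step~1. The place where well-separatedness does real work in the paper is the converse direction, extremal implies left modular.
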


\begin{proof}
Let $\mathcal{X}$ be an inclusion-maximal chain of left modular elements in $L$, and suppose $x\in \mathcal{X}$. 
Since $L$ is a $\kappa$-lattice, we will take $\kappa: \JI^c (L) \to \MI^c (L)$ to be our bijection. Let $j$ be a completely join-irreducible with $j\not\leq x$. Then we have $(x\wedge j)\vee j_*=j_*$. By left modularity of $x$, we have
  $(x\vee j_*)\wedge j=j_*$. Let $m=\kappa(j)$. By the definition of $\kappa$, we have $x\vee j_*\leq m$. So $x\leq m$. Thus, for every completely join-irreducible $j\not\leq x$, we have $\kappa(j)\geq x$.

Now, we must check that if $x\lessdot y$ are two elements of $\mathcal{X}$, there exists a unique join-irreducible less than or equal to $y$ which is not less than or equal to $x$. The fact that $L$ is well-separated guarantees that there is a join-irreducible $j$ with $j\leq y$ and $m=\kappa(j)\geq x$. Since $\kappa(j)$ is not greater than $j$, $j\not\leq x$. So $j$ is a join-irreducible that satisfies the conditions. We must check that there is no other such join-irreducible element to verify the uniqueness. 

If $j_*$ were not less than $x$, then $x\vee j_*$ would have to be greater than $x$, and it must be less than or equal to $y$, so it would have to be equal to $y$. But then $m\vee y=m\vee x \vee j_*$, and since $m\geq x$ and $m\geq j_*$, we would have $m\vee y=m$. But this is a contradiction. So we must have $j_*\leq x$.

The dual argument shows that $m^*\geq y$. So we have the following picture

$$\begin{tikzpicture}
  \node (a) at (0,0) {$m^*$};
  \node (b) at (-1,-.5) {$m$};
  \node (c) at (0.5,-1.5) {$y$};
  \node (d) at (-.5,-2) {$x$};
  \node (e) at (1,-3) {$j$};
  \node (f) at (0,-3.5) {$j_*$};
  \draw (a)--(b);
  \draw (c)--(d);
  \draw (e)--(f);
  \draw[dotted] (e) -- (c); \draw [dotted] (c)--(a);
  \draw [dotted] (f)--(d);\draw[dotted] (d)--(b);\end{tikzpicture}$$
where the straight lines denote cover relations, whereas the dotted lines denote order relations which need not be covers.

Now let $j'$ be another complete join-irreducible such that $x\vee j'=y$.
Thus
$m\vee j'=m^*$. We know that the minimum element $z$ satisfying $m\vee z=m^*$ is
$j$. So $j'>j$, and $j'_*\geq j$. Now $j\wedge x<j$, so $j\wedge x \leq j_*$. Thus
$(x \wedge j)\vee j_*=j_*$. On the other hand, since $j'_*\not\leq x$, we must have $j'_*\vee x>x$. But we also see that $j'_*\vee x\leq y$. So $j'_*\vee x=y$.
Thus $(x\vee j'_*)\wedge j'=j'$. This contradicts the left modularity of $x$ and implies the desired uniqueness, which finishes the proof.
\end{proof}

Next, we show the other implication.

\begin{proposition} \label{Prop:For well-separated kappa-lattices, extremal implies left modular}
Let $L$ be an extremal well-separated $\kappa$-lattice. Then $L$ is left modular. \end{proposition}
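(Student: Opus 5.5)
The plan is to show directly that every element of a $\kappa$-extremal maximal chain $\mathcal{X}$ is left modular. In that case $\mathcal{X}$ is itself an inclusion-maximal chain of left modular elements, and $L$ is left modular in the sense of Definition~\ref{Def: Left modularity for arbitrary lattices}. Following the convention in Definition~\ref{Def: Extremal for arbitrary lattices}, we take the bijection to be $\lambda=\kappa$. The only ingredient used is the first condition in the definition: for each $x\in\mathcal{X}$ and each $j\in\JI^c(L)$, either $j\le x$ or $x\le\kappa(j)$. We also use the standard fact that $j\not\le\kappa(j)$, which holds because $j\wedge\kappa(j)=j_*<j$.

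Fix $x\in\mathcal{X}$ and elements $y\le z$, and set $a=(y\vee x)\wedge z$ and $b=y\vee(x\wedge z)$. The modular inequality already gives $b\le a$, so it suffices to derive a contradiction from $a\not\le b$. In that case well-separatedness gives some $j\in\JI^c(L)$ with $j\le a$ and $b\le\kappa(j)$. We then split according to the dichotomy above.

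\emph{Case $j\le x$.} Since $j\le a\le z$, we get $j\le x\wedge z\le b\le\kappa(j)$. This contradicts $j\not\le\kappa(j)$.

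\emph{Case $x\le\kappa(j)$.} We have $y\le b\le\kappa(j)$, so $y\vee x\le\kappa(j)$. But $j\le a\le y\vee x$, so again $j\le\kappa(j)$, a contradiction.

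Hence $a\le b$, so $x$ is left modular. The cover conditions (1) and (2) of extremality are not needed for this direction.

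The one point that needs care is the choice of bijection. If one insists on an arbitrary bijection $\lambda$ rather than $\kappa$, the argument above requires exactly $j\not\le\lambda(j)$, so that would be the main obstacle. I would try to establish it by locating the cover $x'\lessdot y'$ in $\mathcal{X}$ at which $j$ first lies below the chain. Condition (1) of extremality would identify $j$ as the unique new completely join-irreducible at that cover, and condition (2) would give a completely meet-irreducible $m=\lambda(j)$ with $y'\not\le m$; from this one would conclude $j\not\le\lambda(j)$. With the paper's standing convention $\lambda=\kappa$ for $\kappa$-lattices, however, this issue does not arise, and the proof reduces to the two-case argument above.
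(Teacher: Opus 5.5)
Your proof is correct and is essentially the paper's argument. The paper likewise applies well-separatedness to $(x\vee y)\wedge z\not\le y\vee(x\wedge z)$ and derives both $j\not\le x$ and $x\not\le\kappa(j)$, which contradicts the first extremality condition; your two-case split is the same reasoning run directly.

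One small caveat concerns only your closing aside about an arbitrary bijection $\lambda$. There, the second case would need $y\le\lambda(j)$, while well-separatedness only gives $y\le\kappa(j)$, so what is really required is $\lambda(j)=\kappa(j)$, not merely $j\not\le\lambda(j)$. This does not affect your proof, since both you and the paper use the convention $\lambda=\kappa$.
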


\begin{proof} Let $\mathcal{X}$ be the inclusion-maximal chain whose existence is guaranteed by extremality in Definition \ref{Def: Extremal for arbitrary lattices}. It suffices to show all elements of $\mathcal{X}$ are left modular. 

For the sake of contradiction, let $x$ be an element of $\mathcal{X}$ such that for some $y\leq z$ in $L$ we have $(x\vee y)\wedge z > (x\wedge z)\vee y$. 
Then, by well-separation property of $L$, there is some join-irreducible $j$ such that $j\leq (x\vee y)\wedge z$ and $m=\kappa(j)\geq (x\wedge z)\vee y$.
So $j\leq (x\vee y)$ and $j\leq z$. 

Since $m\geq (x\wedge z)\vee y$, but $j\not\leq m$, we must have $j\not\leq (x\wedge z)\vee y$. Thus, $j\not\leq (x\wedge z)$. But we already have $j\leq z$. Hence, $j\not\leq x$.
By a dual argument, we show $m\not\geq x$. This violates the extremality of $L$ and implies the desired contradiction.
  \end{proof}

Propositions \ref{Prop:For well-separated kappa-lattices, left modular implies extremal} and \ref{Prop:For well-separated kappa-lattices, extremal implies left modular}, together, imply the important equivalence claimed in the beginning of this subsection, which we state in the following theorem. In particular, this verifies part (1) of Theorem \ref{Thm: Equivalences for infinite lattices}.

\begin{theorem}\label{Thm: for well-sep kappa lattice, left modular equi to extremal}
Let $L$ be a well-separated $\kappa$-lattice. Then, $L$ is left modular if and only if $L$ is extremal.
\end{theorem}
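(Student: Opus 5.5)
The theorem is the conjunction of Propositions \ref{Prop:For well-separated kappa-lattices, left modular implies extremal} and \ref{Prop:For well-separated kappa-lattices, extremal implies left modular}, so the plan is simply to invoke them in the two directions. Let $L$ be a well-separated $\kappa$-lattice, and take $\lambda=\kappa$ throughout, as stipulated in Definition \ref{Def: Extremal for arbitrary lattices} for $\kappa$-lattices.

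For the forward direction, assume $L$ is left modular in the sense of Definition \ref{Def: Left modularity for arbitrary lattices}, and fix an inclusion-maximal chain $\mathcal{X}$ of left modular elements. Proposition \ref{Prop:For well-separated kappa-lattices, left modular implies extremal} shows that $\mathcal{X}$ is $\kappa$-extremal. Concretely, left modularity of each $x\in\mathcal{X}$ gives $x\le\kappa(j)$ whenever $j\not\le x$. For each cover $x\lessdot y$ in $\mathcal{X}$, well-separatedness produces a completely join-irreducible $j$, left modularity forces it to be unique, and the dual argument gives a unique $m$ with $m=\kappa(j)$. Hence $L$ is extremal.

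For the converse, assume $L$ is extremal. With $\lambda=\kappa$, Proposition \ref{Prop:For well-separated kappa-lattices, extremal implies left modular} shows that every element of the $\kappa$-extremal chain $\mathcal{X}$ is left modular. The argument runs as follows. A failure $(x\vee y)\wedge z>(x\wedge z)\vee y$ would, by well-separatedness, yield some $j\in\JI^c(L)$ with $j\not\le x$ and $x\not\le\kappa(j)$, contradicting extremality. Since $\mathcal{X}$ is a maximal chain in $L$ consisting of left modular elements, it is in particular an inclusion-maximal chain of left modular elements, so $L$ is left modular.

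The one point needing care is the bijection. Definition \ref{Def: Extremal for arbitrary lattices} allows an arbitrary bijection $\lambda$, but Proposition \ref{Prop:For well-separated kappa-lattices, extremal implies left modular} uses $\kappa$. This is resolved by the convention that extremality of a $\kappa$-lattice is taken with respect to $\lambda=\kappa$, so the converse direction applies verbatim. With that, the theorem follows.
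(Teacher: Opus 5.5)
Your proposal is correct and follows the paper's proof exactly: the paper also obtains the theorem as the conjunction of Proposition \ref{Prop:For well-separated kappa-lattices, left modular implies extremal} and Proposition \ref{Prop:For well-separated kappa-lattices, extremal implies left modular}, and your sketches of both directions match its arguments. Your treatment of the bijection also agrees with the paper: its converse direction likewise relies on the convention in Definition \ref{Def: Extremal for arbitrary lattices} that extremality of a $\kappa$-lattice is taken with $\lambda=\kappa$.
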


\bigskip

\section{Left modularity and extremality for weakly atomic completely semidistributive lattices}\label{Section: Left modularity and Extremality for weakly atomic lattices}

Here we focus on (complete) weakly atomic completely semidistributive lattices. In particular, in Subsection \ref{Subsection: Left modularity in weakly atomic completely semidistributive lattices}, we study the left modular elements of such lattices. Subsequently, we treat the notion of left modularity for this family and prove Theorem \ref{Thm: left modular elements in infinite lattices}. 
In Subsection \ref{Subsection: Extremality in weakly atomic completely semidistributive lattices}, we investigate the extremality of these lattices and give a less technical realization of extremality in this setting. Moreover, we prove part (2) of Theorem \ref{Thm: Equivalences for infinite lattices}.
In Subsection \ref{Subsection: The labelling quiver}, we introduce the labelling quivers and successor-closed sets, which gives a different approach to the study of left modularity and extremality in this setting, then verify Theorems \ref{Thm: LM(L) and successor-closed sets in Introduction} and \ref{Thm: labelling quiver, linear extensions and extremal chains in Introduction}.

\subsection{Left modularity in weakly atomic completely semidistributive lattices}\label{Subsection: Left modularity in weakly atomic completely semidistributive lattices}

In this section, we begin our treatment of (possibly infinite but complete) weakly atomic completely semidistributive lattices. In particular, we study the left modular elements (and consequently, left modularity) for such lattices.

\medskip

The following lemma is helpful and they are freely used below. 

\begin{lemma}\label{Lemma: cover relation and join-irred}
    If $x \lessdot y$ is a cover relation in $L$, then there is a unique $j \in \JI^c(L)$ with $x \vee j = y$ and $y \wedge \kappa(j)=x$. This element is the label of the cover relation.
\end{lemma}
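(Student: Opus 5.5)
The plan is to produce $j$ by complete semidistributivity applied to the set of elements joining $x$ up to $y$, and then to get the identity $y\wedge\kappa(j)=x$ from the meet-semidistributive side. Throughout, $L$ is a weakly atomic completely semidistributive lattice. By Theorem \ref{Thm:RST implications} it is then bi-spatial and a $\kappa$-lattice, so $\kappa:\JI^c(L)\to\MI^c(L)$ is a bijection with $\kappa(j)=\max\{z\mid z\wedge j=j_*\}$.

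\emph{Existence.} Set $S=\{a\in L\mid x\vee a=y\}$; it contains $y$, so it is nonempty. All elements of $S$ have the same join with $x$, so complete join-semidistributivity gives $x\vee\bigwedge S=y$. Hence $j:=\bigwedge S$ is the minimum of $S$.

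We next show $j\in\JI^c(L)$. Suppose $j=\bigvee T$ with every $t\in T$ strictly below $j$. Each such $t$ lies outside $S$. Since $x\le x\vee t\le y$ and $x\lessdot y$, this forces $x\vee t=x$, i.e. $t\le x$. Then $j=\bigvee T\le x$, which gives $y=x\vee j=x$, a contradiction. So $j$ is completely join-irreducible, and $j_*$ is the join of all elements strictly below $j$. In particular $j_*\le x$, and also $j\not\le x$.

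\emph{The meet identity.} Since $j\not\le x$, we have $x\wedge j\le j_*$, and combined with $j_*\le x$ this gives $x\wedge j=j_*$. By the definition of $\kappa(j)$ as a maximum, $x\le\kappa(j)$. Then $x\le y\wedge\kappa(j)\le y$. If $y\le\kappa(j)$ held, we would get $j\le\kappa(j)$, whence $j=j\wedge\kappa(j)=j_*$, which is absurd. The cover relation therefore forces $y\wedge\kappa(j)=x$.

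\emph{Uniqueness.} This is the delicate step. Suppose $j'\in\JI^c(L)$ also satisfies $x\vee j'=y$ and $y\wedge\kappa(j')=x$. Minimality of $j$ in $S$ gives $j\le j'$. If $j<j'$, then $j\le j'_*$. I would also show $j'_*\le x$: indeed $j'_*\wedge\kappa(j')=j'_*\wedge j'\wedge\kappa(j')=j'_*$, so $j'_*\le\kappa(j')$, and also $j'_*\le j'\le y$. Hence $j'_*\le y\wedge\kappa(j')=x$. This gives $j\le j'_*\le x$, contradicting $j\not\le x$. Therefore $j'=j$.

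The main obstacle is making sure the completely join-irreducible argument is airtight and that the definition of $\kappa$ is used correctly in the uniqueness step. With the $\kappa$ condition included in the hypothesis on $j'$, uniqueness goes through as above. A second, dual route is also available: take $m=\max\{b\mid y\wedge b=x\}$ and then identify $m=\kappa(j)$.
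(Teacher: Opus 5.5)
Your proof is correct. The existence half follows the paper, which also takes $j=\min\{a\mid a\vee x=y\}$, gets $x\wedge j=j_*$, and uses the cover to force $y\wedge\kappa(j)=x$. The paper imports the minimum and the fact $j_*\le x$ from \cite[Lemma 3.11]{RST}; you instead derive both from complete join-semidistributivity and the cover relation, which makes the argument self-contained. You also read off $x\le\kappa(j)$ directly from the maximality defining $\kappa(j)$, where the paper detours through semidistributivity. Because your $j$ is by construction $\min\{a\mid x\vee a=y\}$, it is the label of $x\lessdot y$ in the sense of \cite[Lemma 3.11]{RST}; you should say this explicitly to cover the last sentence of the lemma.

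The real difference is in uniqueness. The paper passes from $j\le j'$ to $\kappa(j')\le\kappa(j)$ and shows $j'_*\le x$ and $y\le\kappa(j')^*$. It then uses complete meet-irreducibility of $\kappa(j')$ to exclude $\kappa(j')<\kappa(j)$, and implicitly needs injectivity of $\kappa$ to get back to $j'=j$. You stay entirely on the join side: minimality gives $j\le j'$, and $j<j'$ would force $j\le j'_*\le y\wedge\kappa(j')=x$, contradicting $j\not\le x$. Your route is shorter and uses neither injectivity nor any monotonicity of $\kappa$. Avoiding monotonicity is worthwhile, because the paper's step ``$j\le j'$, hence $\kappa(j)\ge\kappa(j')$'' is not automatic: $\kappa$ is not order-reversing in general. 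In the pentagon $\hat0<a<b<\hat1$, $\hat0<c<\hat1$, one has $\kappa(a)=c$ and $\kappa(b)=a$. In context that step needs the extra check $\kappa(j')\wedge j=(y\wedge\kappa(j'))\wedge j=x\wedge j=j_*$.
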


\begin{proof}
    Existence. Let $j:=j(x \lessdot y)$ denote the completely join-irreducible labelling of that edge. From \cite[Lemma 3.11]{RST}, it follows that $j=\min \{a \in L \mid a \vee x = y\}$, and it has the property that $j_* \le x$. Consequently, $j \vee x = y$.

    We need to check that $x \le \kappa(j)$. Since $x \wedge j = j_* = \kappa(j) \wedge j$, semidistributivity implies $j_* = j \wedge (x \vee \kappa(j))$. By definition of $\kappa(j)$, this yields $x \vee \kappa(j) \leq \kappa(j)$, and therefore $x \leq \kappa(j)$.

    From above, we have $x \leq y \wedge \kappa(j) \leq y$. If $y = y \wedge \kappa(j)$, then $y \le \kappa(j)$, leading to $j \leq \kappa(j)$, a contradiction. Consequently, $x \lessdot y$ implies $x = \kappa(j)\wedge y$.

    Uniqueness. Assume that $j'$ is another completely join irreducible element with $x \vee j' = y$ and $y \wedge \kappa(j')=x$. By the defining property of $j:= j(x \lessdot y)$, it follows that $j \le j'$, hence $\kappa(j) \ge \kappa(j')$. Since $j' \le y$, we get $j'_*=\kappa(j') \wedge j' \le \kappa(j') \wedge y = x$. Similarly, $\kappa(j') \ge x$ gives $\kappa(j')^*=\kappa(j') \vee j' \ge x \vee j' = y$. 
    Since $\kappa(j')$ is completely meet irreducible, if $\kappa(j') < \kappa(j)$, then $y \le \kappa(j')^* \le \kappa(j)$, a contradiction.    
\end{proof}

Before showing another handy lemma, let us emphasize that in the previous proof, semidistributivity is essential.

\begin{lemma}\label{Lem: join-irr cover relation}
    If there is $j \in \JI^c(L)$ with $x \vee j = y$ and $y \wedge \kappa(j)=x$, then $x \lessdot y$.
\end{lemma}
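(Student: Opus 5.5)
Assume $j \in \JI^c(L)$ satisfies $x \vee j = y$ and $y \wedge \kappa(j) = x$. We prove $x \lessdot y$ directly: first show $x<y$, then take an arbitrary $z$ with $x \le z \le y$ and show that $z = x$ or $z = y$. Throughout, $L$ is the weakly atomic completely semidistributive lattice of this section, hence a $\kappa$-lattice by Theorem \ref{Thm:RST implications}. We use that $\kappa(j)$ is the largest element $u$ with $u \wedge j = j_*$, and that $j \not\le \kappa(j)$ (otherwise $j = j\wedge\kappa(j) = j_*$).

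First we show $x \neq y$. Since $j \le y$, if $x = y$ then $j \le x = y\wedge\kappa(j) \le \kappa(j)$, which is impossible. Hence $x < y$.

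Now fix $z$ with $x \le z \le y$. We split according to whether $j \le z$.

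\emph{Case $j \le z$.} Then $y = x \vee j \le z \le y$, so $z = y$.

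\emph{Case $j \not\le z$.} Then $z \wedge j < j$, and since $j$ is completely join-irreducible, $z\wedge j \le j_*$. We want to conclude $z \le \kappa(j)$, and this is the one substantive step. Because $x \le z$, we have $j_* = \kappa(j)\wedge j$ and $x \le \kappa(j)$ (as $x = y\wedge \kappa(j)$). The aim is $(z \vee j_*) \wedge j = j_*$, which would give $z \vee j_* \le \kappa(j)$ by the maximality defining $\kappa(j)$, and hence $z \le \kappa(j)$. To obtain it, note $j_* \le z\vee j_*$, and $(z\vee j_*)\wedge j \le j$ is not equal to $j$, since otherwise $j \le z \vee j_*$. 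Ruling out $j \le z\vee j_*$ is where the argument needs care. The cleaner route is to establish $j_* \le z$ directly, so that $z \vee j_* = z$. We get this from $j_* \le x$: we have $j_* = j\wedge\kappa(j) \le y \wedge \kappa(j) = x$, using $j \le y$. Therefore $z\vee j_* = z$ and $z \wedge j \le j_*$; combined with $j_* \le z\wedge j$ this gives $z\wedge j = j_*$. By the definition of $\kappa(j)$ as a maximum, $z \le \kappa(j)$. Consequently $z \le y\wedge\kappa(j) = x$, and so $z = x$.

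In both cases $z \in \{x,y\}$, so $x \lessdot y$. The only points needing care are the observation $j_* \le x$ and the use of the maximum property of $\kappa(j)$; no semidistributivity beyond the existence of $\kappa$ is needed.
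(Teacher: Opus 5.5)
Your proof is correct and is essentially the paper's argument: you show $x\neq y$, observe $j_*=j\wedge\kappa(j)\le y\wedge\kappa(j)=x$, deduce $z\wedge j=j_*$ for an intermediate $z$ with $j\not\le z$, and conclude $z\le y\wedge\kappa(j)=x$. Your explicit case $j\le z\Rightarrow z=y$ spells out a step the paper leaves implicit.

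The only difference is how you get $z\le\kappa(j)$. You take it directly from $\kappa(j)$ being the maximum of $\{u\in L\mid u\wedge j=j_*\}$, whereas the paper first invokes semidistributivity to obtain $j\wedge(z\vee\kappa(j))=j_*$. That detour is only needed if $\kappa(j)$ were merely a maximal element, so your remark that semidistributivity is not otherwise used is accurate.
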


\begin{proof}
Clearly, we have $x \le y$. If $x = y$, then $j \le x \le \kappa(j)$, a contradiction. Let $z$ be such that $x < z < y$. Because $j \le y$, we get $j_*=j \wedge \kappa(j) \le y \wedge \kappa(j) = x$. As $j_* \le x \le z$, we have $z \wedge j = \kappa(j) \wedge j = j_*$. By semidistributivity, we get $j_* = j \wedge (z \vee \kappa(j))$, so that $z \vee \kappa(j) \le \kappa(j)$ and $z \le \kappa(j)$. Then $z \le y \wedge \kappa(j) = x$, which is a contradiction.
\end{proof}

\medskip

The following proposition was originally inspired by the setting of the lattice of torsion classes and in terms of brick-splitting torsion pairs (see \cite[Prop. 3.4]{AI+}). More specifically, in the more general setting treated in this section, we give a characterization of left modular elements in complete lattices which are weakly atomic completely semidistributive. 
We recall that for every such lattice $L$, there is a bijection $\kappa:{\JI^c}(L)\to {\MI^c}(L)$ such that $m:=\kappa(j)=\max \{x\in L \,|\, x\wedge j= j_*\}$ which satisfies $m\wedge j=j_*$ and $m\vee j=m^*$ (see Section \ref{Subsec: Lattice Theory} and references therein). 

\begin{proposition}\label{Prop: Characterization of left modular}
Let $L$ be a weakly atomic and completely semidistributive lattice.
Then, for $t\in L$, the following conditions are equivalent:
\begin{enumerate}
    \item $t$ is left modular.
    \item For each $j\in{\JI^c}(L)$, we have $j\le t$ or $t\le \kappa(j)$, but not both.
\end{enumerate}
\end{proposition}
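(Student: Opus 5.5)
The "not both" part is automatic: if $j\le t\le\kappa(j)$ then $j\le\kappa(j)$, which is impossible because $j\wedge\kappa(j)=j_*<j$. So the real content of (2) is the dichotomy $j\le t$ or $t\le\kappa(j)$, and I will prove the two implications separately.

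\emph{(1)$\Rightarrow$(2).} This follows the first paragraph of the proof of Proposition \ref{Prop:For well-separated kappa-lattices, left modular implies extremal}. Let $j\in\JI^c(L)$ with $j\not\le t$, and apply left modularity of $t$ to the pair $j_*\le j$. Since $t\wedge j<j$, we have $t\wedge j\le j_*$, so
\[(j_*\vee t)\wedge j=j_*\vee(t\wedge j)=j_*.\]
By the definition $\kappa(j)=\max\{x\mid x\wedge j=j_*\}$, this gives $j_*\vee t\le\kappa(j)$, hence $t\le\kappa(j)$. Here we use Theorem \ref{Thm:RST implications}, which says $L$ is a $\kappa$-lattice, so this maximum exists.

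\emph{(2)$\Rightarrow$(1).} This is the main step, because $L$ need not be well-separated, so the argument of Proposition \ref{Prop:For well-separated kappa-lattices, extremal implies left modular} cannot be copied directly. Weak atomicity will replace well-separation.

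Suppose $t$ is not left modular. Then there are $y\le z$ with $a:=y\vee(t\wedge z)<b:=(y\vee t)\wedge z$. By weak atomicity there is a cover $u\lessdot v$ with $a\le u\lessdot v\le b$. Lemma \ref{Lemma: cover relation and join-irred} gives $j\in\JI^c(L)$ with $u\vee j=v$ and $v\wedge\kappa(j)=u$, so $j\le v\le b$, $u\le\kappa(j)$, and $j\not\le\kappa(j)$. We now apply (2) to this $j$.

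\emph{Case $j\le t$.} Since $j\le b\le z$, we get $j\le t\wedge z\le a\le u\le\kappa(j)$, a contradiction.

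\emph{Case $t\le\kappa(j)$.} Since $y\le a\le u\le\kappa(j)$, we get $y\vee t\le\kappa(j)$, so $j\le b\le y\vee t\le\kappa(j)$, again a contradiction.

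Both cases are contradictory, so $t$ is left modular. The only non-elementary inputs are weak atomicity and Lemma \ref{Lemma: cover relation and join-irred}, which produce a $j$ separating $b$ from $a$ in the sense $j\le b$, $a\le\kappa(j)$. This local well-separation is exactly what the argument needs.
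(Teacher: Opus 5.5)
Your proof is correct and follows the paper's argument. The implication (1)$\Rightarrow$(2) is identical. For (2)$\Rightarrow$(1), the paper likewise uses weak atomicity to pass to a cover relation, then applies the dichotomy to its label $j$ and $\kappa(j)$ from Lemma \ref{Lemma: cover relation and join-irred}. The only difference is that the paper outsources the reduction to cover relations to the proof of \cite[Lemma 2.8]{TW}, whereas you find the cover $u\lessdot v$ directly inside $[\,y\vee(t\wedge z),\,(y\vee t)\wedge z\,]$, which makes your argument self-contained.
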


\begin{proof} 
Before we show the implications, observe that, by Theorem \ref{Thm:RST implications}, the lattice $L$ is bi-spatial (equivalently, a $\kappa$-lattice).

(1)$\Rightarrow$(2) Assume $j\ {\not\leq}\ t$, then $j>t\wedge j$ and hence $j_*\ge t\wedge j$. Since $t$ is left modular, we have
\[(j_*\vee t)\wedge j=j_*\vee(t\wedge j)=j_*.\] 
Thus $j_*\vee t\ {\not\geq}\ j$. By definition of $\kappa$, we have $\kappa(j) \ge j_*\vee t\ge t$, as desired.

(2)$\Rightarrow$(1) 
By the proof of \cite[Lemma 2.8]{TW} and weakly atomic property of $L$, in order to prove that $t$ is left modular, it suffices to prove that for a cover relation $z\lessdot y$, we have 
$$(*):\quad (y\wedge t)\vee z= y \wedge (t\vee z).$$
Observe that the containment from left to right is clear. Let $j\in{\JI^c}(L)$ be a join-irreducible labelling of $z\lessdot y$, and $m:=\kappa(j)\in{\MI^c}(L)$.
Then we have $j\vee z=y$ and $m\wedge y=z$ (see Lemma \ref{Lemma: cover relation and join-irred}). By assumption, either $j \leq t$ or $t \leq m$. If $j \leq t$, we see that
$$(y\wedge t)\vee z\geq j\vee z= y\geq y \wedge (t\vee z).$$
Assume now that $t \leq m$. 
Since $t\vee z\leq m$, we have
$$y \wedge (t\vee z) \leq y\wedge m = z\leq(y\wedge t) \vee z.$$
Therefore, $(*)$ also holds in this case.
\end{proof}

We derive some interesting consequences of the above proposition, and moreover compare our characterization of the left modular elements in our setting with some earlier results over finite lattices. First, we show the following statement.

\begin{proposition}\label{Prop: left modular and cover-relation equivalences}
Let $L$ be a weakly atomic completely semidistributive lattice, $y \lessdot z$ a cover relation labeled by $j \in \JI^c(L)$, and 
$m := \kappa(j)$. 
Then, for each $t \in L$:
\begin{enumerate}
    \item $j \leq t$ implies $t \vee y = t \vee z$.
    \item $t \leq m$ implies $t \wedge y = t \wedge z$.
    \item $j \leq t$ and $t \leq m$ cannot both hold.
\end{enumerate}
The converses of $(1)$, $(2)$ and $(3)$ hold, provided that $t\in L$ is a left modular element.
Consequently, the following are equivalent:

\begin{itemize}
    \item [(i)] The element $t$ is left modular in $L$.
    \item [(ii)] For any cover relation $y\lessdot z$ in $L$, we have $t \vee y = t \vee z$ or $t \wedge y = t \wedge z$ but not both.
\end{itemize}
\end{proposition}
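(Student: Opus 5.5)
We rely on Lemma \ref{Lemma: cover relation and join-irred}: for the cover $y\lessdot z$ labelled by $j$ we have $y\vee j=z$, $z\wedge m=y$, $j_*\le y$, $y\le m$, $m\wedge j=j_*$ and $m\vee j=m^*$. The three forward statements are then direct computations.

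For (1), if $j\le t$ then $t\vee z=t\vee y\vee j=t\vee y$. For (2), if $t\le m$ then $t\wedge z=t\wedge m\wedge z=t\wedge y$. For (3), if $j\le t\le m$ then $j\le m=\kappa(j)$, which contradicts $j\wedge m=j_*<j$.

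For the converses we assume $t$ is left modular. By Proposition \ref{Prop: Characterization of left modular}, exactly one of $j\le t$ and $t\le m$ holds. Hence the converse of (3), namely that at least one holds, is immediate.

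For the converse of (1), suppose $t\vee y=t\vee z$ but $j\not\le t$. Then $t\le m$, so the computation for (2) gives $t\wedge y=t\wedge z$. Left modularity applied to $y\le z$ now yields
\[z=(y\vee t)\wedge z=(z\vee t)\wedge z\]
on the left-hand side, while the right-hand side is $y\vee(t\wedge z)=y\vee(t\wedge y)=y$. This gives $z=y$, a contradiction. So $j\le t$.

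The converse of (2) is symmetric. If $t\wedge y=t\wedge z$ but $t\not\le m$, then $j\le t$, so $t\vee y=t\vee z$, and the same identity again forces $y=z$.

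It remains to prove (i)$\Leftrightarrow$(ii). For (i)$\Rightarrow$(ii), take a cover $y\lessdot z$ with label $j$. Exactly one of $j\le t$ and $t\le m$ holds, so by (1) or (2) at least one of the equalities holds. The key observation is that both equalities cannot hold for any $t$. Indeed, if $t\vee y=t\vee z$ and $t\wedge y=t\wedge z$, then left modularity (or just the modular inequality together with left modularity) gives $y=z$ as computed above. Alternatively, the converses give $j\le t$ and $t\le m$, contradicting (3). This gives ``not both''.

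For (ii)$\Rightarrow$(i), we verify condition (2) of Proposition \ref{Prop: Characterization of left modular}. Take $j\in\JI^c(L)$. It labels the cover $j_*\lessdot j$, and $\kappa(j)=m$ is the associated element. Suppose $j\not\le t$. Then $t\vee j_*=t\vee j$ is impossible, since it would give $j\le t\vee j_*$; this is the step where care is needed. Our plan here is to use the cover $m\lessdot m^*$ instead, which is also labelled by $j$ since $m\vee j=m^*$ and $m^*\wedge m=m$. By (ii), either $t\vee m=t\vee m^*$ or $t\wedge m=t\wedge m^*$. In the second case, if $t\le m^*$ then $t\le m$, and we are done.

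To avoid the cases where $t\not\le m^*$, we instead argue directly that $t$ is left modular, which is cleaner. Following the proof of Proposition \ref{Prop: Characterization of left modular}, by \cite[Lemma 2.8]{TW} and weak atomicity it suffices to check $(z\wedge t)\vee y=z\wedge(t\vee y)$ for each cover $y\lessdot z$. If $t\vee y=t\vee z$, the right side is $z$, so we need $z\le (z\wedge t)\vee y$. The left side lies in $[y,z]$ and hence equals $y$ or $z$. If it equals $y$, then $z\wedge t\le y$, so $t\wedge z=t\wedge y$, and both equalities hold, contradicting (ii). Symmetrically, if $t\wedge y=t\wedge z$, then the left side is $y$, and the right side lies in $[y,z]$. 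If the right side were $z$, then $z\le t\vee y$, giving $t\vee z=t\vee y$, again contradicting (ii). Hence $(*)$ holds in every case, and this last case analysis is the main step.
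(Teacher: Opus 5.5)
Your proof is correct, and the forward parts (1)--(3) are the same computations as in the paper. The differences are in the converses and in (ii)$\Rightarrow$(i).

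\textbf{Converses of (1) and (2).} Both you and the paper first use Proposition~\ref{Prop: Characterization of left modular} to get $t\le m$ (resp.\ $j\le t$). The paper then contradicts the label relations directly: $z\le t\vee y\le m$ against $m\wedge z=y$, resp.\ $j\le t\wedge z=t\wedge y\le y$ against $j\vee y=z$. You instead feed the conclusion back into (2) (resp.\ (1)). You then use the general fact that a left modular $t$ cannot satisfy both $t\vee y=t\vee z$ and $t\wedge y=t\wedge z$ for $y<z$, since the left modular identity would then read $z=y$. This isolates a purely lattice-theoretic fact, and it also gives the ``not both'' in (i)$\Rightarrow$(ii) directly. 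It requires $t$ to be left modular, so your phrase ``for any $t$'' should read ``for any left modular $t$''.

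\textbf{(ii)$\Rightarrow$(i).} The paper only says the equivalence ``immediately follows from the above implications''. That is not literally the case for this direction, because the converses presuppose left modularity. Your reduction to covers via \cite[Lemma 2.8]{TW} and weak atomicity, followed by the two-case check, supplies a complete argument. It is essentially the argument in the proof of Proposition~\ref{Prop: Characterization of left modular}.

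\textbf{Clean-up needed.} Your abandoned first attempt at (ii)$\Rightarrow$(i) contains an invalid step: $j\le t\vee j_*$ does not by itself contradict $j\not\le t$. Either delete that paragraph or repair it. The repair actually gives a shorter proof:
\begin{itemize}
\item If $j\not\le t$, then $t\wedge j\le j_*$, so $t\wedge j_*=t\wedge j$.
\item The ``not both'' part of (ii), applied to the cover $j_*\lessdot j$, then forces $t\vee j_*\neq t\vee j$, that is, $j\not\le t\vee j_*$.
\item Hence $(t\vee j_*)\wedge j=j_*$, and $t\le t\vee j_*\le\kappa(j)$ by the definition of $\kappa$.
\end{itemize}
This is condition (2) of Proposition~\ref{Prop: Characterization of left modular}, so you can cite that proposition rather than re-running the reduction to covers.
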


\begin{proof}
We first note that, from the assumptions, we get $j \vee y = z$ and $j_* \leq y$. Moreover, $m \wedge z = y$ holds. These are captured in the following picture, where the straight lines denote cover relations, and the dotted lines denote the order relations (which are not necessarily cover relations).
$$\begin{tikzpicture}
  \node (a) at (0,0) {$m^*$};
  \node (b) at (-1,-.5) {$m$};
  \node (c) at (0.5,-1.5) {$z$};
  \node (d) at (-.5,-2) {$y$};
  \node (e) at (1,-3) {$j$};
  \node (f) at (0,-3.5) {$j_*$};
  \draw (a)--(b);
  \draw (c)--(d);
  \draw (e)--(f);
  \draw[dotted] (e) -- (c); \draw [dotted] (c)--(a);
  \draw [dotted] (f)--(d);\draw[dotted] (d)--(b);\end{tikzpicture}$$

To verify $(1)$, observe that if $j \leq t$, then $z = j \vee y \leq t \vee y$. Hence, $t \vee z \leq t \vee y$. Since $y \leq z$ gives
$t \vee y \leq t \vee z$, we have the desired equality $t \vee y = t \vee z$.

For $(2)$, assume that $t \leq m$. Using this and $y = m \wedge z$, we get $t \wedge z \leq m \wedge z = y$. This yields $t \wedge z \leq t \wedge y$. From $y \leq z$, we get $t \wedge y \leq t \wedge z$, thus $t \wedge y = t \wedge z$.

To show $(3)$, it is clear that if both $j \leq t$ and $t \leq m$ hold simultaneously, then $j \leq m = \kappa(j)$, so
$j \wedge \kappa(j) = j > j_*$, which contradicts the fact that $\kappa(j) \wedge j = j_*$.

\medskip

Now we assume $t\in L$ is left modular and verify the converse implications of $(1)$ and $(2)$. Observe that the converse of $(3)$ follows from Proposition \ref{Prop: Characterization of left modular}. 

For the converse of $(1)$, suppose $t \vee y = t \vee z$. Hence, $z\leq t \vee y$. If $j \not\leq t$, then by Prop. \ref{Prop: Characterization of left modular}, we get $t \leq \kappa(j) = m$. 
Since $m \wedge z = y$, we have $y \leq m$. Together with $t \leq m$, we get $t \vee y \leq m$. Consequently, we get  $z\leq t \vee y \leq m$. 
However, $z \leq y \vee t \leq m$ contradicts
$m \wedge z = y \lessdot z$.

For the converse of $(2)$, assume $t \wedge y = t \wedge z$. If $t \not\leq m$, then by Prop. \ref{Prop: Characterization of left modular}, we get $j \leq t$. 
Observe that $j\vee y=z$ implies $j \leq z$. Together with $j\leq t$ and $t \wedge y = t \wedge z$, this implies $j\leq t \wedge y$, hence $j\leq y$. Therefore, $j\vee y=y$, which contradicts $j\vee y=z$.

\medskip
Finally, we note that the equivalence of (i) and (ii) immediately follows from the above implications.
\end{proof}

Thanks to the previous propositions, in the next remark we make a comparison between the characterizations of the left modular elements in our setting and an analogous result for finite lattices.

\begin{remark}\label{Rem: comparison with LS}
We first recall that if $L$ is a finite semidistributive lattice, then $\JI^c(L) = \JI(L)$. 
That being the case, Prop. \ref{Prop: Characterization of left modular} asserts that $t \in L$ is left modular if and only if for each $j \in \JI(L)$, either $j \leq t$ or $t \leq \kappa(j)$ but not both. 
Moreover, by Prop. \ref{Prop: left modular and cover-relation equivalences}, this is further equivalent to the following condition: for each cover $y \lessdot z$, exactly one of 
$t \vee y = t \vee z$ and $t \wedge y = t \wedge z$ holds. Observe that the latter equivalence is exactly condition (iii) of \cite[Theorem~1.4]{LS}, which characterizes the left modular elements in a finite lattice. 
Hence, for finite semidistributive $\kappa$-lattices, the equivalence shown in Prop. \ref{Prop: left modular and cover-relation equivalences} coincides with that of \cite[Theorem~1.4]{LS}. 
However, our results also applies to a subfamily of infinite lattices, namely those which are weakly atomic and completely semidistributive. 
A more accurate comparison between the two mutually exclusive conditions is presented in the following table:
\begin{center}
\begin{tabular}{c|c}
    \emph{\cite[Theorem 1.4(iii)]{LS}} & \emph{Proposition \ref{Prop: Characterization of left modular}(2)} \\
    \hline 
    $t \vee z = t \vee y$ & $j \leq t$ \\
    $t \wedge z = t \wedge y$ & $t \leq \kappa(j)$ \\
\end{tabular}
\end{center}

Finally, let us emphasize that the passage from finite to infinite lattices requires replacing $\JI(L)$ with $\JI^c(L)$, particularly because in an infinite lattice, an arbitrary element of $\JI(L) \setminus \JI^c(L)$ does not necessarily have a unique lower cover, it is not in the domain of $\kappa$, and it does not label any cover relation. 
The $\kappa$-formulation of Prop. \ref{Prop: Characterization of left modular} is therefore the canonical 
generalization of \cite[Theorem~1.4(iii)]{LS} to the infinite weakly atomic 
completely semidistributive setting, whereas the formulation of Prop. \ref{Prop: left modular and cover-relation equivalences} is analogous to that of \cite[Theorem~1.4(iii)]{LS}.
\end{remark}

Recall that, for an arbitrary lattice $L$, by $\LM(L)$ we denote the set of all left modular elements of $L$. 
The following result shows that, in the setting of the previous propositions, $\LM(L)$ inherits some important lattice-theoretical properties. 
In retrospect, the following result can be seen as the generalization of \cite[Prop. 2.6]{TW} and \cite[Cor. 5.3]{AI+} from finite lattices to infinite lattices. Also, compare with \cite[Theorem 1.1]{AI+} and Theorem \ref{Thm: Characterization of brick-directed via maximal chain of brick-splitting pairs}.

\begin{proposition}\label{Prop: LM(L) is distributive sublattice}
Let $L$ be a weakly atomic and completely semidistributive lattice. 
Then $\LM(L)$ is a complete sublattice of $L$ and it is completely distributive.
\end{proposition}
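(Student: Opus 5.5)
The plan is to use the characterization in Proposition \ref{Prop: Characterization of left modular}. An element $t\in L$ is left modular if and only if, for every $j\in\JI^c(L)$, exactly one of $j\le t$ and $t\le\kappa(j)$ holds. Since $j\le\kappa(j)$ never holds, the ``not both'' part is automatic. So the condition is just that each $j$ satisfies $j\le t$ or $t\le\kappa(j)$. This lets us check closure under arbitrary joins and meets directly.

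\emph{Closure under joins.} Let $\{t_i\}_{i\in I}\subseteq\LM(L)$ and $t=\bigvee_i t_i$, and fix $j\in\JI^c(L)$. If $t_i\le\kappa(j)$ for every $i$, then $t\le\kappa(j)$. Otherwise $t_{i_0}\not\le\kappa(j)$ for some $i_0$, so the criterion forces $j\le t_{i_0}\le t$. Hence $t$ is left modular.

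\emph{Closure under meets.} This is the dual argument: if $j\le t_i$ for all $i$, then $j\le\bigwedge_i t_i$; otherwise some $t_{i_0}\le\kappa(j)$, which gives $\bigwedge t_i\le\kappa(j)$. The empty join $\hat 0$ and empty meet $\hat 1$ are trivially left modular. Therefore $\LM(L)$ is a complete sublattice, with joins and meets computed as in $L$.

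\emph{Complete distributivity.} Consider the map $\varphi(t)=\{j\in\JI^c(L)\mid j\le t\}$. By spatiality (Theorem \ref{Thm:RST implications}) we have $t=\bigvee\varphi(t)$, so $\varphi$ is injective and order-preserving and order-reflecting. The step I expect to be the main obstacle is showing that $\varphi$ turns arbitrary joins and meets in $\LM(L)$ into unions and intersections.

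Meets are easy: $\varphi(\bigwedge t_i)=\bigcap\varphi(t_i)$ holds by definition.

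For joins, suppose $j\le\bigvee t_i$ with all $t_i$ left modular. Then $\bigvee t_i\not\le\kappa(j)$, so some $t_{i_0}\not\le\kappa(j)$. The criterion applied to $t_{i_0}$ then gives $j\le t_{i_0}$. This shows $\varphi(\bigvee t_i)=\bigcup\varphi(t_i)$.

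Thus $\varphi$ embeds $\LM(L)$ as a complete sublattice of the power set of $\JI^c(L)$, with arbitrary unions and intersections. A complete ring of sets is completely distributive, so $\LM(L)$ is completely distributive.
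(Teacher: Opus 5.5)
Your proposal is correct and follows essentially the paper's proof: you get closure under arbitrary meets and joins from the criterion of Proposition~\ref{Prop: Characterization of left modular}, then embed $\LM(L)$ via $t\mapsto\{j\in\JI^c(L)\mid j\le t\}$ as a family of subsets of $\mathcal{P}(\JI^c(L))$ closed under arbitrary unions and intersections, which gives complete distributivity. The only cosmetic difference is that you check $\varphi(\bigvee t_i)=\bigcup\varphi(t_i)$ directly, while the paper passes through the auxiliary map $g(x)=\{m\in\MI^c(L)\mid x\le m\}$ and the identity $f(x)=\kappa^{-1}(\MI^c(L)\setminus g(x))$; this is the same argument in different notation.
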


\begin{proof}
Let $S$ be a subset of $\LM(L)$, We only prove that $x:=\bigwedge S$ is left modular again.
Fix $j\in{\JI^c}(L)$ and $m:=\kappa(j)\in{\MI^c}(L)$, and assume $x\ {\not\leq}\ m$. Then any $s\in S$ satisfies $s\ {\not\leq}\ m$. By Proposition \ref{Prop: Characterization of left modular}, we have $j\leq s$ for each $s\in S$, and hence $j\leq x$. Again by Proposition \ref{Prop: Characterization of left modular}, $x$ is left modular.

It remains to prove that $\LM(L)$ is completely distributive.
Let $\mathcal{P}({\JI^c}(L))$ be the power set of ${\JI^c}(L)$, which forms a complete lattice. Define a map
\[f:\LM(L)\to \mathcal{P}({\JI\nolimits^c}(L)),\ x\mapsto f(x):=\{j\in{\JI\nolimits^c}(L)\mid j\le x\}.\]
This is an injective map since $x=\bigvee f(x)$ holds thanks to the weakly atomic property of $L$.

We prove that $\LM(L)$ is a complete sublattice of $\mathcal{P}({\JI^c}(L))$. Then the claim follows since $\mathcal{P}({\JI^c}(L))$ is completely distributive.
Let $S$ be a subset $S$ of ${\JI^c}(L)$. Since  $j\in{\JI^c}(L)$ satisfies $j\le\bigwedge S$ if and only if $j\le s$ for each $s\in S$.
we have
\[f(\bigwedge S)=\bigwedge_{s\in S} f(s).\]
On the other hand, by the same argument, the map
\[g:\LM(L)\to P({\MI\nolimits^c}(L)),\ x\mapsto g(x):=\{m\in{\MI\nolimits^c}(L)\mid x\le m\}\]
satisfies $g(\bigvee S)=\bigwedge_{s\in S} g(s)$.
By Proposition \ref{Prop: Characterization of left modular}, $f(x)=\kappa^{-1}({\MI^c}(L)\setminus g(x))$ holds for each $x\in\LM(L)$. Thus
\begin{align*}
f(\bigvee S)&=\kappa^{-1}({\MI\nolimits^c}(L)\setminus g(\bigvee S))=\kappa^{-1}({\MI\nolimits^c}(L)\setminus \bigwedge_{s\in S}g(s))\\
&=\bigvee_{s\in S}\kappa^{-1}({\MI\nolimits^c}(L)\setminus g(s))=\bigvee_{s\in S}f(s)
\end{align*}
holds, as desired.
\end{proof}

\medskip

\subsection{Extremality in weakly atomic completely semidistributive lattices}\label{Subsection: Extremality in weakly atomic completely semidistributive lattices}

Throughout, $L$ denotes a complete lattice which is weakly atomic and completely semidistributive. In particular, by Theorem \ref{Thm:RST implications}, $L$ is a bi-spatial $\kappa$-lattice.

\begin{definition} \label{Defn extremality and modularity}
For a maximal chain $\mathcal{X}$ in $L$, we define the following notions:
    \begin{enumerate}
        \item We say $\mathcal{X}$ is \emph{left modular} if every element in $\mathcal{X}$ is left modular.
        \item We say $\mathcal{X}$ is \emph{extremal} if for each $j \in \JI^c(L)$, there are $x < y$ in $\mathcal{X}$ with $x \vee j = y$ and $y \wedge \kappa(j) = x$.
    \end{enumerate}
\end{definition}

In the following remark, we compare the above notion of extremality for completely semidistributive lattices with our earlier definition of $\lambda$-extremality given in Section \ref{Section: Left modularity and extremality for well-separated kappa-lattices}, where the latter applies to a more general setting and therefore it was inevitably more technical.

\begin{remark} \label{Remark of defns}
With the same notation as above, using the bijection $\kappa = \lambda$, the definition of extremality in Definition \ref{Def: Extremal for arbitrary lattices} is as follows: \begin{enumerate}
        \item For every $x \in \mathcal{X}$, if $j \in\JI^c(L)$ with $j \not\leq x$, then $x \leq \kappa(j)$.
        \item For $x \lessdot y$ in $\mathcal{X}$, there is a unique $j \in \JI^c(L)$ with $j \not \leq x$ and $j \leq y$, a unique $m \in \MI^c(L)$ with $x \leq m$ and $y \not \leq m$, and $m = \kappa(j)$. 
    \end{enumerate}
\end{remark}

To highlight a technical point in the definition of extremality in our context and compare with the definition given in Section \ref{Section: Left modularity and extremality for well-separated kappa-lattices}, we refer to Remark \ref{Rem: Technicality in extremality}. This comparison is further discussed in the following.

\begin{proposition} \label{Prop: extremality equals left modularity}
    In our setting, the two definitions of extremality are equivalent, under $\lambda = \kappa$.
\end{proposition}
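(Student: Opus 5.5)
We must show that, for a maximal chain $\mathcal{X}$ in $L$, the condition of Definition \ref{Defn extremality and modularity}(2) (every $j\in\JI^c(L)$ labels some cover $x\lessdot y$ in $\mathcal{X}$, i.e.\ $x\vee j=y$ and $y\wedge\kappa(j)=x$) is equivalent to conditions (1) and (2) of Remark \ref{Remark of defns}. Throughout we use Lemma \ref{Lemma: cover relation and join-irred}, Lemma \ref{Lem: join-irr cover relation}, and the fact that $L$ is a bi-spatial $\kappa$-lattice, so $x=\bigvee\{j\in\JI^c(L)\mid j\le x\}$.

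\emph{Remark \ref{Remark of defns} $\Rightarrow$ Definition \ref{Defn extremality and modularity}(2).} Fix $j\in\JI^c(L)$. Set $x:=\bigvee\{u\in\mathcal{X}\mid j\not\le u\}$ and $y:=\bigwedge\{u\in\mathcal{X}\mid j\le u\}$. Both lie in $\mathcal{X}$, since a maximal chain in a complete lattice is closed under arbitrary joins and meets. Every $u$ with $j\not\le u$ satisfies $u\le\kappa(j)$ by (1), so $x\le\kappa(j)$ and therefore $j\not\le x$. Also $j\le y$, so $x<y$ and $\mathcal{X}$ has no element strictly between them; hence $x\lessdot y$ in $\mathcal{X}$ and in $L$. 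By Lemma \ref{Lemma: cover relation and join-irred} this cover has a label $j'$ with $x\vee j'=y$ and $y\wedge\kappa(j')=x$. Thus $j'\le y$ and $j'\not\le x$, so uniqueness in (2) forces $j=j'$, which is what we need.

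\emph{Definition \ref{Defn extremality and modularity}(2) $\Rightarrow$ Remark \ref{Remark of defns}.} By Lemma \ref{Lem: join-irr cover relation}, each $j$ labels a cover $x_j\lessdot y_j$ in $\mathcal{X}$. For (1), take $u\in\mathcal{X}$ with $j\not\le u$. Then $u\not\ge y_j$, so by comparability $u\le x_j\le\kappa(j)$.

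For (2), let $x\lessdot y$ in $\mathcal{X}$ with label $j_0$, and suppose $j\le y$, $j\not\le x$. Since $j\le y_j$, $j\not\le x_j$, and everything lies on one chain, the cover $x_j\lessdot y_j$ must coincide with $x\lessdot y$. Indeed, if $y\le x_j$ then $j\le x_j$, a contradiction; if $y_j\le x$ then $j\le x$, a contradiction. Hence the covers $x\lessdot y$ and $x_j\lessdot y_j$ overlap in the chain and are equal, which gives $x\vee j=y$ and $y\wedge\kappa(j)=x$. The uniqueness part of Lemma \ref{Lemma: cover relation and join-irred} then gives $j=j_0$. The dual argument shows $m=\kappa(j_0)$ is the unique completely meet-irreducible with $x\le m$ and $y\not\le m$.

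\emph{Main obstacle.} The delicate step is the first direction: showing $x\lessdot y$ genuinely holds in $L$ and not merely in $\mathcal{X}$. This is exactly where we use that a maximal chain is complete and admits no refinement in $L$.
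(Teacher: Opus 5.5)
Your proof is correct and follows essentially the same route as the paper. In one direction you use the same elements $\bigvee\{u\in\mathcal{X}\mid j\not\le u\}\lessdot\bigwedge\{u\in\mathcal{X}\mid j\le u\}$ as a cover labelled by $j$. In the other you use the same chain-comparability argument, which forces the cover labelled by $j$ to coincide with $x\lessdot y$, followed by the uniqueness in Lemma \ref{Lemma: cover relation and join-irred}; you are somewhat more explicit than the paper about why the label of that cover equals $j$ and why a maximal chain is closed under arbitrary joins and meets.
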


\begin{proof}
    Assume that $L$ is extremal using Definition \ref{Defn extremality and modularity} and let $\mathcal{X}$ be a maximal chain that satisfies that definition. We verify the two conditions in Remark \ref{Remark of defns}. 
    
    (1) Let $x \in \mathcal{X}$ and $j \in \JI^c(L)$ with $j \not\le x$. We must show $x \leq \kappa(j)$. 
    Note that, by Definition \ref{Defn extremality and modularity}, there exists $u \lessdot v$ in $\mathcal{X}$ such that $u \vee j = v$ and $v \wedge \kappa(j)=u$. Consequently, $x \le u \le \kappa(j)$, as desired.
    
    (2) Let $x \lessdot y$ in $\mathcal{X}$. Consider $j:=j(x \lessdot y)$, which satisfies $j \not\le x$ and $j \le y$. Let $j' \in \JI^c(L)$ also satisfy $j' \not\le x$ and $j' \le y$. 
    Again, by Definition \ref{Defn extremality and modularity}, there is $u \lessdot v$ in $\mathcal{X}$ with $u \vee j' = v$ and $v \wedge \kappa(j')=u$. Either $x \le u$ or $u < x$. In the later case, we get $v \le x$. Since  $j' \not \le u$ and $j' \le v$, we get the contradiction $j' \le x$. Hence, we have $x \le u$. Similarly, $v \le y$. The inequalities  $x \le u \lessdot v \le y$ yield $u=x, v=y$ and $j' = j$. 
    The condition for $\MI^c(L)$ can be verified dually. 
    As shown above, the unique $j\in \JI^c(L)$ with $j \not\le x$ and $j \le y$ is the $j$ with $x \vee j = y$ and $y \wedge \kappa(j) = x$. Similarly, the unique $m \in\MI^c(L)$ with $x \le m$ and $y \not\le m$ is the $m$ with $x \vee \kappa^{-1}(m) = y$ and $y \wedge m = x$. So $\kappa(j)=m$.

    Now, assume that $\mathcal{X}$ is a maximal chain in $L$ satisfying the two conditions from Remark \ref{Remark of defns}. Let $j \in \JI^c(L)$.
    We consider $a:= \bigvee_{j \not\le u, u\in \mathcal{X}}u$, which is in $\mathcal{X}$. Similarly, consider $b:= \bigwedge_{j \le v, v \in \mathcal{X}}v$ which is in $\mathcal{X}$. For $u \in \mathcal{X}$ with $j \not\le u$, we have $u \le \kappa(j)$. If $v \in \mathcal{X}$ with $j \le v$, then $u \le v$ (as otherwise, we get $j \le v < u \le \kappa(j)$, a contradiction). This yields that $a \le b$ and $a \le \kappa(j)$.  If $a = b$, then $j \le b=a \le \kappa(j)$, a contradiction. Hence, $a < b$. Any $c \in \mathcal{X}$ with $a < c \le b$ has to be such that $j \le c$ as otherwise, it would violate the definition of $a$. But then we get $c \le b \le c$ by using the definition of $b$. This shows that $a \lessdot b$. Hence, there is a unique $j' \in \JI^c(L)$ with $j' \not\le a, j' \le b$. Clearly, we have $j' = j$ and hence Definition \ref{Defn extremality and modularity} is satisfied.
\end{proof}

Now, we are ready to prove part (2) of Theorem \ref{Thm: Equivalences for infinite lattices}, concerning the equivalence of left modularity and extremality in the setting of completely semidistributive and weakly atomic lattices.

\begin{theorem}\label{Thm: Equiv left modular & Extremal for weakly atomic completely semidistributive lattices}
    Let $L$ be a completely semidistributive and weakly atomic lattice. Then $L$ is left modular if and only if it is extremal.
\end{theorem}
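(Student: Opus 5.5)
We work with the simpler notion of extremality from Definition \ref{Defn extremality and modularity}. By Proposition \ref{Prop: extremality equals left modularity} it is equivalent to the general Definition \ref{Def: Extremal for arbitrary lattices} with $\lambda=\kappa$. Throughout, $L$ is a bi-spatial $\kappa$-lattice by Theorem \ref{Thm:RST implications}, and we rely mainly on Proposition \ref{Prop: Characterization of left modular}: $t$ is left modular if and only if for every $j\in\JI^c(L)$ exactly one of $j\le t$ and $t\le\kappa(j)$ holds. The strategy is to show that a maximal chain is left modular if and only if it is extremal.

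\emph{Left modular $\Rightarrow$ extremal.} Let $\mathcal{X}$ be a maximal chain of left modular elements and fix $j\in\JI^c(L)$. As in the second half of the proof of Proposition \ref{Prop: extremality equals left modularity}, set
\[a:=\bigvee\{u\in\mathcal{X}\mid j\not\le u\},\qquad b:=\bigwedge\{v\in\mathcal{X}\mid j\le v\}.\]
Both lie in $\mathcal{X}$, since a maximal chain in a complete lattice is closed under arbitrary joins and meets. (The first set contains $\hat0$ and the second contains $\hat1$, so neither is empty.)

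Each $u$ with $j\not\le u$ satisfies $u\le\kappa(j)$ by Proposition \ref{Prop: Characterization of left modular}, so $a\le\kappa(j)$ and hence $j\not\le a$. In particular $a$ itself belongs to the first set. Each $v$ with $j\le v$ satisfies $v\not\le\kappa(j)$ (the ``not both'' clause), so every element of the first set lies below every element of the second. Thus $a<b$ with nothing of $\mathcal{X}$ strictly between them, and maximality of $\mathcal{X}$ gives $a\lessdot b$.

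It remains to show that this cover is labelled by $j$. Since $a$ is left modular and $j\le b$, applying left modularity of $a$ to $j_*\le j$ gives $(j_*\vee a)\wedge j=j_*$. This shows $a\vee j>a$. Also $a\vee j\le b$, so $a\vee j=b$ because $a\lessdot b$. Finally $a\le b\wedge\kappa(j)<b$, where the strict inequality holds because $j\le b$ but $j\not\le\kappa(j)$. Hence $b\wedge\kappa(j)=a$, again using $a\lessdot b$. So $x=a$ and $y=b$ witness Definition \ref{Defn extremality and modularity} for $j$.

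\emph{Extremal $\Rightarrow$ left modular.} Let $\mathcal{X}$ be an extremal maximal chain and $t\in\mathcal{X}$. We verify condition (2) of Proposition \ref{Prop: Characterization of left modular}. Fix $j$ and choose $x\lessdot y$ in $\mathcal{X}$ with $x\vee j=y$ and $y\wedge\kappa(j)=x$. These form a cover by Lemma \ref{Lem: join-irr cover relation}.

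Since $\mathcal{X}$ is a chain, either $t\ge y$ or $t\le x$. In the first case $j\le y\le t$. In the second case $t\le x\le\kappa(j)$. The two cannot hold simultaneously, since $j\le t\le\kappa(j)$ would contradict $j\wedge\kappa(j)=j_*<j$. Hence $t$ is left modular.

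The main obstacle is the step in the first direction where $a\lessdot b$ must be shown to be labelled exactly by $j$, in particular that $j\not\le a$ is attained at the supremum. This uses the left modularity of $a$ through the dichotomy of Proposition \ref{Prop: Characterization of left modular}, together with care about infinite joins along the chain.
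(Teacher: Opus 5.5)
Your proposal is correct and follows the paper's proof essentially step for step: the same elements $\bigvee\{u\in\mathcal{X}\mid j\not\le u\}$ and $\bigwedge\{v\in\mathcal{X}\mid j\le v\}$ give the cover labelled by $j$, and the converse uses the same case split $t\le x$ or $t\ge y$ together with Proposition~\ref{Prop: Characterization of left modular}. The only difference is cosmetic. Your appeal to left modularity of $a$ to obtain $a\vee j>a$ is unnecessary, since this follows at once from $j\not\le a$, which you had already shown (and which that appeal itself needs in order to get $a\wedge j\le j_*$).
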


\begin{proof}
    Let $\mathcal{X}$ be a maximal chain that is extremal. First, we prove that every $t \in \mathcal{\mathcal{X}}$ is left modular. Let $j \in \JI^c(L)$. By extremality, there exist $t' < t''$ in $\mathcal{X}$ with $t' \vee j = t''$ and $t'' \wedge \kappa(j) = t'$. In particular, we get $j \le t''$ and $ t' \le \kappa(j)$. If $t \le t'$, then $t \le \kappa(j)$. Otherwise, if $t \not\le t'$, because $t' < t''$ is a cover relation by Lemma \ref{Lem: join-irr cover relation}, then $j \leq t'' \leq t$. Thus, $t$ is left modular by the characterization of Proposition \ref{Prop: Characterization of left modular}.

Now, to show the other implication, let $\mathcal{X}$ be a maximal chain in $L$ with all elements in $\mathcal{X}$ being left modular. Let $j \in \JI^c(L)$. Consider the following elements in $\mathcal{X}$
$$t:= \bigvee_{j \not \le x, x \in \mathcal{X}} x \qquad \text{ and } \qquad t' = \bigwedge_{j \le y, y \in \mathcal{X}} y.$$
Let $x$ be a term of $t$ and $y$ a term of $t'$. Since $j \not \le x$ and $x$ is left modular, we get $x \le \kappa(j)$. If $y \le x$, then $y \le \kappa(j)$ so that $j \le \kappa(j)$, a contradiction. Hence, $x \le y$, which implies $t \le t'$. 
Note that $j \le t'$ and $t \le \kappa(j)$ so that $j \not \le t$. Thus, $t < t'$. If $t \le s < t'$, then $j \not \le s$. But then, $s \le t \le s$, therefore $t = s$. From this, we conclude that $t < t'$ is a cover relation. Observe that $t < t \vee j \le t'$, implying that $t \vee j = t'$. Similarly, $t \le t' \wedge \kappa(j) < t'$ so that $t' \wedge \kappa(j)=t$. This shows that $\mathcal{X}$ is extremal and this completes the proof.
\end{proof}

\begin{remark}\label{Rem: comparison between our two implications}
   Note that if $L$ is a complete semidistributive and weakly atomic lattice, then $L$ is bi-spatial and $\kappa$-lattice; see \cite[Theorem 3.1]{RST}. However, it does not necessarily imply that $L$ is  well-separated. That is, Proposition \ref{Thm: Equiv left modular & Extremal for weakly atomic completely semidistributive lattices} does not follow from our earlier result in Section \ref{Section: Left modularity and extremality for well-separated kappa-lattices}; particularly from Theorem \ref{Thm: for well-sep kappa lattice, left modular equi to extremal}. 
   
   In fact, for a complete lattice $L$, being well-separated $\kappa$-lattice neither implies, nor is it implied, being weakly atomic completely semidistributive (\cite[Examples 3.22 \& 3.25]{RST}). More specifically, neither of the assertions of Theorem \ref{Thm: Equivalences for infinite lattices} implies the other one, and they treat two distinct families of complete lattices.
\end{remark}

\medskip

\subsection{The labelling quiver and successor-closed sets}\label{Subsection: The labelling quiver}
Throughout, $L$ denotes a weakly atomic and completely semidistributive.

\medskip

The \emph{labelling quiver} of $L$, denoted by $Q_{\JI^c(L)}$ or simply $Q_L$, is the quiver whose vertex set is $\JI^c(L)$ and for $i, j \in L$, we draw an arrow $i \to j$ precisely when $i \ne j$ and $i \not \le \kappa(j)$. 
We note that our description of arrows in $Q_L$ is similar to that considered for any $\kappa$-lattice $L$ in \cite{RST}, where the author denoted that by $\rightarrow_L$; see the paragraph before \cite[Theorem 1.4]{RST}. 
As in Subsection \ref{Subsection: Main Results}, a set $S$ of vertices of $Q_L$ is called \emph{successor-closed} if for any arrow $i\to j$ in $Q_L$, if $i \in S$ then $j \in S$. 
In the following, ${\rm succ}(Q_L)$ denotes the set consisting of all successor-closed sets of vertices of $Q_L$. Evidently, ${\rm succ}(Q_L) \subseteq \mathcal{P}(\JI^c(L))$, where $\mathcal{P}(\JI^c(L))$ denotes the power set of $\JI^c(L)$.

\medskip
\begin{remark}\label{Rem: downset vs. succ-closed}
Observe that what we have called successor-closed sets is sometimes known as downsets (see, for instance, \cite[Section 2.1]{RST}). However, we emphasize that our labelling quivers may have cycles; therefore, they may not be necessarily posets. Thus, to avoid any confusion by the terminology, we use the term successor-closed, instead of downset, since the latter is more common in the setting of posets.
\end{remark}

We now prove the following characterization of left modular elements in terms of the successor-closed sets. In particular, we show Theorem \ref{Thm: LM(L) and successor-closed sets in Introduction}.

\begin{theorem}\label{Thm: bijection between LM(L) and successor-closed sets}
    The map
    $\varphi: \LM(L) \to {\rm succ}(Q_L)$ with $\varphi(t) = \{j \in \JI^c(L) \mid j \le t\}$ is a bijection, whose inverse $\psi: {\rm succ}(Q_L) \to \LM(L)$ is given by $\psi(S) = \bigvee S$.
\end{theorem}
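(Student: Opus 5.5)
We will rely on the characterization of left modular elements in Proposition \ref{Prop: Characterization of left modular}: $t\in\LM(L)$ if and only if for every $j\in\JI^c(L)$ exactly one of $j\le t$ and $t\le\kappa(j)$ holds. We also use that $L$ is spatial (Theorem \ref{Thm:RST implications}), so $t=\bigvee\varphi(t)$ for every $t$. The plan has three steps: show $\varphi$ lands in ${\rm succ}(Q_L)$, show $\psi$ lands in $\LM(L)$, and check that the two maps are mutually inverse.

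\emph{Step 1: $\varphi(t)$ is successor-closed.} Take $t\in\LM(L)$ and an arrow $i\to j$ with $i\in\varphi(t)$, that is, $i\le t$ and $i\not\le\kappa(j)$. If $j\not\le t$, then the characterization gives $t\le\kappa(j)$. This yields $i\le t\le\kappa(j)$, a contradiction. Hence $j\le t$, so $j\in\varphi(t)$.

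\emph{Step 2: $\psi(S)=\bigvee S$ is left modular for $S$ successor-closed.} Put $t=\bigvee S$ and fix $j\in\JI^c(L)$. We split into two cases.
\begin{enumerate}
\item If $j\in S$, then $j\le t$.
\item If $j\notin S$, then successor-closedness forbids any arrow $i\to j$ with $i\in S$. So every $i\in S$ satisfies $i\le\kappa(j)$ (note $i\neq j$ because $j\notin S$). Hence $t\le\kappa(j)$.
\end{enumerate}
In either case one of the two alternatives holds. They cannot both hold, since otherwise $j\le\kappa(j)$, which is impossible. By Proposition \ref{Prop: Characterization of left modular}, $t\in\LM(L)$.

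\emph{Step 3: the maps are mutually inverse.} The identity $\psi\varphi(t)=t$ is exactly spatiality. For $\varphi\psi(S)=S$, the inclusion $S\subseteq\varphi(\bigvee S)$ is clear. Conversely, suppose $j\le\bigvee S$ with $j\notin S$. The argument of Step 2 gives $\bigvee S\le\kappa(j)$, so $j\le\kappa(j)$, a contradiction. Thus $\varphi(\bigvee S)=S$.

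This last point is the only place where care is needed: one might worry that some completely join-irreducible $j$ lies below $\bigvee S$ without belonging to $S$. Step 2 already rules this out, because it shows $\bigvee S\le\kappa(j)$ whenever $j\notin S$. So no real obstacle is expected, and everything reduces to Proposition \ref{Prop: Characterization of left modular}.
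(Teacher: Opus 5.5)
Your proof is correct and follows the paper's argument step for step. You show that $\varphi$ and $\psi$ are well defined via Proposition \ref{Prop: Characterization of left modular}, get $\varphi\psi(S)=S$ from the same $j\le\kappa(j)$ contradiction, and your explicit appeal to spatiality (Theorem \ref{Thm:RST implications}) for $\psi\varphi(t)=t$ is slightly more careful than the paper, which simply calls this identity obvious from the definitions.
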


\begin{proof}
    Let $t$ be left modular in $L$. First we verify the map is well-defined, by showing that $\varphi(t)$ is successor-closed. Let $i \to j$ be an arrow in $Q_L$, and assume that $i \in \varphi(t)$. Evidently $i \le t$, and by the definition of the labelling quiver, we have $i \not \le \kappa(j)$. By Prop. \ref{Prop: Characterization of left modular}, we have $j \le t$ or $t \le \kappa(j)$. In the latter case, we get $i \le \kappa(j)$, which is absurd. Hence, $j \le t$ and consequently $j \in \varphi(t)$. Thus, $\varphi$ is well-defined.

    Now, let $S \in {\rm succ}(Q_L)$ and put $t := \bigvee S$. We again use Prop. \ref{Prop: Characterization of left modular} to verify that $t$ is left modular. Let $j \in \JI^c(L)$. If $j \in S$, then $j \le t$ and we are done. Hence, we assume $j \not \in S$ and show that $t \leq \kappa(j)$. Since $j \not \in S$, by definition, for any $i \in S$, we have $i \le \kappa(j)$. This yields $t = \bigvee S \le \kappa(j)$, which proves the desired result. 

\medskip
Now we prove that $\varphi$ is a bijection. 
From the definitions, it is obvious that $\psi \circ \varphi$ is the identity. 
Now, observe that for $S \in {\rm succ}(Q_L)$, we have
    $$\varphi (\psi(S)) = \{j \in {\JI}^c(L) \mid j \le \bigvee S\}.$$
    It is clear that $S \subseteq \varphi (\psi(S))$. If $j \in \varphi (\psi(S))$, then $j \le \bigvee S$. We claim $j\in S$. If we assume otherwise, for each $i \in S$, we have $i \le \kappa(j)$; hence, $\bigvee S \le \kappa(j)$. This yields $j \le \kappa(j)$; a contradiction. Hence, we have the reverse inclusion $\varphi (\psi(S)) \subseteq S$.
\end{proof}

\begin{remark}
    The map $\varphi$ can be seen as the co-restriction of the map $f$ from the proof of Proposition \ref{Prop: LM(L) is distributive sublattice}, where we co-restrict $\mathcal{P}(\JI^c(L))$ to ${\rm succ}(Q_L)$.
\end{remark}

The above theorem has the following useful consequence. 

\begin{corollary}\label{Cor: Q_L being acyclic}
Let $(L,\leq)$ be a weakly atomic completely semidistributive lattice. 
If $L$ is extremal, then the labelling quiver $Q_L$ has no oriented cycles.
\end{corollary}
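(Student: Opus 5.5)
By Theorem \ref{Thm: Equiv left modular & Extremal for weakly atomic completely semidistributive lattices}, an extremal $L$ has a maximal chain $\mathcal{X}$ consisting of left modular elements. The plan is to use $\mathcal{X}$ to place the vertices of $Q_L$ in a linear order, and then show that every arrow strictly decreases this order. An oriented cycle would then be impossible.

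First, attach to each $j\in\JI^c(L)$ a cover relation of $\mathcal{X}$. Following the proof of Theorem \ref{Thm: Equiv left modular & Extremal for weakly atomic completely semidistributive lattices}, set
$$t_j:=\bigvee_{x\in\mathcal{X},\ j\not\le x}x.$$
As shown there, for every $x\in\mathcal{X}$ we have $j\le x$ if and only if $x>t_j$. In words, $t_j$ is the largest element of $\mathcal{X}$ that does not lie above $j$.

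Next, consider an arrow $i\to j$ in $Q_L$, so $i\neq j$ and $i\not\le\kappa(j)$. Each $x\in\mathcal{X}$ is left modular, so its image $\varphi(x)$ under the map of Theorem \ref{Thm: bijection between LM(L) and successor-closed sets} is successor-closed. Hence, for every $x\in\mathcal{X}$, $i\le x$ implies $j\le x$. Equivalently, whenever $x>t_i$ we also have $x>t_j$. Taking $x$ to be the upper cover of $t_i$ in $\mathcal{X}$, which exists because $t_i\lessdot t_i\vee i$ there, we obtain $t_j<t_i\vee i$, and hence $t_j\le t_i$.

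The labelling step is the one that needs care. We need the map $j\mapsto t_j$ to be injective. The element $t_j\vee j$ is the upper cover of $t_j$ in $\mathcal{X}$. By Proposition \ref{Prop: extremality equals left modularity}, there is a unique completely join-irreducible element lying below $t_j\vee j$ but not below $t_j$, and that element is $j$. So $t_j$ determines $j$. Consequently, for an arrow $i\to j$ with $i\ne j$ we get the strict inequality $t_j<t_i$.

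Finally, suppose there were an oriented cycle $j_0\to j_1\to\cdots\to j_n=j_0$. Applying the previous step along each arrow gives
$$t_{j_0}>t_{j_1}>\cdots>t_{j_n}=t_{j_0},$$
which is a contradiction. Therefore $Q_L$ has no oriented cycles.
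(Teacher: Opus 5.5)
Your proof is correct and follows essentially the same route as the paper: both use that every $j\in\JI^c(L)$ labels a cover of the extremal chain, and that each chain element $x$ is left modular, so $\varphi(x)$ is successor-closed. The paper applies this once, along a path between two vertices of the cycle after ordering their covers in $\mathcal{X}$, whereas you apply it arrow by arrow to obtain the strictly decreasing map $j\mapsto t_j$; your injectivity step also makes explicit a point the paper leaves implicit in its ``with no loss of generality $u<v\le x<y$'', namely that distinct labels occupy distinct covers of the chain.
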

\begin{proof}
For the sake of contradiction, assume $Q_L$ contains an oriented cycle. 
Let $i,j \in \JI^c(L)$ lie on an oriented cycle. Consider $\mathcal{X}$ an extremal chain in $L$. There exist $u,v \in \mathcal{X}$ such that $u \vee i = v$ and $v \wedge i = u$. Similarly, there exist $x,y \in \mathcal{X}$ such that $x \vee j = y$ and $y \wedge j = x$. With no loss of generality, assume that $u < v \le x < y$. Since $v$ is on $\mathcal{X}$, it is left modular. 
Because $i \le v$, and since there is a path in $Q_L$ from $i$ to $j$, we conclude that $j \le v$. However, this leads to $j \le x$, which is the desired contradiction that.
\end{proof} 
 
By the above corollary, if $L$ is extremal, the labelling quiver $Q_L$  is acyclic, and therefore it possesses the structure of a poset. More explicitly, for $x, y \in \JI^c(L)$, we write $x \preceq y$ if there is an oriented path from $y$ to $x$. The following result on the poset $(Q_L, \preceq)$ gives an explicit construction of all extremal chains in $L$. 
In particular, we prove Theorem \ref{Thm: labelling quiver, linear extensions and extremal chains in Introduction}. 
Before showing this result, let us recall that, for a fixed set $S$, and two orders $\le_1$ and $\le_2$ on $S$, we say $\le_2$ \emph{extends} $\le_1$ provided that for $x, y \in S$, we have $x \le_1 y$ implies $x \le_2 y$. Moreover, $\le_2$ is called a \emph{linear extension} of $\le_1$, if $\le_2$ is a total order that extends $\le_1$. We also note that, as mentioned in Remark \ref{Rem: downset vs. succ-closed}, for a poset $(P,\preceq)$, the downsets coincide with the successor-closed sets in $(P,\preceq)$, defined earlier in this subsection.

\begin{theorem}\label{Thm: labelling quiver, linear extensions and extremal chains}
Let $(L,\leq)$ be a weakly atomic completely semidistributive lattice. 
If $L$ is extremal, there is a bijective correspondence between linear extensions of $\preceq$ in the poset $(Q_L, \preceq)$ and the extremal chains in $L$. 
Given a linear extension $\mathcal{L}$ of $(Q_L, \preceq)$, the set $D(\mathcal{L})$ of downsets of $\mathcal{L}$ yields the extremal chain $\{\bigvee D\mid D \in D(\mathcal{L})\}$.
\end{theorem}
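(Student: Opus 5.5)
The plan is to build maps in both directions and check that they are mutually inverse, using Theorem \ref{Thm: bijection between LM(L) and successor-closed sets} (left modular elements correspond to successor-closed sets) and Theorem \ref{Thm: Equiv left modular & Extremal for weakly atomic completely semidistributive lattices} (an extremal chain consists of left modular elements, and conversely a maximal chain of left modular elements is extremal). Here a downset of a linear extension $\mathcal{L}$ means a subset $D\subseteq \JI^c(L)$ closed downward in the total order $\mathcal{L}$. Since $\mathcal{L}$ extends $\preceq$, such a $D$ is a downset of $(Q_L,\preceq)$. Downsets for $\preceq$ are exactly the successor-closed sets, because $x\preceq y$ means there is a path from $y$ to $x$. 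Hence $D\in{\rm succ}(Q_L)$, and $\bigvee D$ is left modular by Theorem \ref{Thm: bijection between LM(L) and successor-closed sets}.

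\textbf{Step 1 (from linear extensions to chains).} The set $D(\mathcal{L})$ of downsets of a total order is totally ordered by inclusion. The map $\psi$ is an order isomorphism onto its image: it is injective with inverse $\varphi$, and both maps are monotone. So $\mathcal{X}_\mathcal{L}:=\{\bigvee D\mid D\in D(\mathcal{L})\}$ is a chain of left modular elements.

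To show this chain is maximal, suppose $z\in L$ is comparable with every element of $\mathcal{X}_\mathcal{L}$. Put $D:=\bigcup\{D'\in D(\mathcal{L})\mid \bigvee D'\le z\}$ and $E:=\bigcap\{D'\in D(\mathcal{L})\mid z\le \bigvee D'\}$; both are again downsets of $\mathcal{L}$. If $z$ were not in the chain, then $\bigvee D< z<\bigvee E$, so $D\subsetneq E$. Any $j\in E\setminus D$ determines two downsets of $\mathcal{L}$: the set $\{i\mid i<_\mathcal{L} j\}$ and the set $\{i\mid i\le_\mathcal{L} j\}$. Together with $D\subseteq\{i<_\mathcal{L}j\}$ and $\{i\le_\mathcal{L}j\}\subseteq E$, these force $E=D\cup\{j\}$ with $D=\{i<_\mathcal{L}j\}$. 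By Lemma \ref{Lem: join-irr cover relation}, $\bigvee D\lessdot \bigvee E$: we have $\bigvee D\vee j=\bigvee E$, and $\bigvee E\wedge\kappa(j)=\bigvee D$, the latter because every $i\in E$ other than $j$ lies in $D$ and $\bigvee D\le\kappa(j)$ by the argument in Theorem \ref{Thm: bijection between LM(L) and successor-closed sets}. This cover relation contradicts $\bigvee D<z<\bigvee E$. So $\mathcal{X}_\mathcal{L}$ is a maximal chain of left modular elements, and it is extremal by Theorem \ref{Thm: Equiv left modular & Extremal for weakly atomic completely semidistributive lattices}.

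\textbf{Step 2 (from chains to linear extensions).} Let $\mathcal{X}$ be an extremal chain. Each $j$ labels a unique cover $u_j\lessdot v_j$ in $\mathcal{X}$, by Definition \ref{Defn extremality and modularity} and the uniqueness argument in Proposition \ref{Prop: extremality equals left modularity}. Define $i<_\mathcal{X} j$ if $v_i\le u_j$. Distinct $j$ label distinct covers, again by uniqueness, so this is a total order. It extends $\preceq$: if $i\to j$ is an arrow and $v_j\le u_i$, then by left modularity of $v_j$ (as in the proof of Corollary \ref{Cor: Q_L being acyclic}) $i\le v_j$ forces $j$ below an element where it is not. Every element of $\mathcal{X}$ is left modular, so $\varphi$ sends $\mathcal{X}$ to downsets of $<_\mathcal{X}$, namely $\varphi(x)=\{j\mid v_j\le x\}$.

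\textbf{Step 3 (the maps are inverse).} Applying Step 1 to $<_\mathcal{X}$ gives a chain containing $\psi\varphi(\mathcal{X})=\mathcal{X}$. Both chains are maximal, so they are equal. In the other direction, the cover of $\mathcal{X}_\mathcal{L}$ labelled by $j$ is exactly $\bigvee\{i<_\mathcal{L}j\}\lessdot\bigvee\{i\le_\mathcal{L}j\}$, so the order recovered from $\mathcal{X}_\mathcal{L}$ is $\mathcal{L}$.

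I expect the main obstacle to be the maximality argument in Step 1, specifically justifying that $D$ and $E$ are downsets that differ by a single element. It may instead be easier to argue directly that every element of $\JI^c(L)$ labels a cover of $\mathcal{X}_\mathcal{L}$, and to deduce maximality from that.
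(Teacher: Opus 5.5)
Your proposal is correct and follows the paper's route. Downsets of $\mathcal{L}$ are successor-closed, so $\psi$ gives a chain of left modular elements in which $j$ labels $\bigvee\{i<_{\mathcal{L}}j\}\lessdot\bigvee\{i\le_{\mathcal{L}}j\}$. Conversely, the labels along an extremal chain give a linear extension, by the argument of Corollary \ref{Cor: Q_L being acyclic}.

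Your $D$/$E$ argument for the maximality of $\mathcal{X}_{\mathcal{L}}$ supplies a step the paper leaves implicit. Note that $z\le\bigvee E$ there needs $\bigvee E=\bigwedge\{\bigvee D'\mid z\le\bigvee D'\}$, which holds by spatiality and $\varphi\psi=\mathrm{id}$.

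There is one slip in Step 2. For an arrow $i\to j$, the case to rule out is $v_i\le u_j$, not $v_j\le u_i$ as you wrote. In that case $i\le v_i$ and left modularity of $v_i$ force $j\le v_i\le u_j$, which is impossible since $u_j\le\kappa(j)$.
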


\begin{proof}
Since $L$ is extremal, by corollary \ref{Cor: Q_L being acyclic}, $Q_L$ contains no oriented cycle and the poset $(Q_L, \preceq)$ is well-defined.

To establish the bijection, consider a linear extension $\mathcal{L}$ of the order $\preceq$ induced from the acyclic quiver $Q_L$. The collection $D(\mathcal{L})$ of all downsets of $\mathcal{L}$ forms a totally ordered set by inclusion. It follows from Theorem \ref{Thm: bijection between LM(L) and successor-closed sets} that for each $D \in D(\mathcal{L})$, the element $\bigvee D$ is left modular. Given $j \in \JI^c(L)$, we consider the downset $D(j)$ of $\mathcal{L}$ generated by $j$ and the one, denoted $D(j^-)$, consisting of all elements from $\JI^c(L)$ which are strictly less than $j$ in $\mathcal{L}$. We have $\bigvee D(j^-) 
    \le \bigvee D(j)$ and it follows again from Theorem \ref{Thm: bijection between LM(L) and successor-closed sets} that this is a proper inequality. There is a unique element from $\JI^c(\mathcal{L})$, namely $j$, which belongs to $\bigvee D(j)$ but not to $\bigvee D(j^-)$. Hence, we see that $\bigvee D(j^-) \lessdot \bigvee D(j)$ with $j$ as a label. Since every element of $\JI^c(\mathcal{L})$ appears as a label in the chain  $\{\bigvee D \mid D \in D(\mathcal{L})\}$, the latter is extremal.

    To prove the converse, let $\mathcal{X}$ be an extremal chain in $L$. Hence, every $j \in \JI^c(L)$ appears as a label of a cover relation in $\mathcal{X}$. This induces a total order $\ll$ on $\JI^c(L)$. If $i \ll j$ for this order, then there are $u \lessdot v \le x \lessdot y$ in $\mathcal{X}$, where $i$ labels the cover relation $u \lessdot v$ and $j$ labels the cover relation $x \lessdot y$. Since $x$ is left modular and $j \not\le x$, by Prop. \ref{Prop: Characterization of left modular} we have $x \le \kappa(j)$. Hence, $i \le v \le x \le \kappa(j)$. Thus, there is no arrow from $i$ to $j$ in the labelling quiver. Consequently, the only possible arrows in $Q_L$ are arrows $i' \to j'$, where $i' \gg j'$. This proves $\ll$ is a linear extension of $\prec$.

    From the constructions, it is clear that the correspondences given above are inverses of each other. 
\end{proof}

We finish this section with the following remark and question.

\begin{remark}\label{Remark-Question}
A finite lattice is known to be semidistributive if and only if it is a $\kappa$-lattice, in which case it is automatically well-separated. 
In contrast, if $L$ is an infinite complete lattice, complete semidistributivity of $L$ does not necessarily imply that $L$ is a $\kappa$-lattice, and being a well-separated $\kappa$-lattice does not necessarily imply complete semidistributivity of $L$ (see \cite[Section 3.3]{RST}). These observations lead to the following question, which could be of interest for further investigations.

\textbf{Question:} Find a minimal set of conditions which, together with complete semidistributivity, guarantee that a complete lattice is a well-separated $\kappa$-lattice. 

Such conditions would presumably need to include bi-spatiality, because that is not guaranteed only by complete semidistributivity (see \cite[Example 3.24]{RST}). 
\end{remark}

\bigskip

\section{Brick-directed algebras, left modularity, and extremality}\label{Sec: Brick-directed algebras, Left modularity, and Extremality}
In this section, we return to the initial motivation for our generalization of the left modularity and extremality to (some) infinite lattices, and consider the fruitful setting of lattice of torsion classes. In this contexts, we revisit the extremality and left modularity, which results in alternative realizations of both notions in a purely representation-theoretical language (as in Theorem \ref{thm: characterization by brick-splitting} and Corollary \ref{Cor: left modularity and brick-directed algs}), as well as some nontrivial consequences (including Corollaries \ref{Cor: On tors kQ and Weyl group in Introduction} and \ref{Cor: left modular and extremal lattices of big cardinality in Introduction}). 

\medskip

As noted before, for any finite dimensional $k$-algebra $A$, the set of all torsion classes in $\modu A$, denoted by $(\tors A,\subseteq)$, forms a completely semidistributie weakly atomic well-separated $\kappa$-lattice under the inclusion-order. Moreover, recall that $\brick A$ denotes the set of all (isomorphism  classes) of bricks in $\modu A$ (see Section \ref{Subsec: Torsion Classes}). 
Without loss of generality, we assume $A$ is a connected $k$-algebra, and $\tors A$ denotes the aforementioned lattice of torsion classes. 

\medskip

Recall that $\mathcal{T} \in \tors A$ is said to be \emph{brick-splitting} torsion class if the pair $(\mathcal{T}, \mathcal{T}^{\perp})$ in $\modu A$ splits $\brick A$ into two disjoint subsets; that is, if every $B \in\brick A$ belongs either to $\mathcal{T}$ or $\mathcal{T}^{\perp}$ (and, obviously, not both). We begin by recalling the following useful characterization of the brick-splitting torsion classes which is used below.

\begin{theorem}[{\cite[Theorem 1.1]{AI+}}]\label{Thm: brick-splitting torsion class characterization}
For an algebra $A$, and any torsion class $\mathcal{T}$ in $\modu A$, the following are equivalent:
\begin{enumerate}
    \item $(\mathcal{T}, \mathcal{T}^{\perp})$ is a brick-splitting torsion pair.
    \item Every $B$ in $\brick A$ appears as an arrow in $\Hasse [0,\mathcal{T}]$ or $\Hasse[\mathcal{T}, \modu A]$.
    \item $\mathcal{T}$ is a left modular element of the lattice $\tors A$.
\end{enumerate}
\end{theorem}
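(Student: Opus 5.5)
This result is quoted from \cite[Theorem 1.1]{AI+}; a proof within the present framework would route everything through Proposition \ref{Prop: Characterization of left modular} applied to $L=\tors A$. By Theorem \ref{Thm: properties of tors(A)}, $L$ is weakly atomic and completely semidistributive. Its completely join-irreducibles are the torsion classes $\mathcal{T}(B)$ for $B\in\brick A$, with $\kappa(\mathcal{T}(B))={}^\perp B$, i.e.\ the torsion class whose torsion-free class is $\mathcal{F}(B)$. With this dictionary, condition (2) of Proposition \ref{Prop: Characterization of left modular} reads: for every brick $B$, either $\mathcal{T}(B)\subseteq\mathcal{T}$ or $\mathcal{T}\subseteq {}^\perp B$. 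These are equivalent to $B\in\mathcal{T}$ or $B\in\mathcal{T}^\perp$, respectively. This gives (1)$\Leftrightarrow$(3) directly, and the ``not both'' clause is automatic since $\mathcal{T}\cap\mathcal{T}^\perp=0$.

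For (2), I would use the brick labelling: an arrow of $\Hasse(\tors A)$ labelled by $B$ corresponds to a cover $\mathcal{U}\lessdot\mathcal{V}$ with $\mathcal{V}=\mathcal{U}\vee\mathcal{T}(B)$ and $\mathcal{U}=\mathcal{V}\wedge\kappa(\mathcal{T}(B))$ (Lemma \ref{Lemma: cover relation and join-irred}).

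To prove (3)$\Rightarrow$(2), take a left modular $\mathcal{T}$ and a brick $B$. If $B\in\mathcal{T}$, I would work inside the interval $[0,\mathcal{T}]$, which is again weakly atomic and completely semidistributive. I would set $\mathcal{U}:=\bigvee\{\mathcal{S}\le\mathcal{T}: B\notin\mathcal{S}\}$, imitating the proof of Theorem \ref{Thm: Equiv left modular & Extremal for weakly atomic completely semidistributive lattices}, except that $\mathcal{U}$ is taken over a maximal chain of $[0,\mathcal{T}]$ through $\mathcal{T}$ together with $\mathcal{T}\cap{}^\perp B$. Concretely, I would consider $\mathcal{U}=\mathcal{T}\cap{}^\perp B$ and $\mathcal{V}=\mathcal{U}\vee\mathcal{T}(B)\subseteq\mathcal{T}$, and show via Lemma \ref{Lem: join-irr cover relation} that $\mathcal{U}\lessdot\mathcal{V}$, which requires $\mathcal{V}\cap{}^\perp B=\mathcal{U}$. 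That equality holds because $\mathcal{U}\subseteq\mathcal{V}\cap{}^\perp B\subseteq\mathcal{T}\cap{}^\perp B=\mathcal{U}$. So $B$ labels an arrow of $\Hasse[0,\mathcal{T}]$. The case $B\in\mathcal{T}^\perp$ is dual, taking $\mathcal{V}=\mathcal{T}\vee\mathcal{T}(B)$ and $\mathcal{U}=\mathcal{V}\cap{}^\perp B\supseteq\mathcal{T}$. Notably, this direction does not even use left modularity beyond splitting.

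To prove (2)$\Rightarrow$(1), suppose $B$ labels $\mathcal{U}\lessdot\mathcal{V}$ with $\mathcal{V}\subseteq\mathcal{T}$. Then $B\in\mathcal{V}\subseteq\mathcal{T}$. If instead $\mathcal{T}\subseteq\mathcal{U}$, then $\mathcal{T}\subseteq\mathcal{U}\subseteq\kappa(\mathcal{T}(B))={}^\perp B$, so $B\in\mathcal{T}^\perp$.

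The main obstacle is the precise identification of $\kappa(\mathcal{T}(B))$ with ${}^\perp B$, and of labels with bricks in the sense of Theorem \ref{Thm: properties of tors(A)}(2),(4). I would cite \cite{DI+,BCZ} for this: the label of a cover $\mathcal{U}\lessdot\mathcal{V}$ is the unique brick in $\mathcal{V}\cap\mathcal{U}^\perp$.
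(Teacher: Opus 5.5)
Your argument is correct, but it follows a different route from the paper. The paper gives no proof here and simply imports \cite[Theorem 1.1]{AI+}. You instead derive the statement inside the present framework, running in reverse the inspiration the paper cites for Proposition \ref{Prop: Characterization of left modular} (namely \cite[Prop.~3.4]{AI+}).

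With the dictionary $\JI^c(\tors A)=\{\mathcal{T}(B)\mid B\in\brick A\}$ and $\kappa(\mathcal{T}(B))={}^\perp B$, (1)$\Leftrightarrow$(3) is literally condition (2) of that proposition. Then (1)$\Leftrightarrow$(2) follows from Lemmas \ref{Lemma: cover relation and join-irred} and \ref{Lem: join-irr cover relation} applied to the explicit covers $\mathcal{T}\cap{}^\perp B\lessdot(\mathcal{T}\cap{}^\perp B)\vee\mathcal{T}(B)$ and $(\mathcal{T}\vee\mathcal{T}(B))\cap{}^\perp B\lessdot\mathcal{T}\vee\mathcal{T}(B)$. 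In the latter, $\mathcal{U}\vee\mathcal{T}(B)=\mathcal{V}$ holds because $\mathcal{T}\subseteq\mathcal{U}$.

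Your route gives more than the statement. For \emph{every} torsion class $\mathcal{T}$, the bricks labelling $\Hasse[0,\mathcal{T}]$ are exactly those in $\mathcal{T}$, and those labelling $\Hasse[\mathcal{T},\modu A]$ are exactly those in $\mathcal{T}^\perp$. So (1)$\Leftrightarrow$(2) has nothing to do with left modularity, and your ``(3)$\Rightarrow$(2)'' is really (1)$\Rightarrow$(2).

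The price is the dictionary, which Theorem \ref{Thm: properties of tors(A)}(4) does not spell out. The key identity is short to check:
\begin{enumerate}
\item A nonzero submodule of $B$ lying in $\Filt(\Fac B)$ contains the image of a nonzero endomorphism of $B$, hence equals $B$. So every torsion class strictly inside $\mathcal{T}(B)$ lies in ${}^\perp B$, which gives $\mathcal{T}(B)_*=\mathcal{T}(B)\cap{}^\perp B$.
\item The inclusion ${}^\perp B\subseteq\kappa(\mathcal{T}(B))$ holds by maximality of $\kappa$.
\item For the reverse inclusion, suppose $\Hom_A(X,B)\neq 0$ for some $X\in\kappa(\mathcal{T}(B))$. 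Then the $\kappa(\mathcal{T}(B))$-torsion part $t(B)$ is nonzero and proper. Since $B$ is a brick, $B/t(B)\in\mathcal{T}(B)\cap{}^\perp B\subseteq\kappa(\mathcal{T}(B))$. Extension-closure then gives $B\in\kappa(\mathcal{T}(B))\cap\mathcal{T}(B)\subseteq{}^\perp B$, a contradiction.
\item The lattice label $\mathcal{T}(B')$ of a cover $\mathcal{U}\lessdot\mathcal{V}$ satisfies $B'\in\mathcal{V}\cap\mathcal{U}^\perp$. Hence it agrees with the brick label of \cite{DI+}, which settles the compatibility you flagged as the main obstacle.
\end{enumerate}

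Finally, delete the preliminary definition $\mathcal{U}:=\bigvee\{\mathcal{S}\le\mathcal{T}\mid B\notin\mathcal{S}\}$ and the maximal-chain remark. As written it fails: in Figure \ref{Fig. for A_2}, take $\mathcal{T}=\hat 1$ and $B=\begin{smallmatrix}2\\1\end{smallmatrix}$. The torsion classes avoiding $B$ are $\hat 0,x,y$, and $x\vee y=\hat 1$ contains $B$. Your concrete choice $\mathcal{U}=\mathcal{T}\cap{}^\perp B$ needs no chain at all.
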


The following result is a combination of \cite[Propositions 3.4, 3.5]{AI+}. Alternatively, it can be obtained using Theorems \ref{Thm: bijection between LM(L) and successor-closed sets} and \ref{Thm: brick-splitting torsion class characterization}.
For the definition and properties of the brick-quiver of an algebra $A$, denoted by $Q^b(A)$, see Section \ref{Subsection:Brick-directed Algebras}.

\begin{proposition}
\label{Prop: brick-splitting, left modular, successor-closed}
Let $A$ be a finite dimensional $k$-algebra. 
Then, for any torsion class $\mathcal{T} \in \tors A$, the following are equivalent:
\begin{enumerate}
    \item $\mathcal{T}$ is a left modular element of the lattice $\tors A$.
    \item {The bricks in $\mathcal{T}$ forms a successor-closed subquiver of the brick quiver of $A$.}
\end{enumerate}    
\end{proposition}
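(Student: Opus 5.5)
The plan is to transport Theorem \ref{Thm: bijection between LM(L) and successor-closed sets} from the abstract lattice to $L=\tors A$. By Theorem \ref{Thm: properties of tors(A)}(1), $\tors A$ is weakly atomic and completely semidistributive, so that theorem applies. It gives: $\mathcal{T}$ is left modular if and only if $\varphi(\mathcal{T})=\{j\in\JI^c(\tors A)\mid j\subseteq \mathcal{T}\}$ is successor-closed in $Q_{\tors A}$. The entire task is therefore to identify the pair $(Q_{\tors A},\varphi(\mathcal{T}))$ with the pair ($Q^b(A)$, the bricks lying in $\mathcal{T}$).

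First, fix the bijection $\brick A\to\JI^c(\tors A)$ of Theorem \ref{Thm: properties of tors(A)}(4). It sends a brick $B$ to $\mathcal{T}(B)=\Filt\Fac B$, the smallest torsion class containing $B$. Under this bijection $\kappa(\mathcal{T}(B))={}^{\perp}\{B\}$, the largest torsion class not containing $B$. The standard facts behind this (from \cite{DI+,BCZ}) are:
\begin{itemize}
\item $\mathcal{T}(B)_*$ consists of the modules in $\mathcal{T}(B)$ admitting no nonzero map to $B$;
\item ${}^{\perp}B$ is a torsion class whose meet with $\mathcal{T}(B)$ is $\mathcal{T}(B)_*$;
\item any torsion class $\mathcal{U}$ with $\mathcal{U}\cap\mathcal{T}(B)=\mathcal{T}(B)_*$ misses $B$. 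If some $X\in\mathcal{U}$ had a nonzero map $X\to B$, its image would lie in $\mathcal{U}\cap\mathcal{T}(B)$ and map nonzero to $B$, which is impossible. Hence $\mathcal{U}\subseteq{}^\perp B$.
\end{itemize}
I will cite these rather than reprove them.

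Second, compare arrows. For distinct bricks $X,Y$, there is an arrow $\mathcal{T}(X)\to\mathcal{T}(Y)$ in $Q_{\tors A}$ if and only if $\mathcal{T}(X)\not\subseteq {}^\perp Y$, that is, if and only if $\Hom(\mathcal{T}(X),Y)\neq 0$. This is equivalent to $\Hom_A(X,Y)\neq0$. One direction is immediate. For the other, if $\Hom(X,Y)=0$, then $Y\in X^{\perp}$, a torsion-free class, so ${}^{\perp}Y$ is a torsion class containing $X$ and hence containing $\mathcal{T}(X)$. This is exactly the arrow condition of $Q^b(A)$, so the bijection is an isomorphism of quivers $Q^b(A)\cong Q_{\tors A}$.

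Third, compare vertex sets: $\mathcal{T}(B)\subseteq\mathcal{T}$ if and only if $B\in\mathcal{T}$. So $\varphi(\mathcal{T})$ corresponds exactly to the set of bricks in $\mathcal{T}$, and successor-closedness transfers along the isomorphism. Combining this with Theorem \ref{Thm: bijection between LM(L) and successor-closed sets} gives (1)$\Leftrightarrow$(2).

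The main obstacle is the first step, namely the explicit identification $\kappa(\mathcal{T}(B))={}^\perp B$; I will take it from the literature. As an alternative route, I can avoid $\kappa$ entirely by using Theorem \ref{Thm: brick-splitting torsion class characterization}, which reduces left modularity to $\mathcal{T}$ being brick-splitting. Then I show directly that brick-splitting is equivalent to successor-closedness:
\begin{itemize}
\item If $\mathcal{T}$ is brick-splitting, $X\in\mathcal{T}$, and $\Hom(X,Y)\neq0$, then $Y\notin\mathcal{T}^\perp$, so $Y\in\mathcal{T}$.
\item Conversely, suppose a brick $Y$ lies in neither $\mathcal{T}$ nor $\mathcal{T}^\perp$. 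Take the torsion submodule $t(Y)\neq0$, an object of $\mathcal{T}$ whose inclusion into $Y$ is nonzero. Since $t(Y)\in\mathcal{T}$ has a nonzero map to $Y$, it has a brick factor (or filtration factor) in $\mathcal{T}$ with a nonzero map to $Y$. This produces an arrow in $Q^b(A)$ from a brick in $\mathcal{T}$ to $Y\notin\mathcal{T}$, contradicting successor-closedness.
\end{itemize}
The delicate point in this route is extracting that brick in $\mathcal{T}$. I would handle it by using that $\mathcal{T}$ is filtered by bricks (the labels of $\Hasse[0,\mathcal{T}]$), which follows from Theorem \ref{Thm: properties of tors(A)}.
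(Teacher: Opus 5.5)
Your two routes use exactly the two ingredients the paper names. The paper gives no detailed proof: it cites \cite{AI+}, or says the result follows from Theorems \ref{Thm: bijection between LM(L) and successor-closed sets} and \ref{Thm: brick-splitting torsion class characterization}. Your arrow comparison and vertex comparison are correct.

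The one genuinely wrong step is your justification of $\kappa(\mathcal{T}(B))={}^{\perp}B$ in the third bullet. The image of a nonzero map $X\to B$ is a \emph{submodule} of $B$, and $\mathcal{T}(B)=\Filt\Fac B$ is not closed under submodules, so the image need not lie in $\mathcal{U}\cap\mathcal{T}(B)$. Your argument only uses $\mathcal{U}\cap\mathcal{T}(B)\subseteq{}^{\perp}B$, and in that form it proves too much. Take Figure \ref{Fig. for A_2} with $B={\begin{smallmatrix}2\\1\end{smallmatrix}}$. Then $\mathcal{T}(B)=z$ and $\mathcal{T}(B)_*=y={}^{\perp}B$. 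The class $\mathcal{U}=x$ satisfies $\mathcal{U}\cap\mathcal{T}(B)=\hat 0$, yet $\Hom_A({\begin{smallmatrix}1\end{smallmatrix}},B)\neq 0$; the image is the socle ${\begin{smallmatrix}1\end{smallmatrix}}\notin z$. The same example shows that ${}^{\perp}B$ is not ``the largest torsion class not containing $B$'': $x$ and $y$ both miss $B$ and are incomparable. By definition of $\kappa$, ${}^{\perp}B$ is only the largest torsion class meeting $\mathcal{T}(B)$ in $\mathcal{T}(B)_*$.

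The identity $\kappa(\mathcal{T}(B))={}^{\perp}B$ is nevertheless true, so citing \cite{DI+} suffices. A correct argument runs as follows.
\begin{enumerate}
\item Let $\mathcal{V}\supsetneq{}^{\perp}B$ be a torsion class. Then $t_{\mathcal{V}}(B)\neq 0$.
\item Since $B$ is a brick, $\Hom_A(B/t_{\mathcal{V}}(B),B)=0$; a nonzero map would give a non-injective nonzero endomorphism of $B$.
\item Hence $B/t_{\mathcal{V}}(B)\in{}^{\perp}B\cap\mathcal{V}^{\perp}=0$, so $B\in\mathcal{V}$.
\item Now suppose $\mathcal{U}\cap\mathcal{T}(B)=\mathcal{T}(B)\cap{}^{\perp}B$. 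Meet-semidistributivity gives $\mathcal{T}(B)\cap(\mathcal{U}\vee{}^{\perp}B)=\mathcal{T}(B)_*$, so $B\notin\mathcal{U}\vee{}^{\perp}B$.
\item By step 3, $\mathcal{U}\vee{}^{\perp}B={}^{\perp}B$, that is, $\mathcal{U}\subseteq{}^{\perp}B$.
\end{enumerate}

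Two smaller points. For (2)$\Rightarrow$(1), the bijection theorem only tells you that $\bigvee\varphi(\mathcal{T})$ is left modular. You still need $\mathcal{T}=\bigvee\varphi(\mathcal{T})$, which is spatiality of $\tors A$ (Theorem \ref{Thm:RST implications}). In your alternative route, ``$\mathcal{T}$ is filtered by the labels of $\Hasse[0,\mathcal{T}]$'' is not among the listed properties of $\tors A$. The brick is extracted more easily by taking $M\in\mathcal{T}$ of minimal dimension with $\Hom_A(M,Y)\neq 0$. Minimality forces every nonzero map $M\to Y$ to be injective. Using such an injection, it then forces every nonzero endomorphism of $M$ to be surjective. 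So $M$ is a brick in $\mathcal{T}$, and $M\to Y$ is an arrow of $Q^b(A)$ leaving the bricks of $\mathcal{T}$.

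With these repairs both routes are complete.
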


From the preceding proposition, together with our results on extremality and left modularity in Sections \ref{Section: Left modularity and extremality for well-separated kappa-lattices} and \ref{Section: Left modularity and Extremality for weakly atomic lattices}, we obtain the following result.

\begin{proposition}\label{Prop: tors A being left modular, extremal, and a chain}
For an algebras $A$, the following are equivalent:
\begin{enumerate}
    \item $\tors A$ is left modular.
    \item $\tors A$ is extremal. 
    \item There is a maximal chain $\{\mathcal{T}_i\}_{i\in I}$ in $\tors A$ where each $\mathcal{T}_i$ is a brick-splitting.
\end{enumerate}
\end{proposition}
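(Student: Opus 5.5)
The plan is to combine the general lattice-theoretic equivalences with the torsion-theoretic description of left modular elements. First I record that, by Theorem \ref{Thm: properties of tors(A)}(1) and the discussion preceding it, $\tors A$ is a complete, weakly atomic, completely semidistributive lattice. This is exactly the setting of Section \ref{Section: Left modularity and Extremality for weakly atomic lattices}. The equivalence (1)$\Leftrightarrow$(2) is then an immediate application of Theorem \ref{Thm: Equiv left modular & Extremal for weakly atomic completely semidistributive lattices}. Alternatively, since $\tors A$ is also a well-separated $\kappa$-lattice, Theorem \ref{Thm: for well-sep kappa lattice, left modular equi to extremal} gives the same conclusion. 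One point needs care: extremality is meant with respect to $\lambda=\kappa$. Proposition \ref{Prop: extremality equals left modularity} identifies this with the less technical Definition \ref{Defn extremality and modularity}, so either formulation of extremality may be used.

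For (1)$\Leftrightarrow$(3), I would unfold Definition \ref{Def: Left modularity for arbitrary lattices}: $\tors A$ is left modular exactly when some maximal chain $\{\mathcal{T}_i\}_{i\in I}$ consists entirely of left modular elements. By Theorem \ref{Thm: brick-splitting torsion class characterization}, (1)$\Leftrightarrow$(3) there, a torsion class is left modular in $\tors A$ if and only if it is brick-splitting. Substituting this elementwise along the chain turns a maximal chain of left modular elements into a maximal chain of brick-splitting torsion classes, and conversely. This gives (1)$\Leftrightarrow$(3) directly.

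As a cross-check, or as an alternative route avoiding \cite{AI+}, Proposition \ref{Prop: brick-splitting, left modular, successor-closed} together with Theorem \ref{Thm: bijection between LM(L) and successor-closed sets} also identifies left modular torsion classes with successor-closed sets of bricks. This route would additionally require matching the labelling quiver $Q_{\tors A}$ with the brick quiver $Q^b(A)$ under the bijection of Theorem \ref{Thm: properties of tors(A)}(4). That matching is the only potentially delicate step. Concretely, one must show that $B\not\leq\kappa(B')$ holds, where $\kappa(B')$ is the torsion class ${}^\perp B'$, exactly when $\Hom_A(B,B')\neq 0$. I would avoid this step by relying on Theorem \ref{Thm: brick-splitting torsion class characterization}, so I expect no genuine obstacle: the proof should be a short assembly of the cited results.
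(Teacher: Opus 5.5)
Your proposal is correct and follows essentially the same route as the paper: (1)$\Leftrightarrow$(2) comes from applying Theorem~\ref{Thm: Equiv left modular & Extremal for weakly atomic completely semidistributive lattices} (or alternatively Theorem~\ref{Thm: for well-sep kappa lattice, left modular equi to extremal}) to $\tors A$. (1)$\Leftrightarrow$(3) comes from substituting, along a maximal chain, the characterization in Theorem~\ref{Thm: brick-splitting torsion class characterization} that left modular torsion classes are exactly the brick-splitting ones. Your explicit remark that extremality is taken with $\lambda=\kappa$ is extra care that the paper leaves to its standing convention.
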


\begin{proof}
By Theorems \ref{Thm:RST implications} and \ref{Thm: properties of tors(A)}, $\tors A$ is a complete lattice that is weakly atomic completely semidistributive and well-sparated $\kappa$-lattice. That being the case, the equivalence (1)$\Leftrightarrow$(2) follows from Theorem \ref{Thm: for well-sep kappa lattice, left modular equi to extremal}, or alternatively from Theorem \ref{Thm: Equiv left modular & Extremal for weakly atomic completely semidistributive lattices}. 

Finally, (1)$\Leftrightarrow$(3)
follows from the equivalence between the notions of brick-splitting and left modular torsion classes, from Theorem \ref{Thm: brick-splitting torsion class characterization}.
\end{proof}

Now we give a characterization of extremality and left modularity of $\tors A$ in terms of brick-directed algebras. In particular, we prove Corollary \ref{Cor: left modularity and brick-directed algs}.
Recall that $X_0\xrightarrow{f_1} X_1\xrightarrow{f_2}  \cdots \xrightarrow{f_{m-1}}  X_{m-1}\xrightarrow{f_m}  X_m=X_0$ in $\modu A$ is called a \emph{brick-cycle} if $X_i \in \brick A$, for all $0\leq i\leq m$, and each $f_i$ is a nonzero and non-invertible morphism. Then, $A$ is called \emph{brick-directed} if there exists no brick-cycle in $\modu A$. 

\begin{corollary}
\label{Cor: brick-directed algs and left modularity=extremality}
For an algebra $A$, the following are equivalent:
\begin{enumerate}
    \item $A$ is brick-directed;
    \item $\tors A$ is a left modular lattice;
    \item $\tors A$ is an extremal lattice;
\item $Q^b(A)$ is acyclic.
\end{enumerate}
\end{corollary}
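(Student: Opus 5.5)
The plan is to assemble the four equivalences from results already stated, routing through condition (3) of Proposition \ref{Prop: tors A being left modular, extremal, and a chain}.

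First, (2)$\Leftrightarrow$(3) is exactly the equivalence (1)$\Leftrightarrow$(2) of Proposition \ref{Prop: tors A being left modular, extremal, and a chain}. That equivalence in turn rests on $\tors A$ being a weakly atomic, completely semidistributive (indeed well-separated $\kappa$-) lattice, by Theorem \ref{Thm: properties of tors(A)}, together with Theorem \ref{Thm: for well-sep kappa lattice, left modular equi to extremal} or Theorem \ref{Thm: Equiv left modular & Extremal for weakly atomic completely semidistributive lattices}.

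Next, Proposition \ref{Prop: tors A being left modular, extremal, and a chain} also shows that $\tors A$ is left modular if and only if there is a maximal chain of brick-splitting torsion classes. By Theorem \ref{Thm: Characterization of brick-directed via maximal chain of brick-splitting pairs}, the existence of such a chain is equivalent both to $A$ being brick-directed and to $Q^b(A)$ being acyclic. This gives (1)$\Leftrightarrow$(2)$\Leftrightarrow$(4), which completes the proof by citation.

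If one prefers a self-contained route to (4) within the lattice-theoretic framework, I would compare $Q^b(A)$ with the labelling quiver $Q_{\tors A}$. Under the bijection $\brick A\cong\JI^c(\tors A)$, sending $B$ to $\mathcal{T}(B)$, one has $\kappa(\mathcal{T}(B))={}^{\perp}B$. The arrow condition $\mathcal{T}(X)\not\subseteq{}^{\perp}Y$ then becomes $\Hom_A(X,Y)\neq 0$, because $\Hom(\mathcal{T}(X),Y)\ne0$ if and only if $\Hom(X,Y)\ne0$, using closure under factors and extensions. Hence $Q_{\tors A}=Q^b(A)$. Corollary \ref{Cor: Q_L being acyclic} then gives (3)$\Rightarrow$(4).

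For the converse (4)$\Rightarrow$(2), I would build a linear extension of the acyclic order, take joins of its downsets, and argue as in Theorem \ref{Thm: labelling quiver, linear extensions and extremal chains}. The subtlety is that that theorem assumes extremality, so one must check directly that the resulting chain is maximal. Each $\bigvee D$ is left modular by Theorem \ref{Thm: bijection between LM(L) and successor-closed sets}. Maximality should follow from weak atomicity: any gap in the chain would have to be labelled by a brick lying strictly between two consecutive downsets, which is impossible because the chain of downsets of a total order is complete.

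The main obstacle is this maximality check. Since Theorem \ref{Thm: Characterization of brick-directed via maximal chain of brick-splitting pairs} already supplies (1)$\Leftrightarrow$(4), I would rely on citation there and keep the argument short.
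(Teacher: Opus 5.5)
Your main route is correct and is the paper's proof. The paper cites \cite[Theorem 4.1]{AI+}, recorded here as Theorem \ref{Thm: Characterization of brick-directed via maximal chain of brick-splitting pairs}, to show that brick-directedness, acyclicity of $Q^b(A)$ and the existence of a maximal chain of brick-splitting torsion classes are equivalent, and then combines this with Proposition \ref{Prop: tors A being left modular, extremal, and a chain}, exactly as you do. Your optional detour is not needed: identifying $Q_{\tors A}$ with $Q^b(A)$ via $\kappa(\mathcal{T}(B))={}^{\perp}B$ is sound, but the maximality check for $(4)\Rightarrow(2)$ is only sketched and would also need spatiality and the complete-sublattice property of $\LM(L)$ to be made rigorous.
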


\begin{proof}
By Theorem \cite[Theorem 4.1]{AI+}, algebra $A$ is brick-directed if and only if $Q^b(A)$ is acyclic, if and only if there exists a maximal chain $\{\mathcal{T}_i\}_{i\in I}$ in $\tors A$ consisting of brick-splitting torsion classes. The aforementioned theorem, together with Proposition \ref{Prop: tors A being left modular, extremal, and a chain}, implies the set of desired equivalences.
\end{proof}

From the above characterizations, for finite acyclic quivers, and the Weyl groups of simply laced Dynkin diagrams, we conclude the following result, which proves Corollary \ref{Cor: On tors kQ and Weyl group in Introduction}. We remark that the first part of the next corollary already appeared in our earlier work \cite[Proposition 4.9]{AI+}. However, in that paper, in the absence of a precise lattice-theoretical language for the left modularity and extremality of infinite lattice, this result was formulated in terms of brick-directed algebras.

\begin{corollary}\label{Cor: On tors kQ and Weyl group}
Let $Q$ be a connected acyclic quiver. Then
\begin{enumerate}
    \item The lattice of torsion classes $\tors kQ$ is left modular if and only if $Q$ is a Dynkin quiver or $Q$ is the Kronecker quiver.
    \item For a Dynkin quiver $Q$ and the associated Weyl group lattice  $(W_Q,\leq)$ with respect to the weak order, $\hat{0}$ and $\hat{1}$ are the only left modular elements.
\end{enumerate}
\end{corollary}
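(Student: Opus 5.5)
Part (1) is immediate from Corollary \ref{Cor: brick-directed algs and left modularity=extremality} combined with the cited classification \cite[Prop. 3.10 \& 3.11 \& 4.9]{AI+}. By that corollary, $\tors kQ$ is left modular if and only if $kQ$ is brick-directed. By the proposition recalled in Subsection \ref{Subsection:Brick-directed Algebras}, a connected path algebra $kQ$ is brick-directed exactly when $Q$ is Dynkin or the Kronecker quiver. So nothing new has to be proved here; I would simply chain the two equivalences.

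For part (2), I would pass through the dictionary of \cite{IT} and \cite{Mi}. For a Dynkin quiver $Q$, the lattice $\tors \Pi_Q$ of the preprojective algebra is isomorphic to the Weyl group $(W_Q,\leq)$ with the weak order. Left modularity is a purely lattice-theoretic property, so it is preserved under this isomorphism. It therefore suffices to show that the only left modular elements of $\tors \Pi_Q$ are $0$ and $\modu \Pi_Q$.

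By Theorem \ref{Thm: brick-splitting torsion class characterization}, a torsion class is left modular if and only if it is brick-splitting. By Theorem \ref{Thm: bijection between LM(L) and successor-closed sets} and Proposition \ref{Prop: brick-splitting, left modular, successor-closed}, this is further equivalent to the set of bricks in $\mathcal{T}$ being successor-closed in the brick quiver $Q^b(\Pi_Q)$. The claim therefore reduces to showing that $Q^b(\Pi_Q)$ has no successor-closed subsets other than $\emptyset$ and the whole vertex set.

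Since $\Pi_Q$ is finite-dimensional self-injective, hence brick-finite for Dynkin $Q$, I would invoke \cite[Cor. 3.16]{AI+}. I expect this result to state that a connected non-local self-injective algebra has only the trivial brick-splitting torsion pairs, equivalently that its brick quiver is strongly connected.

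The main obstacle is confirming that this strong connectivity applies to $\Pi_Q$. This is straightforward once $\Pi_Q$ is connected and non-semisimple for $Q$ connected with at least two vertices. For $Q=A_1$, we have $\Pi_Q=k$ and $W_Q$ is a two-element chain, so the statement holds trivially.

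If \cite[Cor. 3.16]{AI+} turns out not to be phrased this way, I would argue directly. Any nonzero proper brick-splitting $\mathcal{T}$ contains a simple $S_i$ but misses some simple $S_j$. Because $\Pi_Q$ is self-injective and connected, there is a chain of simples $S_i=S_{i_0},\dots,S_{i_r}=S_j$ with arrows in both directions between neighbours. The indecomposable length-two modules with top $S_a$ and socle $S_b$, and the reverse ones, are bricks. These give paths $S_a\to M\to S_b$ and $S_b\to M'\to S_a$ in $Q^b(\Pi_Q)$, so the simples all lie in one strongly connected component. Every brick admits a nonzero map from some simple and to some simple (its socle), so each brick also lies in that component. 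Hence every successor-closed set is either empty or contains all bricks, and this forces $\mathcal{T}$ to be $0$ or $\modu\Pi_Q$.
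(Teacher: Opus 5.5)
Your proposal is correct and follows essentially the same route as the paper. Part (1) chains Corollary \ref{Cor: brick-directed algs and left modularity=extremality} with \cite[Prop.~4.9]{AI+}, and part (2) transports the question along Mizuno's isomorphism $\tors \Pi(Q)\cong (W_Q,\leq)$, then combines Theorem \ref{Thm: brick-splitting torsion class characterization} with the fact that a connected self-injective algebra has only the trivial brick-splitting torsion pairs; the paper cites this as \cite[Corollary 1.3]{AI+}, and the non-local hypothesis you anticipated is unnecessary.

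Your fallback argument also works as a self-contained substitute for that citation: the length-two bricks along the arrows $a$ and $a^*$ of the double quiver make the brick quiver of $\Pi_Q$ strongly connected. Two small points in it: the two-way arrows between neighbouring vertices come from the double-quiver structure of $\Pi_Q$, not from self-injectivity; and brick-finiteness does not follow from self-injectivity, though it is never actually needed.
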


\begin{proof}
By \cite[Prop. 4.9]{AI+}, $kQ$ is brick-directed if and only if $Q$ is Dynkin or the Kronecker quiver. This, together with Corollary \ref{Cor: brick-directed algs and left modularity=extremality}, implies (1).

To prove (2), we first note that, from \cite{Mi}, it is known that $(W_Q,\leq)$ with respect to the weak order is isomorphis to $(\tors \Pi(Q), \subseteq)$, where $\Pi(Q)$ denotes the preprojective algebra of $Q$. Furthermore, $\Pi(Q)$ is known to be a self-injective algebra. 
On the other hand, by \cite[Corollary 1.3]{AI+}, if $A$ is a connected self-injective algebra, then the only brick-splitting torsion pairs in $\modu A$ are $(0,\modu A)$ and $(\modu A, 0)$. From these, together with Theorem \ref{Thm: brick-splitting torsion class characterization}, we immediately conclude that $\tors \Pi(Q)$ admits only trivial left modular elements. 
Consequently, the lattice isomorphism between $(\tors \Pi(Q), \subseteq)$ and $(W_Q,\leq)$ implies the desired result.
\end{proof}

Before we state our next result, let us note that in Corollary \ref{Cor: On tors kQ and Weyl group} and its proof, if $Q$ is a non-Dynkin quiver, the corresponding preprojective algebra is not finite dimensional, and hence our results do not apply directly to $\Pi(Q)$ and the Weyl group $(W_Q,\leq)$.

\medskip

Our realizations of the left modularity and extremality in the representation-theoretical context, together with our construction of brick-directed algebras in \cite[Section 4.3]{AI+}, allow us to give an abundance of explicit examples of left modular and extremal lattices of arbitrary length, for any cardinality $\aleph$. 

\medskip

Before we state the next result, let us recall that for every infinite cardinal $\aleph$, there exists an algebraically closed field $k$ of cardinality exactly $\aleph$. 
More specifically, starting from every infinite cardinal $\aleph$ and every characteristic ($0$ or prime $p$), by the ``Existence and Uniqueness of Algebraic Closures", there exists an algebraically closed field of cardinality exactly $\aleph$. 
In fact, if $\aleph=\aleph_0$, the algebraic closures $\overline{\mathbb{Q}}$ and $\overline{\mathbb{F}_p}$ are of cardinality $\aleph_0$, and, respectively of characteristic $0$ and prime $p$. 
And if $\aleph>\aleph_0$, starting from the prime subfield of the characteristic $0$ or $p$ (respectively, $k_0=\mathbb{Q}$ and $k_0=\mathbb{F}_p$), one considers the purely transcendental extension $k:=k_0(x_{\alpha} : \alpha < \aleph)$, by adjoining $\aleph$ independent indeterminates $x_{\alpha}$'s. Conseuqnelty, the filed $k$ and its algebraic closure are of cardinality $\aleph$ (for more details, see \cite[Chapter 13.4]{DF})

\medskip

Now we can verify Corollary \ref{Cor: left modular and extremal lattices of big cardinality in Introduction}.

\newpage

\begin{corollary} \label{Cor: left modular and extremal lattices of big cardinality}
Let $m$ denote a positive integer, and $\aleph$ an infinite cardinality.
\begin{enumerate}
    \item For every $m>4$, there exist infinitely many connected algebras $A$ for which $\tors A$ is a left modular (equivalently, extremal) lattice of length $m$.
    \item For every $\aleph$, there exists a connected algebra $A$ of rank $2$ for which $\tors A$ is left modular and extremal lattice with $2^{\aleph}$ many maximal chain of size ${\aleph}$ consisting of left modular elements.

    \item For every $\aleph$, there exists an extremal lattice for which $\LM(L)$ is of size $2^{\aleph}$. 
\end{enumerate}
\end{corollary}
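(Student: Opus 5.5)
The overall strategy is to reduce every part to Corollary \ref{Cor: brick-directed algs and left modularity=extremality}, which makes $\tors A$ left modular and extremal for brick-directed $A$. The work is then to exhibit brick-directed algebras with the required size features, using the constructions of \cite[Section 4.3]{AI+}. In each case the needed lattice-theoretic information is read off from the brick quiver $Q^b(A)$, through Theorem \ref{Thm: bijection between LM(L) and successor-closed sets} and Theorem \ref{Thm: labelling quiver, linear extensions and extremal chains}. Under the brick labelling of Theorem \ref{Thm: properties of tors(A)}, the quiver $Q^b(A)$ is identified with the labelling quiver $Q_{\tors A}$ (compare Proposition \ref{Prop: brick-splitting, left modular, successor-closed}).

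For (1), I would take brick-finite brick-directed algebras. Since extremality gives a maximal chain on which each completely join-irreducible labels exactly one cover, the length of $\tors A$ equals $|\brick A|$. So it suffices to produce, for each $m>4$, infinitely many pairwise non-isomorphic connected brick-directed algebras with exactly $m$ bricks. The plan is to use the families of \cite[Section 4.3]{AI+}: for instance, representation-directed algebras with $m$ indecomposables, all bricks, varying over infinitely many base fields or infinitely many non-isomorphic bound quivers. Alternatively, one can glue along a fixed number of bricks, so that the count $m$ is preserved while the algebras stay distinct. Finding infinitely many such algebras for each $m$ (rather than just one) is where I expect to lean most on \cite{AI+}; varying the field $k$ is the cheapest way to get infinitude.

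For (2), I would take $A$ to be the Kronecker algebra over an algebraically closed field $k$ with $|k|=\aleph$. It has rank $2$, and it is brick-directed by \cite[Prop. 4.9]{AI+}.
\begin{enumerate}
\item The bricks are the preprojectives, the preinjectives, and the tubes' quasi-simples $R_\lambda$ for $\lambda\in\mathbb{P}^1(k)$.
\item The quiver $Q^b(A)$ has arrows going from preprojectives to regulars to preinjectives, and no arrows between distinct $R_\lambda$.
\item By Theorem \ref{Thm: labelling quiver, linear extensions and extremal chains}, maximal chains of left modular elements correspond to linear extensions of $\preceq$. These consist of the fixed order on the preprojectives, followed by an arbitrary total order on $\mathbb{P}^1(k)$, followed by the order on the preinjectives.
\end{enumerate}
The number of total orders on a set of size $\aleph$ is $2^{\aleph}$. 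Each such chain has cardinality $\aleph$, since it has one cover per brick and there are $\aleph$ bricks, while completeness adds only limit elements between them.

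For (3), the same algebra works. By Theorem \ref{Thm: bijection between LM(L) and successor-closed sets}, $\LM(\tors A)$ is in bijection with ${\rm succ}(Q^b(A))$. The set consisting of all preinjectives together with any subset $U\subseteq\mathbb{P}^1(k)$ of regular simples is successor-closed, because regulars have arrows only to preinjectives. This gives at least $2^{\aleph}$ left modular elements. Conversely, $|\mathcal{P}(\brick A)|=2^{\aleph}$ bounds the count from above, so $|\LM(L)|=2^{\aleph}$. The main point to check carefully is the description of arrows of $Q^b(A)$ for the Kronecker algebra, namely that $\Hom(R_\lambda,R_\mu)=0$ for $\lambda\neq\mu$ and that there are no arrows back from regulars to preprojectives. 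Both are standard facts about tame hereditary algebras.
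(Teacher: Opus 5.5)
The real gap is in part (1). You never actually produce the algebras, and neither of your two routes to infinitude works once the ground field $k$ is fixed, which is how the statement is meant. Varying $k$ is a degenerate way out: the same bound quiver over different fields gives isomorphic lattices $\tors A$, so ``infinitely many'' would carry no content. Over a fixed $k$, the representation-directed algebras with exactly $m$ indecomposables form, up to isomorphism, a finite list, since such an algebra is standard and hence recovered from its AR quiver, which has $m$ vertices. So infinitely many examples with exactly $m$ bricks must come from algebras that have non-brick indecomposables. Your phrase ``gluing along a fixed number of bricks'' only helps once such an ingredient is named.

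The missing idea, which is what the paper uses, runs as follows. The gluing of \cite[Section 4.3]{AI+} turns brick-directed algebras with $m_1$ and $m_2$ bricks into a brick-directed algebra with $m_1+m_2-1$ bricks. Starting from $kA_2$ ($3$ bricks), $kQ/\langle\beta\alpha\rangle$ with $Q\colon 1\xrightarrow{\alpha}2\xrightarrow{\beta}3$ ($5$ bricks), and $kA_3$ ($6$ bricks), this yields some $\Lambda_m$ for every $m>4$. Gluing a fixed $\Lambda_m$ to the local algebras $L_n=k[x_1,\dots,x_n]/\langle x_ix_j\rangle$, whose only brick is the simple module, then gives pairwise non-isomorphic brick-directed algebras $\Lambda_m(L_n)$, each with exactly $m$ bricks.

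In part (2), your count of linear extensions via Theorem \ref{Thm: labelling quiver, linear extensions and extremal chains} is right. However, the claim that every resulting maximal chain has cardinality $\aleph$ is false. By Theorem \ref{Thm: bijection between LM(L) and successor-closed sets}, the map $D\mapsto\bigvee D$ is injective, so the chain has as many elements as the linear extension has downsets. A dense order has far more cuts than points: for $\aleph=\aleph_0$, ordering $\mathbb{P}^1(k)$ like $\mathbb{Q}$ gives a maximal chain with $2^{\aleph_0}$ elements.

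The repair is to use only well-orders on $\mathbb{P}^1(k)$. There are still $2^{\aleph}$ of them: compose one fixed well-order with the $2^{\aleph}$ permutations of $\mathbb{P}^1(k)$, and distinct permutations give distinct orders because well-orders have no nontrivial automorphisms. Each such order has only $\aleph$ initial segments, and the preinjective and preprojective parts add only countably many downsets. So these chains have exactly $\aleph$ elements.

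Your part (3) is correct as written. Counting successor-closed sets (all preinjectives together with an arbitrary $U\subseteq\mathbb{P}^1(k)$) is the right argument. It is more accurate than the paper's wording, which speaks of a bijection between $\brick kQ$ and the left modular elements and, read literally, would give only $\aleph$ of them.
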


\begin{proof}
We first note that the gluing construction in \cite[Section 4.3]{AI+} allows us to start from two brick-directed algebras $\Lambda_{m_1}$ and $\Lambda_{m_2}$, where $\tors \Lambda_{m_1}$ and $\tors \Lambda_{m_2}$ are respectively of length $m_1$ and $m_2$, and then obtain a new brick-directed algebra $A$ where $\tors A$ is of length $m_1+m_2-1$. 
In particular, since $A$ is brick-directed, Corollary \ref{Cor: brick-directed algs and left modularity=extremality} implies that $\tors A$ is left modular and extremal, and furthermore it is of length $|\brick A|=|\brick \Lambda_{m_1}|+|\brick \Lambda_{m_2}|-1$. 

Second, we consider the local algebra $L_n= k[x_1,\cdots, x_n]/\langle x_ix_j : 1\leq i\leq j \leq n \rangle$, whose bound quiver consists of $n$ loops that start and end at the same vertex, and it is subject to all quadratic relations. Note that $L_1$ is representation-finite, $L_2$ is (representation-infinite) tame, and $L_n$ is wild for all $n>2$. Moreover, $\brick L_n$ consists of the simple module associated to the unique vertex. Hence, $L_n$ is trivially brick-finite and brick-directed. In particular, gluing $L_n$ to any nontrivial brick-directed algebra $\Lambda$ results in a new (non-isomorphic) brick-directed algebra $\Lambda({L_n})$, where $\brick \Lambda(L_n)=\brick \Lambda$.  

\medskip

For (1), to employ the above construction, we give a list of explicit brick-directed algebras $\Lambda_{m_i}$ for which $\tors \Lambda_{m_i}$ is of length ${m_i}$ (equivalently, $|\brick \Lambda_{m_i}|=m_i$). Then, we obtain the desired algebra $A$ from the gluing of two algebras $\Lambda_{m_i}$ and $\Lambda_{m_j}$, for some ${m_i}$ and ${m_j}$. 
As observe below, we only need to specify three algebras $\Lambda_3$, $\Lambda_{5}$ and $\Lambda_6$, that is, three connected brick-directed algebras algebras, respectively with $3$, $5$, and $6$ bricks.  
Then, for every other $m>4$, we use the gluing construction to obtain a brick-directed algebra $A$ with $|\brick A|=m$, thus by Corollary \ref{Cor: brick-directed algs and left modularity=extremality}, $\tors kA$ is a left modular and extremal lattice of length $m$.

By Corollary \ref{Cor: On tors kQ and Weyl group}, for any quiver $A_n$, the path algebra $kA_n$ is brick-directed; thus $\tors kA_n$ is left modular and extremal. 
Also, length of $\tors kA_n$ is the same as the size of $\brick kA_n$, which is $n(n+1)/2$. 
In particular, for $m=3$, consider $\Lambda_3:=kA_2$, and the lattice of $\tors \Lambda_3$, which is of length $3$ (see Figure \ref{Fig. for A_2}). Similarly, for $m=6$, we consider $\Lambda_6:=kA_3$, and $\tors \Lambda_6$. 
In contrast, for $m=5$ we no longer have a (connected) path algebra; instead we consider the algebra $\Lambda_5=kQ/I$, where $Q: 1\xrightarrow{\alpha} 2\xrightarrow{\beta} 3$, and $I=\langle \beta\alpha \rangle$. 
Because $\Lambda_5$ is a quotient of $kA_3$, it is again brick-directed (see \cite[Corollary 1.8]{AI+}). In particular, $\ind A= \brick A =\{{\begin{smallmatrix} 1\end{smallmatrix}}, {\begin{smallmatrix} 2\end{smallmatrix}}, {\begin{smallmatrix} 3\end{smallmatrix}}, {\begin{smallmatrix} 1\\2\end{smallmatrix}}, {\begin{smallmatrix} 2\\3\end{smallmatrix}} \}$, and $\tors \Lambda_5$ is left modular of length $5$.

With the three explicit brick-directed algebras $\Lambda_3$, $\Lambda_5$, and $\Lambda_6$ in hand, we can apply the gluing construction to generate a brick-directed algebra $\Lambda_m$, for any $m>4$. For instance, for $m=7$, the brick-directed algebra $\Lambda_7$ is obtain from gluing $\Lambda_3$ and $\Lambda_5$, which admits $7=3+5-1$ bricks, hence $\tors \Lambda_7$ is left modular and extremal of length $7$. 
Furthermore, we can glue the brick-directed algebra $\Lambda_m$ to the local algebra $L_n$, to obtain $\Lambda_m(L_n)$. Note that, for every $n \in \mathbb{Z}_{>0}$, we have a new brick-directed algebra $\Lambda_m(L_n)$ with exactly $m$ bricks. 

\medskip

For (2), consider the Kronecker quiver $Q$ and the associated path algebra $A=kQ$. Obviously, $A$ is of rank $2$ and, by Corollary \ref{Cor: On tors kQ and Weyl group}, $A$ is brick-directed, hence $\tors A$ is left modular and extremal (see Corolarry \ref{Cor: brick-directed algs and left modularity=extremality}). It is known that $\brick A$ contains an infinite family of pairwise Hom-orthogonal modules $\{B_{\lambda}\}_{\lambda \in \mathbb{P}^1(k)}$, where $\dim_k(B_{\lambda
})=2$. 
Moreover, as remarked before, for every infinite cardinal $\aleph$, there exists an algebraically closed field $k$ of cardinality exactly $\aleph$. Hence, there exist $2^{\kappa}=\kappa^{\kappa}=\kappa !$ partial order that we can consider on $\{B_{\lambda}\}_{\lambda \in \mathbb{P}^1(k)}$. Using these partial order and constructing a chain of brick-splitting torsion classes, we then add the preinjective and preprojective bricks, and (by Zorn's Lemma) extend that to total order on $\brick A$, to obtain $2^{\kappa}$ chains of brick-splitting torsion classes. Each of these maximal chains in $\tors A$ consists of brick-splitting torsion classes.

\medskip

Part (3) follows from the proof of (2). More specifically, for the Kronecker algebra $A=kQ$ considered in part (2), by Theorem \ref{Thm: properties of tors(A)} the lattice $L=\tors A$ satisfies all the assumptions of Prop. \ref{Prop: LM(L) is distributive sublattice}. 
Thanks to the Auslander-Reiten quiver $\Gamma_{kQ}$ of $\modu kQ$, we can explicitly describe the set $\brick kQ = \{B_{p}\}_{p \in \mathcal{P}} \cupdot\{B_{\lambda}\}_{\lambda \in \mathbb{P}^1(k)} \cupdot \{B_{i}\}_{i \in \mathcal{I}}$, where $B_p$, $B_\lambda$, and $B_i$ respectively denote the bricks belonging to the preprojective component $\mathcal{P}$, quasi-simples on the mouth of tubes, and those in the preinjective component $\mathcal{I}$. We further observe that $\mathcal{P}$ and $\mathcal{I}$ are countable infinite sets, thus of size $\aleph_0$. Consequently, $\brick kQ$ is of size $\aleph$. 
Also, note that elements of $\brick kQ$ are in bijection with the left modular elements in $\tors kQ$, which are the brick-splitting torsion classes in $\tors kQ$; see Theorem \ref{Thm: brick-splitting torsion class characterization}. Hence, the assertion follows.
\end{proof}

In the following remark, we collect some observations to supplement the construction in the previous proof.

\begin{remark}
With regard to Corollary \ref{Cor: left modular and extremal lattices of big cardinality}, we note that
\begin{enumerate}
    \item The algebra constructed in the proof of part (1) is in fact a very simple gentle algebra, obtained as a sequence of gluing of quivers of type $A_n$. We also note that $\Lambda_5$ could be obtained from the gluing of two copies of $\Lambda_3$, but we explicitly constructed that for the sake of clarity. 
    It is notable that, for any such gentle algebra $A$ constructed in the proof, computation of $\tors A$ can be conducted in a very combinatorial way, as described in \cite{BD+}, and there exist Applets which compute the lattice $\tors A$ with the labelling of the cover relations by bricks (see \cite{En, Ge}).
    
    \item For the path algebra $kA_1$, we have $\tors kA_1$ is of length $1$. However, there is no (connected) brick-directed algebra $A$ for which $\tors A$ is of length $2$ or $4$. 
    Using the notation of the previous proof, we assume that $\Lambda_m$ denotes a brick-directed algebra with $|\brick \Lambda_m|=m$. 
    Now, observe that, for connected algebras, the ``no gap" phenomenon in the existence of $\Lambda_m$ begins from $m=5$. 
    Obviously, if we include disconnected algebras, then we have an algebra for any positive integer $m$.
    \end{enumerate}
\end{remark}

To put our lattice-theoretical results into a broader perspective and highlight their connections with some fundamental phenomena treated in other active areas of research, we remark on some aspects of Corollary \ref{Cor: left modular and extremal lattices of big cardinality} and its proof, in connection with some open and challenging conjectures in modern representation theory.
 
\begin{remark}\label{Remark-Question: on the infinite cardinal}
We note that Corollary \ref{Cor: left modular and extremal lattices of big cardinality} closely relates to the study of infinite \emph{semibricks}; that is, an infinite set of pairwise Hom-orthogonal bricks in $\brick A$. To state that more accurately, we make the following observations:

\begin{enumerate}
    \item In the proof of part (2), we use the well-known structure $\tors kQ$, for the Kronecker quiver $Q$. However, we can alternatively use the generalized barbell algebra of rank $2$, which is also a gentle algebra; see \cite[Section 1]{Mo}. 
    In fact, a decisive point in the proof is the existence of an infinite semibrick, which is known to always exist for any brick-infinite algebra which is hereditary or biserial; see \cite{MP1}. Another key component of the proof is the brick-directedness, which guarantees (and is equivalent to) left modularity of $\tors A$. As noted in Section \ref{Subsection:Brick-directed Algebras}, brick-directed algebras conceptually generalize classical representation-directed algebras. It is known that for any rep-directed algebra $A$, the lattice $\tors A$ is left modular and extremal, but it is always  a finite lattice; \cite[Corollary 1.5]{TW}. 
    However, unlike rep-directed algebras, which are always of rep-finite type, brick-directed algebras include various types of algebras, of tame and wild types; \cite[Corollary 1.6]{AI+}.

    \item In connection with some important open questions in representation theory, we formulate the following problem in a more lattice-theoretical setting.

    \textbf{Question}: Let $k$ be an algebraically closed field of cardinality $\aleph$, and assume that $A$ is a brick-directed $k$-algebra. Are the following equivalent?
    \begin{enumerate}
        \item [(I)] $A$ is a brick-infinite algebra. 
        \item [(II)] There exist $2^{\aleph}$ maximal chains in $\tors A$, each of size ${\aleph}$ and consisting of left modular elements.
    \end{enumerate}
    
\medskip
Note that (II) $\Rightarrow$ (I) follows from the definition, but (I) $\Rightarrow$ (II) is unknown to us. 
It is notable that this open implication closely relates to the Strong Semibrick Conjecture; see \cite[Remark 3.5]{MP2}. 
Although this conjecture is still open in full generality, it is known to hold for all hereditary and all biserial algebras (and some other families). Hence, for these families, we can give an affirmative answer to the above questions. For several closely related open conjectures on bricks, see \cite{MP2} and references therein. 
\end{enumerate}
\end{remark}

\medskip

\textbf{Acknowledgments.} We thank Hugh Thomas, particularly for sharing some ideas on the content of Section \ref{Section: Left modularity and extremality for well-separated kappa-lattices}. KM also thanks Adrien Segovia for some interesting discussions. 
SA was supported by Early-Career Scientist JSPS Kakenhi grant number 23K12957. 
OI was supported by JSPS Grant-in-Aid for Scientific Research (B) 22H01113, (B) 23K22384.
KM was supported by Early-Career Scientist JSPS Kakenhi grant number 24K16908. CP was supported by the Natural Sciences and Engineering Research Council of Canada (RGPIN-2026-05988) and by the Canadian Defence Academy Research Programme. 
Part of this project developed while SA, OI, and KM were attending the thematic program TDA PARTI: {\small{Topological Data Analysis, Persistence And Representation Theory Intertwined}}, hosted by the OIST's Visiting Program, at Okinawa Institute of Science and Technology.

\end{document}